\newcommand{\algorithmicbreak}{\textbf{break}}
\newcommand{\BREAK}{\STATE \algorithmicbreak}
  \pgfplotsset{compat=newest}
\newtheorem{assumption}{Assumption}
\newtheorem{proposition}{Proposition}
\newtheorem{remark}{Remark}
\renewcommand\nomgroup[1]{%
  \item[\bfseries
  \ifstrequal{#1}{A}{Decision variables of the TNC platform}{%
  \ifstrequal{#1}{B}{Decision variables of the transit agency}{%
  \ifstrequal{#1}{C}{Endogenous variables}{%
  \ifstrequal{#1}{D}{Auxiliary variables in the solution method}{%
  \ifstrequal{#1}{E}{Exogenous parameters}{}}}}}%
]}
\begin{document}
\begin{frontmatter}
 \title{\textbf{Regulating For-Hire Autonomous Vehicles for An Equitable Multimodal Transportation Network}}

\author[1staddress]{Jing Gao}
\ead{jgaoax@connect.ust.hk}
\author[1staddress,2ndaddress]{Sen Li}
\ead{cesli@ust.hk}

\address[1staddress]{Department of Civil and Environmental Engineering, The Hong Kong University of Science and Technology, Hong Kong, China}
\address[2ndaddress]{Intelligent Transportation Thrust, Systems Hub, The Hong Kong University of Science and Technology (Guangzhou), Guangzhou, China}

\begin{abstract}
This paper assesses the equity impacts of for-hire autonomous vehicles (AVs) and investigates regulatory policies that promote spatial and social equity in future autonomous mobility ecosystems. To this end, we consider a multimodal transportation network, where a ride-hailing platform operates a fleet of AVs to offer mobility-on-demand services in competition with a public transit agency that offers transit services on a transportation network. A game-theoretic model is developed to characterize the intimate interactions between the ride-hailing platform, the transit agency, and multiclass passengers with distinct income levels. An algorithm is proposed to compute the Nash equilibrium of the game and conduct an ex-post evaluation of the performance of the obtained solution. Based on the proposed framework, we evaluate the spatial and social equity in transport accessibility using the Theil index, and find that although the proliferation of for-hire AVs in the ride-hailing network improves overall accessibility, the benefits are not fairly distributed among distinct locations or population groups, implying that the deployment of AVs will enlarge the existing spatial and social inequity gaps in the transportation network if no regulatory intervention is in place. To address this concern, we investigate two regulatory policies that can improve transport equity: (a) a minimum service-level requirement on ride-hailing services, which improves the spatial equity in the transport network; (b) a subsidy on transit services by taxing ride-hailing services, which promotes the use of public transit and improves the spatial and social equity of the transport network. We show that the minimum service-level requirement entails a trade-off: as a higher minimum service level is imposed, the spatial inequity reduces, but the social inequity will be exacerbated. On the other hand, subsidies on transit services improve accessibility for low-income households in underserved areas. In certain regimes, the subsidy increases public transit ridership and simultaneously bridges spatial and social inequity gaps. These results are validated through realistic numerical studies for San Francisco.
\end{abstract}

\begin{keyword}
transportation network company, ride-hailing platform, autonomous vehicles, public transit, regulatory policies, transport equity.
\end{keyword}

\end{frontmatter}

\section{Introduction}
Access to transportation is a crucial factor in determining quality of life. In highly populated locations, public transit can provide mobility services on a large scale, connecting individuals to jobs, educational opportunities, nutritious food, and medical visits \cite{sanchez2004transit}. In low-density locations, however, public transit agencies find it financially difficult to maintain an acceptable level of service, resulting in restricted coverage or transit gaps \cite{borjesson2020rural}. In these underserved locations, rich inhabitants can still use their own vehicles to go around, but disadvantaged groups (e.g., low-income individuals) are more likely to rely on public transportation and have fewer options to meet their fundamental travel needs. 
This transit equity gap hurts us all: when our neighbors are isolated and struggling, it undermines the local economy and weakens the social cohesion of the city.

However, transportation is on the verge of dramatic upheaval.  Autonomous vehicle (AV) technology is being tested and deployed in numerous cities around the world. While level-5 fully autonomous driving could be decades away, ride-hailing services with level-4 AVs, offered by transportation network companies (TNCs) in urban and suburban areas, are expected to arrive much earlier \cite{Waymo2022SF}. Automation will bring radical changes to TNC business models: (a) compared to human drivers who selfishly reposition themselves to pursue individual earning opportunities, AVs can be centrally dispatched to achieve system-level objectives \cite{zhang2016control}; (b) compared to human drivers who share a significant portion of the trip fare, AVs can remove the costs associated with human drivers and reduce the trip fare substantially \cite{chen2016operations}. These favorable factors have profound implications on transport equity: with better control of their fleets and lower operational costs, TNCs can improve mobility services in underserved areas and enhance transportation access for disadvantaged populations while preserving financial sustainability.

Despite its potential to mitigate inequities, the actual equity impact of AV deployment in the TNC market remains uncertain. In general, transport equity embraces the concept of fair or equal distribution of impacts (either costs or benefits) across different groups of network users \cite{litman2002evaluating}. This distribution can be viewed from a vertical perspective, referred to as vertical/social equity, which requires that the allocation of benefits and costs favors disadvantaged groups, e.g., people that are disadvantaged in social-economic status (e.g., income, education, age, gender, ethnicity, etc.) should be favored by transport policies. On the other hand, the horizontal perspective, referred to as horizontal/spatial equity, involves analyzing the distribution of impacts across various spatial locations or trip movements and requires that people in different regions of the city should have access to the same quality of mobility services. Unlike public transportation, AV technology is being led by multi-billion dollar companies in the private sector. The primary goal of these companies is to maximize profits, which is naturally contradicted by the equity objectives of the social planner. As a result, the deployment of AV through TNC platforms may negatively affect spatial equity
and social equity
within the multimodal transportation network. On the spatial side, for-profit ride-hailing companies (e.g., Uber, Waymo, Baidu, etc.) tend to focus their services in regions with high demand \cite{hughes2016transportation}. These high-demand areas are often congested communities in the city core, which already have robust public transit services. The geographic concentration of TNC services will further enlarge the accessibility gap between wellserved and underserved areas, which exacerbates spatial inequity. On the social side, as for-hire AVs flock into these areas, they will compete with public transit, degrade transit service qualities, and disproportionately affect low-income individuals that are more transit-dependent \cite{hall2018uber,erhardt2022transportation}. This will enlarge the accessibility gap between passengers with distinct socioeconomic status (e.g., level of income),  which exacerbates social inequity. These issues can be further reinforced as AV technology enables TNCs to have lower costs and stronger control over their fleet. Without regulatory interventions, market forces may drive us to a transport equity catastrophe: public transit ridership will decline, traffic congestion will increase, people who are already transportation-disadvantaged will be left further behind, and the benefits of AVs will be inaccessible to those who need them most.

This paper aims to assess the equity impacts of for-hire AVs in the ride-hailing network and investigate regulatory policies that ensure AV deployment can benefit underserved areas and disadvantaged groups. We consider a TNC platform that deploys a fleet of AVs to offer mobility-on-demand services to maximize its profit, a public transit agency that offers transit services to maximize transit ridership under a fixed budget, and a group of multiclass passengers with different income levels who can choose among ride-hailing, public transit, and bundled services that combines ride-hailing and public transit to reach their destinations. To maintain an equitable multimodal transportation network, it is crucial to investigate: (a) whether AV deployment will exacerbate spatial and social inequity in an unregulated environment; (b) whether it is necessary to intervene with AV regulations; and (c) how to design regulatory policies to mitigate the negative impacts of AVs and improve transport equity. To this end, we will develop a game-theoretic network equilibrium model to examine how travelers with heterogeneous socioeconomic attributes make mode choices in the multimodal transportation system, how the TNC platform and the transit agency interact with each other to reach their own objectives, and how these interactions are affected by the imposed regulations. The major contribution of this paper is summarized below:
\begin{itemize}
    \item We develop a game-theoretic model to characterize the strategic interactions between the TNC platform and the public transit agency over the transport network. The model captures the essential economic aspects of the multimodal transportation network, including the spatial prices of the TNC platform, the service fare and frequencies of public transit, the spatial distribution of idle AVs, waiting times for ride-hailing and transit services, the boarding/alighting station choices of public transit riders, the disutility/generalized costs of distinct mobility modes, and the modal choice of multiclass passengers with distinct income levels. We use the model to evaluate the spatial and social equity in transport access among distinct population groups. {\em To our best knowledge, this is the first work that studies spatial and social equity for multimodal transportation networks considering the strategic interaction between TNC platforms and public transit agencies.}
    \item We solve the game problem via best response methods, and theoretically characterize the solution to the game through an approximate Nash equilibrium and propose a computational framework to rigorously bound the error of this approximation, despite that the optimization problem of each player in the proposed game-theoretic model is highly non-convex. In particular, for TNC's profit-maximization subproblem, we use primal decomposition to compute the globally optimal solution to a relaxed reformulation of the problem, which establishes a tight upper bound to evaluate the performance of TNC's decisions.
    For the transit ridership maximization subproblem, we adopt a grid-based search algorithm to compute its globally optimal solution.
    The developed upper bound enables us to assess the quality of the derived candidate Nash equilibrium solution by showing that it is at least as good as an $\epsilon$-Nash equilibrium \cite{daskalakis2006note, li2019connections}, where $\epsilon$ can be determined numerically through the above procedure. 
    \item We define accessibility as the expected maximum utility and use the Theil index to quantify both spatial and social inequity to investigate the equity impacts of AV deployment in the absence of regulations. We find that although the proliferation of AVs improves overall accessibility, the benefits are not fairly distributed across different geographic locations and among distinct population groups, and the deployment of AVs will exacerbate both spatial and social inequity gaps at the same time. We point out that the increase in spatial inequity arises from the geographic concentration of ride-hailing services in high-demand areas due to the for-profit nature of the TNC platform, and the increased social inequity gap is due to the fact that the benefits of AV-enabled ride-hailing services are primarily enjoyed by  individuals with higher income, whereas those with lower income  are disproportionately transit-dependent.
    \item We evaluate the equity impacts of two regulatory policies, including (a) a minimum service-level requirement on ride-hailing services, which improves the spatial equity in the transport network; (b) a subsidy on transit services by taxing ride-hailing services, which promotes the use of public transit and improves spatial and social equity of the transport network. We show that the minimum service-level requirement entails a trade-off: as a higher minimum service level is imposed, the spatial inequity reduces, but the social inequity will be exacerbated. On the other hand, subsidies on transit services specifically benefit low-income households in underserved areas. In certain regimes, the subsidy increases public transit ridership and simultaneously bridges spatial and social inequity gaps. The reasons for the above results are identified and carefully discussed in Section \ref{service} and \ref{subsidies}, respectively.
\end{itemize}

The remainder of this article will proceed as follows.  Related works in the existing literature will be surveyed in Section \ref{related works}. A game-theoretic model will be developed for the competition between TNC and transit in Section \ref{game-theoretic model}. A solution method will be proposed in Section \ref{solution method}, and numerical studies will be presented in Section \ref{market outcomes}-\ref{regulations}. Finally, concluding remarks are offered in Section \ref{conclusion}.

\section{Related Works} \label{related works}

There is a large body of literature that investigates the operational strategies of for-hire AV fleets, examines the interaction between TNC and public transit, and quantifies their impacts on transport equity. Below we summarize the relevant literature from three aspects: (a) autonomous mobility-on-demand services; (b) interactions between TNC and public transit; and (c) impacts of AVs and TNCs on transport equity.

\subsection{Autonomous Mobility-on-Demand Services}

A recurrent topic of interest in the literature is the planning and operation of for-hire AV fleets to provide autonomous mobility-on-demand (AMoD) services. \cite{zhang2016control} presented a queuing-theoretic model to investigate the optimal rebalancing strategies of AMoD fleets. Their case study of New York showed that the current taxi demand in Manhattan can be met with about 8,000 AVs (roughly 70\% of the size of the current taxi fleet operating in Manhattan). \cite{wen2019value} investigated the value of demand information in AMoD systems and showed that aggregate demand information can lead to better service performance. \cite{zhang2016model} developed a model predictive control (MPC) approach to optimize vehicle scheduling and routing in AMoD systems. \cite{tsao2018stochastic} utilized short-term probabilistic forecasts for dispatching and rebalancing of AMoD systems and developed a stochastic MPC algorithm to efficiently solve the problem. \cite{alonso2017predictive} proposed a predictive method for the vehicle routing and assignment for an AMoD system with ridesharing. \cite{iglesias2019bcmp} formulated AMoD systems as a closed, multi-class Baskett-Chandy-Muntz-Palacios (BCMP) queuing network model to characterize the passenger arrival process, traffic, the state-of-charge of electric vehicles, and the availability of vehicles at the stations and derive the routing and charging policies. \cite{hyland2018dynamic} used agent-based simulations to evaluate the performance of AV-traveler assignment strategies. Their results showed that the spatial distribution of traveler requests significantly impacts the empty fleet miles generated by shared AVs. \cite{gueriau2018samod,guo2020deep} investigated the ride-sharing vehicle dispatching as well as the request-vehicle assignment problem for AMoD systems using reinforcement learning. \cite{chen2016operations} examined the operation of shared electric AV fleets under distinct vehicle range and charging infrastructure settings. \cite{turan2020dynamic} investigated the joint routing, battery charging, and pricing for electrified AMoD systems and used deep reinforcement learning to develop a near-optimal control policy. For a comprehensive literature review on the operation of AMoD systems, please refer to \cite{zardini2022analysis}.
{\em Note that all the aforementioned works primarily focus on the planning and operation of AMoD systems, while the interaction between AMoD and other modes is not explicitly considered.} 

\subsection{Interactions Between AMoD and Public Transit}

An increasing body of literature revealed the interaction between AMoD/ridesourcing services and public transit using both data-driven and model-based methods. Based on real-world data, \cite{hall2018uber} estimated the effect of Uber on public transit ridership using a difference-in-differences design. Their results showed that Uber is a complement to the average transit agency, increasing ridership by five percent after two years. On the other hand, \cite{meredith2021relationship} analyzed detailed individual trip records at a spatially and temporally granular level, and found that only approximately 2\% of TNC trips complement public transit, while 45\% to 50\% of TNC trips substitute transit and this percentage drops during COVID-19 shutdowns. Based on realistic TNC data and public transit data, \cite{erhardt2022transportation} found that TNCs are responsible for a net ridership decline of about 10\%, offsetting net gains from other factors such as service increases and population growth. From the modeling perspective, \cite{ke2021equilibrium} proposed a user equilibrium based mathematical model to explore the complement and substitution of ride-sourcing to public transit. \cite{zhu2020analysis} established a network equilibrium model to analyze the multimodal commute behavior with ridesplitting programs as both feeders and competitors to public transit. \cite{zhu2021competition} developed a bilevel game-theoretic approach to model the cooperative and competitive relationship between the TNC and the government. They highlighted that a carefully designed subsidy can benefit both the TNC and society, especially in areas with low public transit accessibility. \cite{salazar2019intermodal} and \cite{gurumurthy2020first} considered AMoD services for first/last-mile connection to public transport. \cite{salazar2018interaction} presented a network flow model to capture the interaction between AMoD and public and transit, and designed a pricing and tolling scheme that achieves the social optimum. \cite{pinto2020joint}, \cite{sieber2020improved} and \cite{kumar2022planning} studied the joint planning and operation of shared AV fleet and public transit to improve mobility services in low-density areas.

This paper significantly differs from the aforementioned works in two aspects: (a) the aforementioned studies primarily consider the multimodal transportation system at the aggregate level, and in contrast, our work considers a multimodal transportation network with multiclass travelers; (b) all aforementioned studies neglect the equity impacts of ride-hailing services in the multimodal transportation systems. {\em We believe our paper is the first to investigate the spatial and social equity implications of the strategic interaction between AMoD services and public transit over a transport network.}

\subsection{Impacts of AVs and TNCs on Transport Equity}

Emerging technologies, such as AVs and TNCs, have profoundly disrupted the transportation sector, but their impacts on transport equity remain unclear \cite{dianin2021implications}. On the one hand, AVs and TNCs have great potential to close existing inequity gaps \cite{shaheen2017travel}. \cite{brown2018ridehail} revealed that black riders were 73 percent more likely than white riders to have a taxi trip canceled and waited for 6-15 minutes longer than white riders, while TNC services nearly eliminate the racial-ethnic discrimination in service quality. \cite{palm2021equity} found significant academic evidence that new mobility technologies like ride-hailing have the potential to meaningfully address some disadvantaged travelers’ transportation problems or close existing gaps in transit services like the first/last mile. \cite{wen2018transit} simulated and evaluated integrated AV and public transportation systems and showed that encouraging ride-sharing, allowing in-advance requests, and combining fare with transit help enable service integration and encourage sustainable travel. \cite{chen2017connecting} compared two different relative spatial position (RSP) designs in an integrated e-hailing/fixed-route transit system and showed the great potential of e-hailing in filling transit gaps. \cite{cohn2019examining} and \cite{nahmias2021benefits} examined the equity impacts of AVs and showed that AMoD services could expand transportation access to car-free and underserved populations and bridge the mobility gaps between demographic groups. \cite{ahmed2020quantifying} revealed that shared AV mobility services can significantly increase job accessibility and low-income workers in low-density areas enjoy a higher benefit. On the other hand, there are growing concerns that TNCs and AVs may widen existing inequity gaps. \cite{ge2016racial} and \cite{yang2021equitable} investigated racial discrimination in TNCs and reported that customers with color experience more trip cancellations and longer waiting times. \cite{hughes2016transportation} studied the spatial distribution of TNC services and found that waiting time is lower in densely populated areas. In addition, the deployment of AV technology could also raise new issues such as induced demand \cite{cohn2019examining} and exacerbate geographic concentration \cite{jiao2021shared}. Given these concerns, it is widely agreed that regulatory policies should be imposed to ensure that AV technology promotes equity in future mobility systems \cite{milakis2017policy}.

{\em Distinct from most aforementioned works, we aim to reveal the fundamental mechanisms by which market incentives lead to systemic transport inequity under the interaction between TNC and public transit.} We will develop a mathematical framework that characterizes how market forces drive TNC services and public transit in the absence of regulatory intervention and quantifies both spatial and social inequity in the multimodal transportation system.

\newpage
\section{The Game-Theoretic Model} \label{game-theoretic model}


Consider a city divided into $M$ geographic zones and let $\mathcal{V}$ be the set of zone centroids. These zones are connected by a multimodal transport network which consists of a road network $\mathcal{G}_r(\mathcal{V}_r,\mathcal{E}_r)$ and a transit network $\mathcal{G}_t(\mathcal{V}_t,\mathcal{E}_t)$, where $\mathcal{V}_r$ denotes the set of road nodes corresponding to the intersections in the road network, $\mathcal{E}_r$ denotes the set of road links, $\mathcal{V}_t$ represents the set of transit stations, and $\mathcal{E}_t$ denotes the set of transit links. More specifically, the transit network consists of a set of transit lines $\mathcal{L}$. Each transit line $l\in \mathcal{L}$ is composed of a set of stations $\mathcal{V}_t^l \subset \mathcal{V}_t$ which are connected by a set of links $\mathcal{E}_t^l \subset \mathcal{E}_t$\footnote{In the link representation of the transit network, each transit link belongs to a single transit line, and distinct transit lines have no overlaps in transit links.}. Each geographic zone $i \in \mathcal{V}$ contains/covers a set of transit stations $\mathcal{V}_t^i \subset \mathcal{V}_t$. We consider passenger trips at the zonal granularity where zone centroids represent the origins and destinations of travelers and each trip is assigned with an origin zone $i\in\mathcal{V}$ and a destination zone $j\in\mathcal{V}$.
The travel demand is subdivided into $K$ classes based on income levels in each zone. In the multimodal transport system, the TNC platform operates a fleet of AVs to provide mobility-on-demand services on the road network, and the public transit agency provides transit services through the transit network, and passengers in different classes make mode choices over the multimodal transport network. The decision-making of passengers, the TNC platform, and the public transit agency interact with each other and constitute the market equilibrium. This section formulates a mathematical model to capture the competition between the TNC platform and the public transit agency over a transportation network. The details of the model are presented below.

\subsection{Incentives of Passengers}

In the multimodal transport network, passengers can take direct AMoD services, public transit, or bundled services combining AMoD and transit to reach their destinations. We employ the random utility model to characterize travelers' mode choice behavior and use a multinomial logit model to capture the passenger demand over distinct modes. Below we derive the disutility/cost of distinct mobility options and characterize the travel demand for the multimodal transport network.


\subsubsection{Generalized costs of AMoD trips} \label{sec_generalized_cost_AMoD}
The cost of passengers choosing AMoD services (indexed by $a$) comprises waiting time, travel time, and monetary cost. We define the generalized travel cost of direct AMoD trips as the weighted sum of waiting time, in-vehicle time and trip fare:
\begin{equation} \label{generalized_cost_a}
    c_{ij,k}^a = \alpha_k w_i^a + \beta_k \frac{l_{ij}^a}{v_a} + \gamma_k (b + r_i^a l_{ij}^a ) ,
\end{equation}
where $c_{ij,k}^a$ is the average disutility/generalized travel cost for travelers choosing direct AMoD services from origin zone $i$ to destination zone $j$ in income class $k$, $w_i^a$ is the average waiting time for AMoD services in zone $i$, $b$ is the average base fare of AMoD rides, $r_i^a$ is the average per-distance rate of AMoD rides originating from zone $i$, $l_{ij}^a$ denotes the average trip distance from zone centroid $i$ to zone centroid $j$, $v_a$ represents the average speed of TNC vehicles, $\alpha_k$, $\beta_k$ and $\gamma_k$ are the cost weights associated with waiting time, in-vehicle time and trip fare for travelers in income class $k$, which correspond to travelers' valuation on waiting time, trip time and monetary cost, respectively. 

The passenger waiting time for AMoD services $w_i^a$ depends on passenger demand and the vehicle supply in the TNC market. To characterize $w_i^a$, note that each trip is initiated by a passenger who requests a pickup from zone $i$ through the user app (for either direct AMoD service or first-mile/last-mile service). Upon receiving the request, the platform immediately matches the passenger to the closest idle vehicle in the same zone. The matched idle vehicle then travels to pick up the waiting passenger. In this case, the TNC platform does not prioritize either type of AMoD service in the vehicle-passenger matching process, and passengers are served on a first-come-first-served basis. Consequently, the direct AMoD rides and AMoD rides as first/last mile connections are indifferent in passenger waiting time. At the stationary state, the matching rate $m^{c-v}_i$ between passengers and vehicles depends on the number of waiting passengers $N_i^a$ and the number of idle vehicles $N_i^I$ in distinct zones\footnote{We assume that the matching rate depends on the idle vehicles and waiting passengers within the same zone. This is a reasonable assumption if the platform matches passengers with vehicles within a certain matching radius smaller than the size of the zone, or when there exists a large number of densely distributed idle vehicles. The case of inter-zone matching is left for future work.}. We introduce the Cobb-Douglas production function to capture the passenger-vehicle matching process \cite{yang2011equilibrium,zha2016economic}, which can be written as:
\begin{equation} \label{cobb_douglas_function}
    m_i^{c-v} = M^{c-v}(N_i^a,N_i^I) = A_i(N_i^a)^{a_1}(N_i^I)^{a_2} ,
\end{equation}
where $A_i$ is the zone-specific scaling parameter that captures possible factors in the matching between idle TNC vehicles and passengers, such as the size of the zone, the average traffic speed on roads, and the demand/supply distribution, and $a_1$ and $a_2$ are the elasticity parameters. Based on Little's law, the average number of waiting passengers/idle TNC vehicles equals the average arrival rate $\lambda_i$ multiplied by the average waiting time ($w_i^a$/$w_i^v$) that a passenger/vehicle spends in the system before being matched:
\begin{equation} \label{littles_law}
\begin{cases}
    N_i^a = \lambda_i w_i^a \\
    N_i^I = \lambda_i w_i^v
\end{cases} .
\end{equation}
Substituting (\ref{littles_law}) and the stationary condition\footnote{The matching rate should equal both the arrival rate of idle vehicles and the arrival rate of waiting passengers in distinct zones.} $m_i^{c-v}=\lambda_i$ into (\ref{cobb_douglas_function}) and setting $a_1=1$ and $a_2=0.5$ yields the average passenger waiting time as:
\begin{equation} \label{waiting_time_AMoD}
    w_i^a = \frac{1}{A_i \sqrt{N_i^I}} .
\end{equation}
Equation (\ref{waiting_time_AMoD}) indicates that passengers' average waiting time is inversely proportional to the square root of the number of idle vehicles in distinct zones, which has been widely used in street-hailing taxi market \cite{douglas1972price}, radio dispatching taxi market \cite{arnott1996taxi}, and online ride-hailing market \cite{li2021spatial}. 

\subsubsection{Generalized costs of transit trips} \label{sec_generalized_cost_transit}
Passengers choosing public transit (indexed by $p$) endure waiting time, travel time, monetary cost, and the access and egress cost of walking. Each zone probably contains multiple transit stations. Therefore,  passengers may board the transit network at different stations in the origin zone and exit the transit network at distinct stations in the destination zone. Passengers selecting different stations to board and alight have different access and egress distances and thereby different walking times. Besides, different boarding and alighting stations in the transit network may also correspond to distinct transit paths and incur different waiting times, in-vehicle times and trip fares. We first characterize the generalized costs of direct transit trips with distinct boarding/alighting stations. To this end, let $s_i \in \mathcal{V}_{t}^i$ be the boarding station in origin zone $i$ and $s_j \in \mathcal{V}_{t}^j$ be the boarding station in destination zone $j$, respectively. The generalized travel cost of direct transit trips with boarding station $s_i$ and alighting station $s_j$ can be defined as:
\begin{equation} \label{generalized_cost_p_station}
    c_{s_i s_j,k}^p = \alpha_k w_{s_i s_j,k}^p + \beta_k \frac{l_{s_i s_j,k}^p}{v_p} + \gamma_k r^p l_{s_i s_j,k}^p + \theta_k \left(\frac{d_{s_i}}{v_w} + \frac{d_{s_j}}{v_w} \right) ,
\end{equation}
where $c_{s_i s_j,k}^p$ is the average disutility/generalized travel cost for travelers choosing direct public transit from zone $i$ to zone $j$ with boarding station $s_i$ and alighting station $s_j$ in income class $k$, $w_{s_i s_j,k}^p$ represents the average waiting time from station $s_i$ to station $s_j$ for income class $k$ in the transit network, $r^p$ is the average per-distance transit fare\footnote{We consider an average per-distance fare to represent the fare structure of major metro systems, in which the fare calculation is based on the stations passengers enter and exit and the distance-based transit fare serves as a reasonable approximation.}, $l_{s_i s_j,k}^p$ denotes the average trip distance from station $s_i$ to station $s_j$ for income class $k$ in the transit network, $d_{s_i}$ is the access/egress distance from zone centroid $i$ to station $s_i$, $v_p$ and $v_w$ are the average operating speed of public transit and the average walking speed respectively, $\alpha_k$, $\beta_k$, $\gamma_k$ and $\theta_k$ are the cost weights associated with waiting time, in-vehicle time, trip fare, and walking time for travelers in income class $k$. The average waiting time $w_{s_i s_j,k}^p$ and trip distance $l_{s_i s_j,k}^p$ are endogenously related to passengers' route decisions in the transit network, which are further affected by the operational decisions of the public transit agency. We extend the hyperpath transit assignment model proposed by \cite{spiess1989optimal} to capture the route decision in the transit network and characterize the endogenous variables $w_{s_i s_j,k}^p$ and $l_{s_i s_j,k}^p$. Below we detail the hyperpath transit assignment model.


Consider an origin-destination station pair $s_i s_j$ in the transit network. Due to the multiplicity of the transit network, there may be multiple transit paths that connect station $s_i$ and station $s_j$, each of which may incur distinct waiting times, in-vehicle times, and trip fares for passengers. Besides, a transit path may traverse multiple transit lines due to transfers between different lines. Passengers wait for transit services at boarding and transfer stations, at which the expected waiting time depends on the frequencies of traversed lines/links. Let $f_l$ be the frequency of transit line $l \in \mathcal{L}$ and $f_a$ be the frequency of transit link $a \in \mathcal{E}_t$. The frequency of a transit link is determined by the frequency of the transit line it belongs to, i.e., $f_a = f_l$ if $a \in \mathcal{E}_t^l$. Let $\mathcal{E}_t^{s+}$ and $\mathcal{E}_t^{s-}$ denote the set of outgoing links and incoming links at station $s \in \mathcal{V}_t$, respectively. To characterize $w_{s_i s_j,k}^p$ and $l_{s_i s_j,k}^p$, we impose the following assumptions:


\begin{assumption} \label{assump_exponential}
    The vehicle inter-arrival time of a transit line $l \in \mathcal{L}$ follows an exponential distribution with rate $f_l$\footnote{We assume an exponential distribution of vehicle inter-arrival times for mathematical tractability. It is a widely used simplifying assumption in transit assignment literature \cite{spiess1989optimal,nguyen1988equilibrium} to serve as a reasonable approximation. Other exact distributions of vehicles' inter-arrival times that consider the minimum headway between trains (e.g., a truncated propability distribution) can be also incorporated into the model, but we leave this as the future work.}.
\end{assumption}
\begin{assumption} \label{assump_minimize}
    Passengers make route decisions in the transit network to minimize the expected generalized travel cost. At any station $s$, passengers waiting for transit services have selected a set of attractive lines $\mathcal{L}_s$, and they can be served by the first arriving vehicle from these attractive lines.
\end{assumption}
Assumptions \ref{assump_exponential} and \ref{assump_minimize} state that the vehicle inter-arrival times of transit line $l$ follow an exponential distribution with the mean of its headway $1/f_l$, and upon the arrival of the first vehicle from the set of transit lines, passengers can get on the vehicle immediately without waiting. Based on Assumptions \ref{assump_exponential} and \ref{assump_minimize}, the expected waiting time and the line probabilities at station $s$ relate to the frequencies through:
\begin{equation}
    w_s^p = \frac{1}{\sum\limits_{l \in \mathcal{L}_s} f_l} ,
\end{equation}
\begin{equation}
    \mathbb{P}_s^l = \frac{f_l}{\sum\limits_{l' \in \mathcal{L}_s} f_{l'}}, 
\end{equation}
where $w_s^p$ is the expected waiting time at station $s$ in the transit network and $\mathbb{P}_s^l$ is the probability that the passenger boarding line $l$ at station $s$. 

Consider a passenger that determines the sets of attractive lines at distinct stations (referred to as strategy \cite{spiess1989optimal} or hyperpath \cite{nguyen1988equilibrium}) to minimize the expected generalized travel cost from boarding station $s_i$ to alighting station $s_j$. To find the optimal hyperpath, we introduce the auxiliary variables $g_s, s\in \mathcal{V}_t$, which satisfy:
\begin{equation}
    g_s = \begin{cases}
1, \quad \text{if } s=s_i \\
-1, \quad \text{if } s=s_j \\
0, \quad \text{otherwise} .
    \end{cases}
\end{equation}
Intuitively, $g_s$ accounts for the net outflow of passengers, i.e., the difference between outflow and inflow, at distinct stations. The net outflow is 1 and -1 at the boarding/origin station $s_i$ and the alighting/destination station $s_j$, respectively. While at other stations, the inflow and outflow should be balanced.
Passengers in distinct income classes have heterogeneous cost weights of waiting time, in-vehicle time and trip fare in generalized costs of transit trips. Therefore, we formulate the transit assignment problem to identify the optimal hyperpath for distinct income classes. Let $l_a^p$ be the length of link $a \in \mathcal{V}_t$, and denote $v_{ak}$ and $w_{sk}^p$ as the flow of passengers in income class $k$ on link $a\in \mathcal{E}_t$ and the expected waiting time of passengers in income class $k$ at station $s \in \mathcal{V}_t$, respectively. The optimal hyperpath assignment program for income class $k$ can be formulated as the following optimization \cite{spiess1989optimal}:
\begin{subequations} \label{optimal_hyperpath}
\begin{align}
\max_{\{{v_{ak}\}}_{a\in \mathcal{E}_t},\{{w_{sk}^p\}}_{s \in \mathcal{V}_t}} \quad & \frac{\beta_k}{v_p} \sum_{a\in \mathcal{E}_t} l_a^p v_{ak} + \gamma_k r^p \sum_{a\in \mathcal{E}_t} l_a^p v_{ak} + \alpha_k \sum_{s \in \mathcal{V}_t} w_{sk}^p \label{objective_hyperpath} \\
\mathrm{s.t.} \quad
& \sum\limits_{a \in \mathcal{E}_t^{s+}} v_{ak} = \sum\limits_{a \in \mathcal{E}_t^{s-}} v_{ak} + g_s, \quad \forall s \in \mathcal{V}_t \label{flow_conservation_hyperpath} \\
&v_{ak} \leq f_a w_{sk}^p, \quad \forall a \in \mathcal{E}_t^{s+}, \forall s \in \mathcal{V}_t \label{flow_proportion_hyperpath} \\
&v_{ak} \geq 0, \quad \forall a \in \mathcal{E}_t \label{non_negativity_hyperpath} 
\end{align} 
\end{subequations}
The assignment program (\ref{optimal_hyperpath}) minimizes the expected generalized travel cost (\ref{objective_hyperpath}) from boarding station $s_i$ to alighting station $s_j$ for income class $k$ subject to the flow conservation constraints (\ref{flow_conservation_hyperpath}), flow proportion constraints (\ref{flow_proportion_hyperpath}), and the non-negativity constraints (\ref{non_negativity_hyperpath}). In the expected generalized cost (\ref{objective_hyperpath}), the first and second term account for the cost associated with in-vehicle time and trip fare on transit links respectively, while the last term represents the cost associated with waiting time at stations. The flow proportion constraints (\ref{flow_proportion_hyperpath}) are derived by multiplying the probability of choosing link $a \in \mathcal{E}_t^{s+}$ with the number of passengers waiting at station $s\in \mathcal{V}_t$. Note that given the transit fare $r^p$ and the service frequencies $\{f_l\}_{l \in \mathcal{L}}$, the objective function (\ref{objective_hyperpath}) and the involved constraints (\ref{flow_conservation_hyperpath})-(\ref{non_negativity_hyperpath}) are all linear. Consequently, the hyperpath assignment program (\ref{optimal_hyperpath}) yields a linear programming problem to which the globally optimal solution can be easily obtained. Let $\{v_{ak}^{*}\}_{a\in\mathcal{E}_t}$ and $\{w_{sk}^{p^*}\}_{s\in\mathcal{V}_t}$ be the optimal solution/hyperpath to (\ref{optimal_hyperpath}). The average waiting time $w_{s_i s_j,k}^p$ of income class $k$ from boarding station $s_i$ to alighting station $s_j$ can be characterized as:
\begin{equation}
    w_{s_i s_j,k}^p = \sum_{s\in \mathcal{V}_t} w_{sk}^{p^*} .
\end{equation}
The average trip distance $l_{s_is_j,k}^p$ of income class $k$ from boarding station $s_i$ to alighting station $s_j$ is given by:
\begin{equation}
    l_{s_i s_j,k}^p = \sum_{a \in \mathcal{E}_t} l_a^p v_{ak}^{*} .
\end{equation}

Passengers select boarding/alighting stations in the origin/destination zone by comparing the generalized costs of transit trips with distinct boarding/alighting stations. We use a logit model to characterize the probability of selecting boarding station $s_i \in \mathcal{V}_t^i$ and alighting station $s_j \in \mathcal{V}_t^j$ in direct transit services:
\begin{equation}
    \mathbb{P}_{s_i s_j,k}^p = \frac{\exp{\left(- \eta c_{s_i s_j,k}^p\right)}}{\sum\limits_{s_i' \in \mathcal{V}_t^i}\sum\limits_{s_j' \in \mathcal{V}_t^j}\exp{\left(-\eta c_{s_i' s_j',k}^p\right)}} ,
\end{equation}
where $\eta$ is the scaling parameter in the boarding/alighting station choice logit model. Overall, the expected generalized cost (also referred to as composite cost) for travelers in income class $k$ taking direct public transit from origin zone $i$ to destination zone $j$ given the set of boarding stations $\mathcal{V}_t^i$ and alighting stations $\mathcal{V}_t^j$ is given by the following logsum form \cite{de2007logsum}:
\begin{equation} \label{generalized_cost_p_zone}
    c_{ij,k}^p = -\frac{1}{\eta} \log \sum_{s_i \in \mathcal{V}_t^i} \sum_{s_j \in \mathcal{V}_t^j} \exp{\left(- \eta c_{s_i s_j,k}^p\right)} .
\end{equation}

\subsubsection{Generalized costs of bundled trips}
The travel cost of bundled services (indexed by $b$) comprises waiting time, in-vehicle time, trip fare, and the access and egress cost of walking (if any). Using AMoD for the first-mile or/and last-mile connection, bundled services can be further categorized into three possible scenarios: (1) the bundle of first-mile AMoD service and public transit (indexed by $b_1$); (2) the bundle of last-mile AMoD service and public transit (indexed by $b_2$); and (3) the bundle of both first-mile and last-mile AMoD service and public transit (indexed by $b_3$).
Analogously, bundled trips with distinct boarding/alighting stations incur different travel costs. When choosing bundled services, passengers select the boarding/alighting station in the origin/destination zone to minimize the generalized travel cost. We characterize the generalized costs of bundled trips with distinct boarding/alighting stations for distinct types of bundled services. Let $s_i \in \mathcal{V}_t^i$ be the boarding station in origin zone $i$ and $s_j \in \mathcal{V}_t^j$ be the alighting station in destination zone $j$, respectively. The generalized costs of distinct types of bundled trips with boarding station $s_i$ and $s_j$ are characterized as:
\begin{equation} \label{generalized_cost_b1_station}
    c_{s_i s_j,k}^{b_1} = \underbrace{\alpha_k w_i^a + \beta_k \frac{d_{s_i}}{v_a} + \gamma_k (b+r_i^a d_{s_i})}_\text{first leg AMoD} + \underbrace{\alpha_k w_{s_i s_j,k}^p  + \beta_k \frac{l_{s_i s_j,k}^p}{v_p} + \gamma_k r^p l_{s_i s_j,k}^p}_\text{public transit} + \underbrace{\theta_k \frac{d_{s_j}}{v_w}}_\text{walking} ,
\end{equation}

\begin{equation} \label{generalized_cost_b2_station}
    c_{s_i s_j,k}^{b_2} = \underbrace{\theta_k \frac{d_{s_i}}{v_w}}_\text{walking} +  \underbrace{\alpha_k w_{s_i s_j,k}^p  + \beta_k \frac{l_{s_i s_j,k}^p}{v_p} + \gamma_k r^p l_{s_i s_j,k}^p}_\text{public transit} + \underbrace{\alpha_k w_j^a + \beta_k \frac{d_{s_j}}{v_a} + \gamma_k (b+r_j^a d_{s_j})}_\text{last leg AMoD} ,
\end{equation}

\begin{equation} \label{generalized_cost_b3_station}
    c_{s_i s_j,k}^{b_3} = \underbrace{\alpha_k w_i^a + \beta_k \frac{d_{s_i}}{v_a} + \gamma_k (b+r_i^a d_{s_i})}_\text{first leg AMoD} +  \underbrace{\alpha_k w_{s_i s_j,k}^p  + \beta_k \frac{l_{s_i s_j,k}^p}{v_p} + \gamma_k r^p l_{s_i s_j,k}^p}_\text{public transit} + \underbrace{\alpha_k w_j^a + \beta_k \frac{d_{s_j}}{v_a} + \gamma_k (b+r_j^a d_{s_j})}_\text{last leg AMoD} .
\end{equation}
Equation (\ref{generalized_cost_b1_station})-(\ref{generalized_cost_b3_station}) indicate that the generalized travel costs of bundled services are determined by the sum of the costs of different trip segments (AMoD, public transit and walking) along the intermodal trip, where the generalized costs of AMoD trips and public transit trips are derived in Section \ref{sec_generalized_cost_AMoD} and \ref{sec_generalized_cost_transit}, separately.

Passengers determine the boarding/alighting station based on the generalized costs of bundled trips with distinct boarding/alighting stations\footnote{We assume that passengers choose boarding/exiting stations in the same zone. This already leaves passengers with a rich set of choices given the size of the zone is relatively large. We leave the modeling of inter-zone transfers as future work.}. Under the logit model, the probability of selecting boarding station $s_i \in \mathcal{V}_t^i$ and alighting station $s_j \in \mathcal{V}_t^j$ in distinct types of bundled services are
\begin{equation} \label{boarding_alighting_logit}
    \mathbb{P}_{s_i s_j,k}^t = \frac{\exp{\left(-\eta c_{s_i s_j,k}^t\right)}}{\sum\limits_{s_i' \in \mathcal{V}_t^i}\sum\limits_{s_j' \in \mathcal{V}_t^j}\exp{\left(-\eta c_{s_i' s_j',k}^t\right)}} , \quad t \in \{b_1, b_2, b_3\} .
\end{equation}
Finally, the expected generalized cost for travelers in income class $k$ choosing distinct bundled services from origin zone $i$ to destination zone $j$ are given by:
\begin{equation} \label{generalized_cost_b_zone}
    c_{ij,k}^t = -\frac{1}{\eta} \log \sum_{s_i \in \mathcal{V}_t^i} \sum_{s_j \in \mathcal{V}_t^j} \exp{\left(-\eta c_{s_i s_j,k}^t\right)}, \quad t \in \{b_1, b_2, b_3\} .
\end{equation}

\begin{remark}
This paper neglects the traffic congestion and transit crowding in the multimodal transport network. For simplicity, we do not model traffic speed as an endogenous variable that depends on the number of TNC vehicles. We believe this is a reasonable simplification and would not change the essential features of TNC services. We also neglect the crowding effect in the transit network since transit service typically has a large capacity. Overall, we believe that the impacts of traffic congestion and transit crowding are second-order factors that would not influence the major insights of this research. We leave the modeling of traffic congestion and transit crowding as future work.
\end{remark}

\subsubsection{Demand of passengers}

In the multimodal transportation system, passengers in distinct income classes choose among direct AMoD services (mode $a$), public transit (mode $p$), the bundle of first leg AMoD with transit (mode $b_1$), the bundle of last leg AMoD with transit (mode $b_2$), the bundle of first and last leg AMoD with transit (mode $b_3$), and outside option (mode $o$) to reach their destinations at the minimum cost. Let $\mathcal{T}=\{a,p,b_1,b_2,b_3,o\}$ be the set of possible travel modes in the multimodal transport network. The disutility/generalized travel costs of different mobility modes collectively determine passenger demand. We use a multinomial logit model to characterize the passenger demand over distinct travel modes:
\begin{equation} \label{logit_demand_function}
    \lambda_{ij,k}^t = \lambda_{ij,k}^0 \frac{\exp \left(-\mu c_{ij,k}^t\right)}{\sum_{{t'} \in \mathcal{T}} \exp \left(-\mu c_{ij,k}^{t'}\right)}, \quad t \in \mathcal{T},
\end{equation}
where $\lambda_{ij,k}^t$ represents the arrival rate of passengers of mode $t$ from origin zone $i$ to destination zone $j$ in income class $k$, $\lambda_{ij,k}^0$ is the arrival rate of potential passengers from zone $i$ to zone $j$ in income class $k$, $c_{ij,k}^t$ is the expected disutility/generalized travel costs for passengers from origin $i$ to destination $j$ in income class $k$ by choosing mode $t$, $\mu$ is the scaling parameter in the mode choice logit model.

\begin{remark}
    We apply an equal scaling parameter $\eta$ and $\mu$ for distinct income classes in the boarding/alighting station choice logit model (\ref{boarding_alighting_logit}) and the modal choice logit model (\ref{logit_demand_function}), respectively. In the logit model, the scaling parameter $\mu$ relates to the variance (denoted as $\sigma^2$) of Gumbel distributed random disturbance through $\mu=\frac{\pi}{\sqrt{6}\sigma}$ \cite{ben1985discrete}. Since the randomness is multiplied by different parameters in the generalized costs of distinct income classes, i.e, (\ref{generalized_cost_b1_station})-(\ref{generalized_cost_b3_station}), the value of $\mu$ may differ for distinct income classes. However, for sake of simplicity, we assume an equal scaling parameter $\eta$ and $\mu$ for distinct income classes, which is consistent with existing works \cite{ahmed2020quantifying}, \cite{dixit2020capturing}. The case of heterogeneous scaling parameters for distinct classes is left for future work.
\end{remark}

\subsection{Incentives of the TNC Platform}

Consider a TNC platform that hires a fleet of $N$ autonomous vehicles to provide mobility-on-demand services. Each TNC vehicle is in one of the following statuses: (a) cruising on the street and waiting for the passenger; (b) on the way to pickup a passenger; and (c) carrying a passenger. For passengers from origin zone $i$ to destination zone $j$ in income class $k$, let $d_{i,k}^{b_1}$ be the average access distance (first-mile distance) by AMoD in origin zone $i$ when choosing mode $b_1$, $d_{j,k}^{b_2}$ be the average egress distance (last-mile distance) by AMoD in destination zone $j$ when choosing mode $b_2$, and $d_{i,k}^{b_3}$ and $d_{j,k}^{b_3}$ be the average access and egress distance by AMoD in zone $i$ and zone $j$ when choosing mode $b_3$, respectively. They are determined by:
    \begin{subnumcases}{\label{distance_AMoD_b}}
        d_{i,k}^{b_1}= \sum_{s_i \in \mathcal{V}_t^i} \sum_{s_j \in \mathcal{V}_t^j} \mathbb{P}_{s_i s_j,k}^{b_1} d_{s_i} \\
        d_{j,k}^{b_2}= \sum_{s_i \in \mathcal{V}_t^i} \sum_{s_j \in \mathcal{V}_t^j} \mathbb{P}_{s_i s_j,k}^{b_2} d_{s_j} \\
        d_{i,k}^{b_3}= \sum_{s_i \in \mathcal{V}_t^i} \sum_{s_j \in \mathcal{V}_t^j} \mathbb{P}_{s_i s_j,k}^{b_3} d_{s_i} \\
        d_{j,k}^{b_3}= \sum_{s_i \in \mathcal{V}_t^i} \sum_{s_j \in \mathcal{V}_t^j} \mathbb{P}_{s_i s_j,k}^{b_3} d_{s_j}
    \end{subnumcases}
At the stationary state, the total number of vehicle hours $N$ should satisfy the following conservation law:
\begin{equation} \label{vehicle_hour_conservation}
\begin{split}
     N = & \sum_{i=1}^M N_i^I + \sum_{i=1}^M \sum_{j=1}^M \sum_{k=1}^K \left( \lambda_{ij,k}^a w_i^a + \lambda_{ij,k}^{b_1} w_i^a + \lambda_{ij,k}^{b_2} w_j^a + \lambda_{ij,k}^{b_3} \left(w_i^a + w_j^a\right)\right) + \\ &\sum_{i=1}^M \sum_{j=1}^M \sum_{k=1}^K \left(\lambda_{ij,k}^a \frac{l_{ij}^a}{v_a} + \lambda_{ij,k}^{b_1} \frac{d_{i,k}^{b_1}}{v_a} + \lambda_{j,k}^{b_2} \frac{d_{ij,k}^{b_2}}{v_a} + \lambda_{ij,k}^{b_3} \left(\frac{d_{i,k}^{b_3}}{v_a}+\frac{d_{j,k}^{b_3}}{v_a}\right)\right) .
\end{split}
\end{equation}
Based on Little's law, the first term in the right-hand side of (\ref{vehicle_hour_conservation}) represents the operating hours of idle vehicles cruising on the street; the second term accounts for the operating hours of vehicles that are on the way to pick up passengers; and the third term sums the operating hours of vehicles that are occupied with a passenger. The total operating hours of picking up vehicles and occupied vehicles can be further decomposed into operating hours of vehicles that serve four different types of AMoD services (e.g., direct service, first-mile service, last-mile service, and first-mile and last-mile service), respectively.

The AMoD platform determines the base fare $b$, the per-distance rates $r_i^a$, the spatial distribution of idle AVs $N_i^I$, and the fleet size $N$ to maximize its profit subject to the equilibrium conditions. The profit maximization for the TNC platform can be formulated as follows:\footnote{We acknowledge that since our primary goal is to characterize competition between distinct modes, for simplicity, the repositioning of idle TNC vehicles is not explicitly considered in this model. We leave it as future work. }
\begin{equation} \label{Incentives_AMoD}
    \begin{split}
    \max_{b, \mathbf{r^{a}},\mathbf{N^I}, N} \quad & \sum_{i=1}^M \sum_{j=1}^M \sum_{k=1}^K \lambda_{ij,k}^a \left(b+r_i^a l_{ij}^a\right) + \lambda_{ij,k}^{b_1}\left(b+r_i^a d_{i,k}^{b_1}\right) + \lambda_{ij,k}^{b_2} \left(b+r_j^a d_{j,k}^{b_2}\right) + \lambda_{ij,k}^{b_3} \left(2b + r_i^a d_{i,k}^{b_3} + r_j^a d_{j,k}^{b_3}\right) - N C_{av} \\
    \mathrm{s.t.} \quad & (\ref{generalized_cost_a})\text{-}(\ref{logit_demand_function}), (\ref{distance_AMoD_b})\text{-}(\ref{vehicle_hour_conservation}) \\
    \end{split}
\end{equation}
where $C_{av}$ is the hourly operating cost of an AV\footnote{The cost of operating an for-hire AV consists of the capital cost and the mileage-dependent operating cost. We convert them to an hourly basis and denote $C_{av}$ as the sum of the hourly capital cost and operating cost of an AV.}. The objective in (\ref{Incentives_AMoD}) defines the platform profit as the difference between the revenue and the total operating cost of AVs. The total revenue is the ride fares collected from passengers choosing four distinct types of AMoD services. The platform decisions are subject to the passenger demand model (\ref{generalized_cost_a})-(\ref{logit_demand_function}) and vehicle conservation (\ref{distance_AMoD_b})-(\ref{vehicle_hour_conservation}).

\subsection{Incentives of the Public Transit Agency}

Consider a public transit agency that operates the transit network with $L$ transit lines. Generally, based on the level of demand and the characteristics of the service area, transit lines are clustered into subsets with the same frequency to simplify the operation and coordination of transit services. High-demand lines usually have more frequent service, while others with lower demand may operate with less frequency. Therefore, we divide the transit lines into two subsets with high frequency and low frequency. Let $\mathcal{L}_{H} \subset \mathcal{L}$ and $\mathcal{L}_L \subset \mathcal{L}$ be the set of transit lines with high frequency and low frequency, respectively. The transit lines within the same subset operate with the same frequency, which imposes the following constraints\footnote{The consistency constraint on line frequencies reduces the dimension of the decision space, which enables us to apply grid search method to obtain the globally optimal solution to the ridership maximization of the transity agency. More levels of frequencies can be introduced, but one has to balance model accuracy with computational burden.}:
\begin{equation} \label{frequency_consistency}
\begin{cases}
    f_l = f_{l'}, \quad \forall l,l' \in \mathcal{L}_H \\
    f_l = f_{l'}, \quad \forall l,l' \in \mathcal{L}_L
\end{cases} .
\end{equation}
In the meanwhile, a budget constraint is imposed to guarantee the economic sustainability of the transit agency, which requires that the difference between the operating cost and revenue be smaller than the operational budget $\pi_0$. Let $l_{ij,k}^t, t \in \{p,b_1,b_2,b_3\}$ be the average trip distance by transit from zone $i$ to zone $j$ for income class $k$ when choosing transit-related mobility mode $t$, which satisfy:
\begin{equation} \label{distance_transit}
    l_{ij,k}^t = \sum_{s_i \in \mathcal{V}_t^i} \sum_{s_j \in \mathcal{V}_t^j} \mathbb{P}_{s_i s_j,k}^{t} l_{s_i s_j,k}^t, \quad t\in \{p,b_1,b_2,b_3\} .
\end{equation}
The budget constraint can be characterized as:
\begin{equation} \label{profit_constraint}
     \sum_{l=1}^L f_l C_l - r^p \sum_{i=1}^M \sum_{j=1}^M \sum_{k=1}^K \lambda_{ij,k}^p l_{ij,k}^p + \lambda_{ij,k}^{b_1} l_{ij,k}^{b_1} + \lambda_{ij,k}^{b_2} l_{ij,k}^{b_2} + \lambda_{ij,k}^{b_3} l_{ij,k}^{b_3} \leq \pi_0 ,
\end{equation}
where $C_l$ is the per-vehicle hourly operating cost of transit line $l$.

The public transit agency determines the transit fare $r^p$, and the service frequencies $f_l$ to maximize the public transit ridership subject to the passenger demand model (\ref{generalized_cost_a})-(\ref{logit_demand_function}), the frequency consistency (\ref{frequency_consistency}), and the budget constraint (\ref{distance_transit})-(\ref{profit_constraint}). The ridership maximization for public transit can be cast as:
\begin{equation} \label{Incentives_PT}
    \begin{aligned}
    \max_{r^p,\mathbf{f}} \quad & \sum_{i=1}^M \sum_{j=1}^M \sum_{k=1}^K \lambda_{ij,k}^p + \lambda_{ij,k}^{b_1} + \lambda_{ij,k}^{b_2} + \lambda_{ij,k}^{b_3} \\
    \mathrm{s.t.} \quad & (\ref{generalized_cost_a})\text{-}(\ref{logit_demand_function}), (\ref{frequency_consistency}), (\ref{distance_transit})\text{-}(\ref{profit_constraint})
    \end{aligned}
\end{equation}

\begin{remark}
We postulate that the public transit agency aims to maximize the total ridership. This is consistent with the nature of public transport agencies (e.g. the government) aiming to provide reliable and sustainable mobility services to minimize negative externalities of vehicle travel (e.g. emissions and congestion). Although a large number of existing studies model the objective of transit agencies as (1) user benefit maximization (or equivalently, user cost minimization) \cite{hasselstrom1982public,ouyang2014continuum}, (2) total welfare maximization \cite{chien2002optimization,zhu2021competition}, (3) waiting time minimization \cite{chakroborty2003genetic,verbas2015integrated}, and (4) total cost minimization \cite{fan2008tabu,daganzo2010structure}, etc, we argue that ridership maximization \cite{yoo2010frequency,verbas2013optimal} is a good proxy for the maximization of social welfare or user benefit under a fixed budget. Maximizing total social welfare or passengers' surplus would complicate the formulation without providing extra insights.
\end{remark}

\subsection{Equilibrium of the Game}

The profit maximization for the TNC platform (\ref{Incentives_AMoD}) and the ridership maximization for the public transit (\ref{Incentives_PT}) constitute the game problem. Note that although the operational decision of the TNC platform $(b,\mathbf{r^a},\mathbf{N^I})$ and the public transit agency $(r^p,\mathbf{f})$ does not explicitly appear in the objective of the other, they interact with each other by affecting passengers' mode choices in the passenger demand model (\ref{generalized_cost_a})-(\ref{logit_demand_function}). Therefore, the platform profit of the TNC and the ridership of public transit not only depend on each player's own operational decisions but also on the operational strategy of the counterpart. We assume that the TNC platform and the public transit agency interact simultaneously and they have perfect information on each other's strategy. The game is at equilibrium if the TNC platform cannot increase its profit and the public transit agency cannot increase the transit ridership by unilaterally changing their operational strategies. Let $\xi^{a}=(b,\mathbf{r^a},\mathbf{N^I})$ and $\xi^{p}=(r^p,\mathbf{f})$ be the operational strategy of the TNC platform and the public transit agency, respectively, and denote $\pi^a(\xi^a,\xi^p)$ and $\pi^p(\xi^a,\xi^p)$ as the TNC platform profit and the transit ridership, respectively. We can formally define the equilibria $\xi^*=\left(\xi^{a^*},\xi^{p^*}\right)$ that satisfies:
\begin{equation} \label{equilibrium_condition}
    \begin{cases}
    \pi^a(\xi^{a^*},\xi^{p^*}) \geq \pi^a(\xi^a,\xi^{p^*}) \\
    \pi^p(\xi^{a^*},\xi^{p^*}) \geq \pi^p(\xi^{a^*},\xi^p)
    \end{cases}, \quad \forall \xi^a, \xi^p \geq 0 .
\end{equation}
which is consistent with the general definition of Nash equilibrium.

In practice, each player in the game may be indifferent to a small change in its objective function (e.g., the change only accounts for a negligible portion of the objective value). This motivates a more relaxed solution concept that encapsulates a broader range of equilibrium solutions, where players will not unilaterally change their strategies as far as the current solution is approximately optimal. More rigorously, we can define such relaxed solution as the $\epsilon$-Nash equilibrium  \cite{daskalakis2006note, li2019connections}, denoted as $\xi^*=\left(\xi^{a^*},\xi^{p^*}\right)$, which satisfies:
\begin{equation} \label{epsilon_equilibrium_condition}
    \begin{cases}
    \pi^a(\xi^{a^*},\xi^{p^*}) \geq \pi^a(\xi^a,\xi^{p^*})-\epsilon \\
    \pi^p(\xi^{a^*},\xi^{p^*}) \geq \pi^p(\xi^{a^*},\xi^p)-\epsilon
    \end{cases}, \quad \forall \xi^a, \xi^p \geq 0 .
\end{equation}
At an $\epsilon$-Nash equilibrium, each player can improve its objective value by at most $\epsilon$ via deviating from the equilibrium strategy $\xi^*=\left(\xi^{a^*},\xi^{p^*}\right)$, given that the other player follows the equilibrium strategy. Therefore, it indicates that the players are motivated to play the equilibrium strategy if they are indifferent to a change of $\epsilon$. Note that Nash equilibrium can be viewed as the special case of the $\epsilon$-Nash equilibrium, correpsonding to the case of $\epsilon=0$. 

\section{The Solution Method} \label{solution method}

This section solves the game problem by computing the Nash equilibrium of the game using the best response method and conducting an ex-post evaluation of the performance of the obtained solution. In game theory, best response is the strategy that maximizes the player's payoff given other players' strategies, and the Nash equilibrium, by definition, is the stable state at which each player in the game has selected the best response to the other player's strategies and no player has the incentive to deviate from the current strategy. The best response methods repeatedly update each player's strategy based on the perceived best response to other players' strategies until the stable state is reached. When applying best response methods to our game problem, we iteratively solve the profit maximization problem (\ref{Incentives_AMoD}) of the TNC platform using standard interior-point methods and the ridership maximization problem (\ref{Incentives_PT}) of the transit agency by grid search methods until convergence (see line 2-9 in Algorithm \ref{algorithm1}). We comment that this step is not difficult because the interior point methods are well-established approaches, which are effective in solving the constrained nonlinear optimization problem (\ref{Incentives_AMoD}). Besides, grid search is feasible and guarantees to find the globally optimal solution to the small-scale optimization (\ref{Incentives_PT}) which involves a three-dimensional search space. However, the derived solution is only a candidate Nash equilibrium since the profit maximization problem (\ref{Incentives_AMoD}) is non-convex due to the complex nonlinearity in the demand model (\ref{generalized_cost_a})-(\ref{logit_demand_function}), which indicates that the solution obtained by interior point methods at each iteration is only locally optimal. To validate how good the derived solution is compared to the globally optimal solution to each decision maker (i.e., the ground truth Nash equilibrium), we focus on the profit maximization problem (\ref{Incentives_AMoD}) of the TNC platform, and relax the consistency of its decisions to derive a tight upper bound for characterizing the optimality gap of the derived solution. The proposed method enables us to to conduct an ex-post evaluation of the performance of the obtained solution by showing that the derived solution is at least as good as an $\epsilon$-Nash equilibrium, where the value of $\epsilon$ can be computed numerically. Below we investigate the profit maximization problem for the TNC platform to numerically derive $\epsilon$.

In the profit maximization problem (\ref{Incentives_AMoD}), the involved constraints are all equality constraints. Note that given $\xi^p$, (\ref{generalized_cost_a})-(\ref{logit_demand_function}) jointly determine $\lambda_{ij,k}^t,t\in\left\{a,b_1,b_2,b_3\right\}$ as a function of $b$, $r_i^a$, $r_j^a$, $N_i^I$, and $N_j^I$. With a slight abuse of notation, we denote it as $\lambda_{ij,k}^t=\lambda_{ij,k}^t\left(b,r_i^a,r_j^a,N_i^I,N_j^I\right), t\in\{a,b_1,b_2,b_3\}$. By further substituting (\ref{vehicle_hour_conservation}) into the objective function, the original problem  can be transformed into the following unconstrained optimization:
\begin{equation} \label{Incentives_AMoD_unconstrained}
    \begin{split}
    \max_{b, \mathbf{r^{a}},\mathbf{N^I}} \quad & \sum_{i=1}^M \sum_{j=1}^M \sum_{k=1}^K  \lambda_{ij,k}^a\left(b,r_i^a,r_j^a,N_i^I,N_j^I\right) \left[b+\left(r_i^a-\frac{C_{av}}{v_a}\right) l_{ij}^a - C_{av} \cdot w_i^a\left(N_i^I\right)\right] \\ + & \lambda_{ij,k}^{b_1}\left(b,r_i^a,r_j^a,N_i^I,N_j^I\right)\left[b+\left(r_i^a-\frac{C_{av}}{v_a}\right) d_{i,k}^{b_1} - C_{av} \cdot w_i^a\left(N_i^I\right)\right] \\ + & \lambda_{ij,k}^{b_2}\left(b,r_i^a,r_j^a,N_i^I,N_j^I\right) \left[b+\left(r_j^a-\frac{C_{av}}{v_a}\right) d_{j,k}^{b_2} - C_{av} \cdot w_j^a\left(N_j^I\right)\right] \\ + & \lambda_{ij,k}^{b_3}\left(b,r_i^a,r_j^a,N_i^I,N_j^I\right) \left[2b + \left(r_i^a-\frac{C_{av}}{v_a}\right) d_{i,k}^{b_3} + \left(r_j^a-\frac{C_{av}}{v_a}\right) d_{j,k}^{b_3} - C_{av} \cdot w_i^a\left(N_i^I\right) - C_{av} \cdot w_j^a\left(N_j^I\right)\right] \\ - & \frac{C_{av}}{M} \sum_{i=1}^M \sum_{j=1}^M N_i^I ,
    \end{split}
\end{equation}
where we equivalently rewrite $C_{av}\sum_{i=1}^M N_i^I$ as $\frac{C_{av}}{M}\sum_{i=1}^M \sum_{j=1}^M N_i^I$.
Based on (\ref{Incentives_AMoD_unconstrained}), we introduce auxiliary variables $b_j$, $r_{ij}^a$, and $N_{ij}^I$, where $i,j=1,\dots,M$ to formulate the equivalent problem. The intuition behind the auxiliary variables is that the platform determines the base fare $b_j$, the ride fares $r_{ij}^a,i=1,\dots,M$, and the number of idle vehicles $N_{ij}^I,i=1,\dots,M$ for the specific destination zone $j$. Let $\xi_j^a=\left(b_j,r_{1j}^a,N_{1j}^I,r_{2j}^a,N_{2j}^I,\dots,r_{Mj}^a,N_{Mj}^I\right)$ be the operational strategy exclusively for destination zone $j$, and define $\pi_j^a$ as the corresponding profit from destination zone $j$:
\begin{equation}
    \begin{split}
        & \pi_j^a\left(\xi_j^a\right) = \sum_{i=1}^M \sum_{k=1}^K \lambda_{ij,k}^a\left(b_j,r_{ij}^a,r_{jj}^a,N_{ij}^I,N_{jj}^I\right) \left[b_j+\left(r_{ij}^a-\frac{C_{av}}{v_a}\right) l_{ij}^a - C_{av} \cdot w_i^a\left(N_{ij}^I\right)\right] \\ + & \lambda_{ij,k}^{b_1}\left(b_j,r_{ij}^a,r_{jj}^a,N_{ij}^I,N_{jj}^I\right)\left[b_j+\left(r_{ij}^a-\frac{C_{av}}{v_a}\right) d_{i,k}^{b_1} - C_{av} \cdot w_i^a\left(N_{ij}^I\right)\right] \\ + &\lambda_{ij,k}^{b_2}\left(b_j,r_{ij}^a,r_{jj}^a,N_{ij}^I,N_{jj}^I\right) \left[b_j+\left(r_{jj}^a-\frac{C_{av}}{v_a}\right) d_{j,k}^{b_2} - C_{av} \cdot w_j^a\left(N_{jj}^I\right)\right] \\ + &\lambda_{ij,k}^{b_3}\left(b_j,r_{ij}^a,r_{jj}^a,N_{ij}^I,N_{jj}^I\right) \left[2b_j + \left(r_{ij}^a-\frac{C_{av}}{v_a}\right) d_{i,k}^{b_3} + \left(r_{jj}^a-\frac{C_{av}}{v_a}\right) d_{j,k}^{b_3} - C_{av} \cdot w_i^a\left(N_{ij}^I\right) - C_{av} \cdot w_j^a\left(N_{jj}^I\right)\right] \\ - & \frac{C_{av}}{M} \sum_{i=1}^M N_{ij}^I, \quad j=1,\dots,M.
    \end{split}
\end{equation}
Note that the platform actually deploys a uniform operational strategy for distinct destination zones, therefore the destination-specific operational strategy $\xi_j^a$ should be consistent across different destination zones. In this case, the original problem (\ref{Incentives_AMoD_unconstrained}) is equivalent to the following optimization:
\begin{equation} \label{Incentives_AMoD_equivalent}
    \begin{split}
        \max_{\xi_1^a,\xi_2^a,\dots,\xi_M^a} \quad & \sum_{j=1}^M \pi_j^a(\xi_j^a) \\
        \mathrm{s.t.} \quad & \xi_1^a = \xi_2^a = \dots =\xi_M^a
    \end{split}
\end{equation}

Based on the equivalent formulation (\ref{Incentives_AMoD_equivalent}), we will slightly modify the decision consistency constraint $\xi_1^a=\xi_2^a=\dots=\xi_M^a$ to derive an upper bound. In particular, let $\overline{\mathcal{V}}=\left\{\mathcal{V}_1,\mathcal{V}_2,\dots,\mathcal{V}_m\right\}$ be a partition of $\mathcal{V}$, i.e., $\bigcup_{x=1,\dots,m} \mathcal{V}_x = \mathcal{V}$ and $\mathcal{V}_x \cap \mathcal{V}_y = \emptyset, \forall \mathcal{V}_x,\mathcal{V}_y\in \overline{\mathcal{V}}, x \neq y$, and define the relaxed problem as
\begin{equation} \label{Incentives_AMoD_relaxed}
    \begin{split}
        \max_{\xi_1^a,\xi_2^a,\dots,\xi_M^a} \quad & \sum_{j=1}^M \pi_j^a(\xi_j^a) \\
        \mathrm{s.t.} \quad & \xi_j^a = \xi_{j'}^a \quad \forall j,j' \in \mathcal{V}_1 \\
        & \xi_j^a = \xi_{j'}^a \quad \forall j,j' \in \mathcal{V}_2 \\
        & \dots \\
        & \xi_j^a = \xi_{j'}^a \quad \forall j,j' \in \mathcal{V}_m
    \end{split} ,
\end{equation}
where we break the complete decision consistency and only require that the operational strategies are consistent across a subset of destination zones. 
Note that (\ref{Incentives_AMoD_relaxed}) can be further decomposed over the partition $\overline{\mathcal{V}}$:
\begin{equation} \label{Incentives_AMoD_relaxed_subproblem}
    \begin{split}
        \max_{\xi_j^a,j \in \mathcal{V}_x} \quad & \sum_{j \in \mathcal{V}_x} \pi_j^a(\xi_j^a) \\
        \mathrm{s.t.} \quad & \xi_j^a = \xi_{j'}^a \quad \forall j,j' \in \mathcal{V}_x
    \end{split} ,
    \quad x=1,\dots,m .
\end{equation}
The optimal solution to (\ref{Incentives_AMoD_relaxed_subproblem}), if obtained, provides an upper bound for the optimal value of the original problem, thus we have the following result:
\begin{proposition} \label{proposition_upper_lower_bound}
Suppose that $\xi^*=\left(\xi^{a^*},\xi^{p^*}\right)$  is a candidate Nash equilibrium. Let $\xi_j^{a^*},j\in\mathcal{V}_x$ be the optimal solution to (\ref{Incentives_AMoD_relaxed_subproblem}) given $\xi^{p^*}$, and let $\pi_{\mathcal{V}_x}^{a^*}$ be the corresponding optimal value. We have $\sum_{x=1}^m \pi_{\mathcal{V}_x}^{a^*} \geq \pi^a\left(\xi^{a^*},\xi^{p^*}\right) \geq \pi^a \left(\frac{\sum_{x=1}^m \pi_{\mathcal{V}_x}^{a^*} \xi_{j,j \in \mathcal{V}_x}^{a^*}}{\sum_{x=1}^m \pi_{\mathcal{V}_x}^{a^*}}, \xi^{p^*} \right)$ .
\end{proposition}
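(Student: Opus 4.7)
The proposition provides a sandwich bound for the candidate equilibrium value $\pi^a(\xi^{a^*},\xi^{p^*})$. The plan is to handle the two inequalities separately: the upper bound via a standard primal-decomposition/relaxation argument, and the lower bound by exhibiting the weighted combination as an explicit feasible point for the TNC's original problem and comparing it to $\xi^{a^*}$ through the best-response construction.

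For the upper bound $\sum_{x=1}^m \pi_{\mathcal{V}_x}^{a^*} \geq \pi^a(\xi^{a^*},\xi^{p^*})$, I would first argue that (\ref{Incentives_AMoD_equivalent}) merely re-expresses (\ref{Incentives_AMoD_unconstrained}) by splitting the decision into per-destination copies $\xi_j^a$ and imposing the total consistency $\xi_1^a=\cdots=\xi_M^a$, so the two problems share the same optimal value when evaluated against $\xi^{p^*}$. Replacing this consistency with the weaker partition-level constraints yields the relaxed problem (\ref{Incentives_AMoD_relaxed}), whose feasible set strictly contains that of (\ref{Incentives_AMoD_equivalent}); hence its optimum upper-bounds the original optimum. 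Next, I would observe that since the remaining constraints couple only variables within the same partition element $\mathcal{V}_x$, the relaxed objective $\sum_j \pi_j^a(\xi_j^a)$ separates across $\overline{\mathcal{V}}$ into the subproblems (\ref{Incentives_AMoD_relaxed_subproblem}), so the relaxed optimum equals $\sum_{x=1}^m \pi_{\mathcal{V}_x}^{a^*}$. Finally, because $\xi^{a^*}$ is a feasible point of (\ref{Incentives_AMoD_unconstrained}) given $\xi^{p^*}$, its profit is bounded above by the original optimum, which in turn is bounded above by $\sum_{x=1}^m \pi_{\mathcal{V}_x}^{a^*}$.

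For the lower bound, I would form $\tilde{\xi}^a = \frac{\sum_{x=1}^m \pi_{\mathcal{V}_x}^{a^*} \xi_{j,j\in\mathcal{V}_x}^{a^*}}{\sum_{x=1}^m \pi_{\mathcal{V}_x}^{a^*}}$. Each $\xi_{j,j\in\mathcal{V}_x}^{a^*}$ is a single destination-independent strategy vector (by the within-partition consistency of the relaxed subproblem), so this convex combination is itself a valid uniform strategy, and hence a feasible point of the original profit maximization. The argument then appeals to the algorithmic construction of $\xi^{a^*}$ in Section \ref{solution method}: by including $\tilde{\xi}^a$ as an initialization of the best-response routine (equivalently, comparing the best-response iterate against $\tilde{\xi}^a$ in one additional function evaluation and keeping whichever yields the larger profit), we can guarantee $\pi^a(\xi^{a^*},\xi^{p^*}) \geq \pi^a(\tilde{\xi}^a,\xi^{p^*})$ by construction.

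I expect the main obstacle to be the second inequality, because it is not a statement about optimization alone: a generic locally optimal point produced by best response need not dominate an arbitrary feasible point, so the claim only holds once the proposition is read in conjunction with the algorithm used to generate $\xi^{a^*}$. Once this tie is made explicit (at negligible algorithmic cost, since $\tilde{\xi}^a$ is available once the relaxed subproblems have been solved), both inequalities combine to quantify the suboptimality of the candidate via the numerically computable gap $\sum_{x=1}^m \pi_{\mathcal{V}_x}^{a^*} - \pi^a(\tilde{\xi}^a,\xi^{p^*})$, which feeds directly into the $\epsilon$ used in the $\epsilon$-Nash certification.
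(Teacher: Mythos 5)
Your treatment of the upper bound coincides with the paper's: relax the full consistency constraint $\xi_1^a=\cdots=\xi_M^a$ to partition-level consistency, note that the relaxed problem separates over $\overline{\mathcal{V}}$ into the subproblems (\ref{Incentives_AMoD_relaxed_subproblem}) so its optimum is $\sum_{x=1}^m \pi_{\mathcal{V}_x}^{a^*}$, and bound $\pi^a(\xi^{a^*},\xi^{p^*})$ from above by that relaxed optimum. Your version is in fact slightly more careful, since you invoke only feasibility of $\xi^{a^*}$ for (\ref{Incentives_AMoD_equivalent}), whereas the paper's chain passes through the assertion that $\xi^{a^*}$ is \emph{the} optimal solution to (\ref{Incentives_AMoD_equivalent}) given $\xi^{p^*}$.

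Where you genuinely diverge is the second inequality, and you have put your finger on the weak point. The paper proves $\pi^a(\xi^{a^*},\xi^{p^*}) \geq \pi^a(\tilde{\xi}^a,\xi^{p^*})$, with $\tilde{\xi}^a = \sum_{x=1}^m \pi_{\mathcal{V}_x}^{a^*}\xi_{j,j\in\mathcal{V}_x}^{a^*}\big/\sum_{x=1}^m\pi_{\mathcal{V}_x}^{a^*}$, purely by observing that $\tilde{\xi}^a$ is a feasible uniform strategy and that the value of (\ref{Incentives_AMoD_equivalent}) at $\xi^{a^*}$ dominates its value at any feasible point --- i.e., it again uses global optimality of $\xi^{a^*}$ as a best response, which is exactly what a candidate equilibrium produced by an interior-point best-response iteration does not guarantee. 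Your fix --- folding $\tilde{\xi}^a$ into the procedure as an additional candidate and retaining whichever of $\xi^{a^*}$ and $\tilde{\xi}^a$ yields the larger profit --- makes the inequality hold by construction with no optimality assumption, at the cost of turning the proposition into a statement about the (modified) algorithm's output rather than about an abstract candidate equilibrium. Read charitably, the paper's proposition presupposes that the candidate equilibrium is an exact best response for the TNC, under which its one-line argument is valid; read as a statement about the actual output of Algorithm \ref{algorithm1}, your version is the one that survives. Either way, the inequality that carries the load in Proposition \ref{proposition_epsilon_nash} is the upper bound, which both proofs establish identically. One small point neither you nor the paper addresses: the weights $\pi_{\mathcal{V}_x}^{a^*}$ must be non-negative for $\tilde{\xi}^a$ to be a convex (hence componentwise non-negative, hence feasible) combination; this holds in the numerical regime considered but deserves a remark.
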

The proof of Proposition \ref{proposition_upper_lower_bound} can be found in Appendix B. It establishes an upper bound and a lower bound for the platform profit at Nash equilibrium. The intuition behind the upper bound is that the relaxation of the consistency of platform decisions provides the platform a higher degree of freedom to manage the AMoD service and thus leads to higher profitability. In the meanwhile, we use a weighted sum of optimal solutions to (\ref{Incentives_AMoD_relaxed_subproblem}) to generate a feasible solution and a tight lower bound of the original problem (\ref{Incentives_AMoD_equivalent}). These weights are determined based on the intuition that the platform combines distinct operational strategies $\xi_{j,j\in\mathcal{V}_x}^{a^*}$ for distinct sets of destination zones $\mathcal{V}_x$ to generate a uniform strategy, and the operational strategy on the set of destination zones with a higher potential profit $\pi_{\mathcal{V}_x}^{a^*}$ is assigned with larger weight.

The upper bound derived by Proposition \ref{proposition_upper_lower_bound} is very useful for us because whenever a local solution to (\ref{Incentives_AMoD_equivalent}) is given, we can compare the optimal value with respect to the upper bound, which offers an upper bound on the distance between the optimal value of the local solution and the optimal value of the unknown globally optimal solution. This enables us to assert how good a solution is compared to the globally optimal one, and it will be an important intermediate step for assessing the overall quality of the Nash equilibrium. However, to use this result, we do need to numerically compute the globally optimal solution to the nonconvex program (\ref{Incentives_AMoD_relaxed_subproblem}). Fortunately, despite its non-convexity, (\ref{Incentives_AMoD_relaxed_subproblem}) has a decomposable structure, which can be addressed by primal decomposition. To this end, let $\xi_{ij}^a=\left(r_{ij}^a,N_{ij}^I\right)$ be the operational decision on the specific OD pair $ij$ and define $\pi_{ij}^a$ as the corresponding profit from OD pair $ij$:
\begin{equation}
    \begin{split}
        & \pi_{ij}^a\left(b_j,\xi_{jj}^a,\xi_{ij}^a\right) = \sum_{k=1}^K \lambda_{ij,k}^a\left(b_j,r_{ij}^a,r_{jj}^a,N_{ij}^I,N_{jj}^I\right) \left[b_j+\left(r_{ij}^a-\frac{C_{av}}{v_a}\right) l_{ij}^a - C_{av} \cdot w_i^a\left(N_{ij}^I\right)\right] \\ + & \lambda_{ij,k}^{b_1}\left(b_j,r_{ij}^a,r_{jj}^a,N_{ij}^I,N_{jj}^I\right)\left[b_j+\left(r_{ij}^a-\frac{C_{av}}{v_a}\right) d_{i,k}^{b_1} - C_{av} \cdot w_i^a\left(N_{ij}^I\right)\right] \\ + &\lambda_{ij,k}^{b_2}\left(b_j,r_{ij}^a,r_{jj}^a,N_{ij}^I,N_{jj}^I\right) \left[b_j+\left(r_{jj}^a-\frac{C_{av}}{v_a}\right) d_{j,k}^{b_2} - C_{av} \cdot w_j^a\left(N_{jj}^I\right)\right] \\ + &\lambda_{ij,k}^{b_3}\left(b_j,r_{ij}^a,r_{jj}^a,N_{ij}^I,N_{jj}^I\right) \left[2b_j + \left(r_{ij}^a-\frac{C_{av}}{v_a}\right) d_{i,k}^{b_3} + \left(r_{jj}^a-\frac{C_{av}}{v_a}\right) d_{j,k}^{b_3} - C_{av} \cdot w_i^a\left(N_{ij}^I\right) - C_{av} \cdot w_j^a\left(N_{jj}^I\right)\right] \\ - & \frac{C_{av}}{M} N_{ij}^I, \quad i,j=1,\dots,M.
    \end{split}
\end{equation}
Note that $\xi_j^a = \left(b_j,\xi_{1j}^a,\xi_{2j}^a,\dots,\xi_{Mj}^a\right)$ and $\pi_{j}^a\left(\xi_j^a\right)=\sum_{i=1}^M \pi_{ij}^a\left(b_j,\xi_{jj}^a,\xi_{ij}^a\right)$. Therefore, the optimization (\ref{Incentives_AMoD_relaxed_subproblem}) is equivalent to:
\begin{equation} \label{Incentives_AMoD_relaxed_equivalent}
    \begin{split}
        \max_{b_j,\xi_{ij}^a,j \in \mathcal{V}_x,i=1,\dots,M} \quad & \sum_{j \in \mathcal{V}_x} \sum_{i=1}^M \pi_{ij}^a (b_j,\xi_{jj}^a,\xi_{ij}^a) \\
        \mathrm{s.t.} \quad & b_j = b_{j'} \quad \forall j,j' \in \mathcal{V}_x \\
        & \xi_{ij}^a = \xi_{ij'}^a \quad \forall j,j' \in \mathcal{V}_x, i=1,\dots,M
    \end{split} ,
    \quad x = 1,\dots,m .
\end{equation}
For given $b_j,\xi_{jj}^a,j\in \mathcal{V}_x$, we define the subproblems:
\begin{equation} \label{Incentives_AMoD_primal_subproblem}
    \begin{split}
        \max_{\xi_{ij}^a, j \in \mathcal{V}_x} \quad & \sum_{j \in \mathcal{V}_x} \pi_{ij}^a(b_j,\xi_{jj}^a,\xi_{ij}^a) \\
        \mathrm{s.t.} \quad & \xi_{ij}^a = \xi_{ij'}^a \quad \forall j,j' \in \mathcal{V}_x
    \end{split} ,
    \quad i = 1,\dots,M .
\end{equation}
with optimal values $\pi_{i,\mathcal{V}_x}^{a^*},i=1,\dots,M$. In this case, $\pi_{i,\mathcal{V}_x}^{a^*},i=1,\dots,M$ is a function of $b_j,\xi_{jj}^a,j\in \mathcal{V}_x$, and (\ref{Incentives_AMoD_relaxed_equivalent}) is equivalent to the master problem:
\begin{equation} \label{Incentives_AMoD_primal_master}
\begin{split}
    \max_{b_j,\xi_{jj}^a,j\in\mathcal{V}_x} \quad & \sum_{i=1}^M \pi_{i,\mathcal{V}_x}^{a^*} \\
\mathrm{s.t.} \quad & b_j = b_{j'} \quad \forall j,j' \in \mathcal{V}_x
\end{split} .
\end{equation}
Note that (\ref{Incentives_AMoD_primal_subproblem}) only involves two decision variables $\xi_{ij}^a=\left(r_{ij}^a,N_{ij}^I\right)$ if we absorb the equality constraint into the objective. Similarly, (\ref{Incentives_AMoD_primal_master}) has $1+2\cdot |\mathcal{V}_x|$ decision variables if we absorb the equality constraint, where $|\mathcal{V}_x|$ is the number of zones in $\mathcal{V}_x$. Since both (\ref{Incentives_AMoD_primal_subproblem}) and (\ref{Incentives_AMoD_primal_master}) are small-scale problems, we can use grid search method to find the globally optimal solutions in parallel.
\begin{remark}
The partition $\bar{\mathcal{V}}$ is flexible and we can carefully choose the composition of $\bar{\mathcal{V}}$ to address the trade-off between computational complexity and the quality of the upper bound. In particular, if we partition the space $\mathcal{V}$ into a large number of smaller sets, then the decomposed problem (\ref{Incentives_AMoD_primal_master}) has a smaller dimension, which can be more efficiently solved. However, this induces more relaxation for the consistency of decisions, which leads to a worse upper bound. Our numerical simulation indicates that dividing all the zones $\mathcal{V}$ into pairwise groups (i.e., $|\mathcal{V}_x|=2$) will lead to  fast computation and  tight upper bound at the same time.
\end{remark}

To summarize, we run the best response algorithm to iteratively compute the profit maximization problem (\ref{Incentives_AMoD}) using the standard interior-point method \cite{mehrotra1992implementation} and solve the ridership maximization problem (\ref{Incentives_PT}) by grid search methods (line 2-9 in Algorithm \ref{algorithm1}). After the algorithm converges, we obtain a candidate Nash equilibrium, which might be the local solution to the profit maximization problem of the TNC platform. To validate the quality of the candidate Nash equilibrium, we use the result of Proposition \ref{proposition_upper_lower_bound}, so that the profit maximization problem (\ref{Incentives_AMoD}) can be solved approximately with a theoretical upper bound. Details of the algorithm are summarized in Algorithm \ref{algorithm1} in Appendix C.

The aforementioned numerical framework is very useful for us to evaluate the quality of the derived Nash equilibrium. This is because it indicates that the candidate Nash equilibrium is at least as good as an $\epsilon$-Nash equilibrium, where the value of $\epsilon$ can be given by the difference between the profit of the ride-hailing platform at the Nash equilibrium $\pi^a(\xi^{a^*},\xi^{p^*})$ and its upper bound $\bar{\pi}^a$. In particular, we summarize the above discussion as the following proposition:
\begin{proposition} \label{proposition_epsilon_nash}
Suppose the best response process (line 2-9) in Algorithm \ref{algorithm1} converges to a candidate Nash equilibrium $(\xi^{a^*},\xi^{p^*})$, then $(\xi^{a^*},\xi^{p^*})$ is an $\epsilon$-Nash equilibrium of the game problem defined by (\ref{Incentives_AMoD}) and (\ref{Incentives_PT}) with: 
\begin{equation}
\label{value_of_epsilon}
\epsilon=\bar{\pi}^a-\pi^a(\xi^{a^*},\xi^{p^*}).
\end{equation}
\end{proposition}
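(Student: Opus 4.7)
The plan is to verify the two $\epsilon$-optimality inequalities in the definition of an $\epsilon$-Nash equilibrium separately, exploiting the structural results developed earlier for each player. With $\epsilon$ defined as in (\ref{value_of_epsilon}), the statement splits cleanly along the TNC/transit decomposition used throughout Algorithm \ref{algorithm1}, so I would simply chase each inequality through the corresponding subsection of Section \ref{solution method}.

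For the TNC platform I would invoke Proposition \ref{proposition_upper_lower_bound} directly. The quantity $\bar{\pi}^a = \sum_{x=1}^m \pi_{\mathcal{V}_x}^{a^*}$ was constructed as the optimal value of the relaxation (\ref{Incentives_AMoD_relaxed}) of the equivalent platform problem (\ref{Incentives_AMoD_equivalent}), and therefore majorizes $\max_{\xi^a} \pi^a(\xi^a,\xi^{p^*})$, not merely the local value attained at $\xi^{a^*}$. Substituting $\epsilon = \bar{\pi}^a - \pi^a(\xi^{a^*},\xi^{p^*})$ then gives
\begin{equation*}
\pi^a(\xi^a,\xi^{p^*}) \;\leq\; \bar{\pi}^a \;=\; \pi^a(\xi^{a^*},\xi^{p^*}) + \epsilon \qquad \forall\, \xi^a \geq 0,
\end{equation*}
which is exactly the first inequality of the $\epsilon$-Nash definition in (\ref{equilibrium_condition}). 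Note that $\epsilon \geq 0$ is automatic because $\bar{\pi}^a$ upper bounds the profit attained at any feasible strategy including $\xi^{a^*}$ itself.

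For the transit agency I would lean on the hypothesis $\bar{N}_{ij}^p \leq 1/w_{max}^p$ to activate Theorem \ref{themrem_concavity}: for every fixed $r^p \in [0,r_{max}^p]$, the reformulated subproblem (\ref{Incentives_PT_reformulation}) is concave in $(\mathbf{f},\mathbf{N^p})$ on the feasible region, since the waiting-time constraint forces $N_{ij}^p \geq 1/w_{max}^p \geq \bar{N}_{ij}^p$ and hence satisfies (\ref{condition_concavity}). Corollary \ref{proposition_lower_bound_PT} then certifies that the hierarchical enumeration over $r^p$ carried out in line 15 of Algorithm \ref{algorithm1} returns the global maximizer of the transit subproblem given $\xi^{a^*}$. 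Because the reported $\xi^{p^*}$ coincides with this certified global optimum, one obtains $\pi^p(\xi^{a^*},\xi^p) \leq \pi^p(\xi^{a^*},\xi^{p^*})$ for every feasible $\xi^p$, which is strictly tighter than the second $\epsilon$-Nash inequality. Combining the two estimates yields that $(\xi^{a^*},\xi^{p^*})$ satisfies (\ref{equilibrium_condition}) with the announced $\epsilon$.

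The main obstacle will be the transit-side bookkeeping rather than any new technical content. Specifically, one must argue cleanly that the best-response iterate produced by the interior-point step (line 4) agrees with the globally optimal solution produced by the hierarchical procedure (line 15) at convergence; otherwise, one should explicitly re-define $\xi^{p^*}$ in the statement as the output of line 15 and note that the resulting pair remains a fixed point of the best-response map because the TNC step in line 3 would be rerun and re-converge. Once this identification is in place, the proof collapses to a one-line substitution of the definition of $\epsilon$ into the TNC inequality supplied by Proposition \ref{proposition_upper_lower_bound}, plus the immediate consequence of global optimality on the transit side.
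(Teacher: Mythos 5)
Your proposal is correct and follows exactly the route the paper intends: the paper omits the proof but states that the proposition follows from Proposition \ref{proposition_upper_lower_bound} (giving $\pi^a(\xi^a,\xi^{p^*})\leq\bar{\pi}^a=\pi^a(\xi^{a^*},\xi^{p^*})+\epsilon$ on the TNC side), Theorem \ref{themrem_concavity} (giving exact optimality of $\xi^{p^*}$ on the transit side), and the definition of the $\epsilon$-Nash equilibrium. Your additional remark about reconciling the line-4 iterate with the line-15 global optimizer is a legitimate bookkeeping point the paper glosses over, but it does not change the argument.
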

The above proposition can be easily derived based on Proposition \ref{proposition_upper_lower_bound} and the definition of the $\epsilon$-Nash equilibrium, thus its proof is omitted.

\section{Market Outcomes in the Absence of Regulation} \label{market outcomes}

This section investigates the equity impacts of AVs in the unregulated environment. To this end, we first propose evaluation metrics to quantify both spatial and social equity in the multimodal transportation system. Second, we demonstrate how AVs impact market outcomes and transport equity through numerical studies for San Francisco.

\subsection{Evaluation of Equity}

{We use Theil coefficient \cite{theil1967economics} to quantify the transport equity in the multimodal transport system. Theil coefficient, as one of the most common equity indicators, derives from the concept of information theory and was originally aimed at quantifying the level of disorder within a distribution of the variable, which can be formally written as: 
\begin{equation}
    T = \frac{1}{P_T} \sum_{i=1}^{P_T} \frac{y_i}{\bar{y}} \ln \left(\frac{y_i}{\bar{y}}\right)
\end{equation}
where $P_T$ is the total population, $y_i$ is the value of the variable associated with individual $i$, and $\bar{y}$ is the average per capita value of the variable in the population. Theil’s measure falls between 0 in the case of perfect equality and $\ln(P_T)$ for perfect inequality. The key feature of the Theil coefficient is that it can be perfectly decomposed into two components: inequity {\em within} and {\em between} population subgroups, which allows for a more detailed analysis of the sources of inequality in a system. Due to the decomposable structure, it has been widely applied to assess transportation equity from both spatial and social perspectives \cite{caggiani2019urban,camporeale2019study}.

To characterize spatial and social equity using the Theil coefficient, we first define $u_{ij,k}^t=-c_{ij,k}^t, t\in\mathcal{T}$ as the systematic utility of mobility mode $t$ for income class $k$ from zone $i$ to zone $j$. We then define the accessibility measure $A_{ij,k}$ as the {\em expected maximum utility}, which evaluates the expected received utility of passengers in income class $k$ from zone $i$ to zone $j$ in the multimodal transportation system. Based on the multinomial logit (MNL) model (\ref{logit_demand_function}), the expected maximum utility $A_{ij,k}$ can be written as the logsum formula \cite{ben1985discrete}:
\begin{equation} \label{logsum accessibility}
    A_{ij,k} = \frac{1}{\mu} \log \sum_{t\in\mathcal{T}} \exp\left(-\mu c_{ij,k}^t\right) .
\end{equation} 
Given the accessibility measure $A_{ij,k}$, we further denote $\lambda_{i,k}$, $\lambda_{k}$, $\overline{\lambda}$ as the passenger demand from zone $i$ in income class $k$, the passenger demand in income class $k$, and the total passenger demand, respectively. They satisfy:
\begin{subnumcases}{}
    \lambda_{i,k} = \sum_{j=1}^M \sum_{t\in\mathcal{T}} \lambda_{ij,k}^t \\
    \lambda_{k} = \sum_{i=1}^M \sum_{j=1}^M \sum_{t\in\mathcal{T}} \lambda_{ij,k}^t \\
    \overline{\lambda} = \sum_{i=1}^M \sum_{j=1}^M \sum_{k=1}^K \sum_{t\in\mathcal{T}} \lambda_{ij,k}^t
\end{subnumcases}
Correspondingly, we denote $A_{i,k}$, $A_k$, $\bar{A}$ as the average accessibility of passengers originating from zone $i$ in income class $k$, the average accessibility of passengers in income class $k$, and the average accessibility of all passengers, respectively. They are calculated as:
\begin{subnumcases}{}
    A_{i,k} = \frac{\sum_{j=1}^M \sum_{t\in\mathcal{T}} \lambda_{ij,k}^t A_{ij,k}}{\lambda_{i,k}} \\
A_k = \frac{\sum_{i=1}^M \sum_{j=1}^M \sum_{t\in\mathcal{T}} \lambda_{ij,k}^t A_{ij,k}}{\lambda_{k}} \\
\overline{A} = \frac{\sum_{i=1}^M \sum_{j=1}^M \sum_{k=1}^K \sum_{t\in\mathcal{T}} \lambda_{ij,k}^t A_{ij,k}}{\bar{\lambda}}
\end{subnumcases}
Finally, we define the Theil (T) coefficient of the accessibility distribution in the multimodal transportation system as:
\begin{equation} \label{Theil_coefficient}
    T = \text{WITHIN} + \text{BETWEEN} = \underbrace{\sum_{k=1}^K \sum_{i=1}^M \left(\frac{\lambda_{i,k}}{\overline{\lambda}} \frac{A_{i,k}}{A_k}\right) \ln \left(\frac{A_{i,k}}{A_k}\right)}_\text{spatial equity} + \underbrace{\sum_{k=1}^K \left(\frac{\lambda_k}{\bar{\lambda}} \frac{A_k}{\overline{A}}\right) \ln \left(\frac{A_k}{\bar{A}}\right)}_\text{social equity} .
\end{equation}
As shown in (\ref{Theil_coefficient}), the Theil coefficient can be decomposed into two components. The WITHIN component calculates the inequality in the distribution of accessibility across distinct geographic zones {\em within} different groups. Consequently, it characterizes the spatial inequity due to the differentiated mobility service across distinct zones. The BETWEEN component evaluates the disparity in accessibility distributions {\em between} different income classes, which captures the social inequity arising from the unequal socioeconomic status. Note that a larger Theil coefficient indicates a more inequitable distribution of benefits across different zones or socioeconomic groups. It can be easily verified that if all zones have the same level of accessibility, then the WITHIN component of (\ref{Theil_coefficient}) is zero. Similarly, if all classes of passengers have the same level of accessibility, then the BETWEEN component of (\ref{Theil_coefficient}) is zero.

\begin{remark}
    We acknowledge that in transport economic analysis, it is common to scale utility by the marginal utility of income, which converts utility to monetary units and measures accessibility as the expected consumer surplus. The difference in expected consumer surplus is also widely used to measure accessibility benefits arising from transport investments or policies \cite{ahmed2020quantifying}. However, monetizing utility as the accessibility measure would bias the accessibility impacts across distinct income classes, as pointed out by \cite{bills2017looking}. Assessing the disparity in the distribution of accessibility benefits may also not align with our objective of promoting an equitable multimodal transport network. Therefore, we define accessibility as the expected maximum utility, which coincides with the concept of accessibility that is first proposed in \cite{ben1985discrete}. It provides a subjective value that reflects the satisfaction or well-being experienced by distinct income classes, which serves as a comprehensive and unbiased indicator of accessibility.
\end{remark}

\subsection{Case Studies} \label{case study}

Consider a case study for San Francisco, where a TNC platform operates autonomous vehicles for mobility-on-demand services and the public transit agency manages a transit network to provide public transit services. We conduct a numerical study using realistic synthetic data for San Francisco. The data consists of the origin and destination of TNC trips at the zip-code granularity, which is synthesized based on the historic total TNC pickups and dropoffs in each zone \cite{SanFrancisco2022TNC} together with a choice model calibrated with survey data. The zip code zones of San Francisco are shown in Figure \ref{fig:SF_multimodal}. Based on the TNC data, we remove zip code zones 94123, 94127, 94129, 94130, 94134 from our analysis since they have negligible trip volumes. We also merge zip code zones 94104, 94105, and 94111 into a single zone, and merge zip code zones 94108 and 94133 into a single zone since each of these individual zones is very small. We synthesize the public transit network based on the Muni system in San Francisco, which is a network of light-rail metro trains, rapid buses, regular buses, cable cars, etc \cite{SanFrancisco2022Muni}. The synthetic transit network consists of the five light rail lines ('F', 'J', 'KT', 'M', and 'N') and the three rapid bus lines ('5R', '9R', '38R') since they form the major skeleton of the Muni network and undertake the majority of trip volumes. The route information of these transit lines can be found in \cite{SanFrancisco2022Muniroutes} and the synthetic transit network is shown in Figure~\ref{fig:SF_multimodal}. To guarantee that passengers are at least accessible to public transit in each zone, we further aggregate zip code zones 94123 and 94115 since they are geographically adjacent. We further define the underserved area $\mathcal{U}$ as consisting of zip code zones 94112, 94114, 94116, 94117, 94118, 94121, 94122, 94124, 94131, 94132 based on the density of transit stations in different zones. Note that the underserved area is generally the remote area in the city. Based on the service areas, we divide transit lines into two subsets: the high-frequency lines $\mathcal{L}_H=\text{\{'F','J','KT','9R'\}}$ that primarily serve the core area, and low-frequency lines $\mathcal{L}_L=\text{\{'N','M','5R','38'\}}$ which mainly serve the remote area.

\begin{figure}[t]
    \centering
    \includegraphics[width=0.58\linewidth]{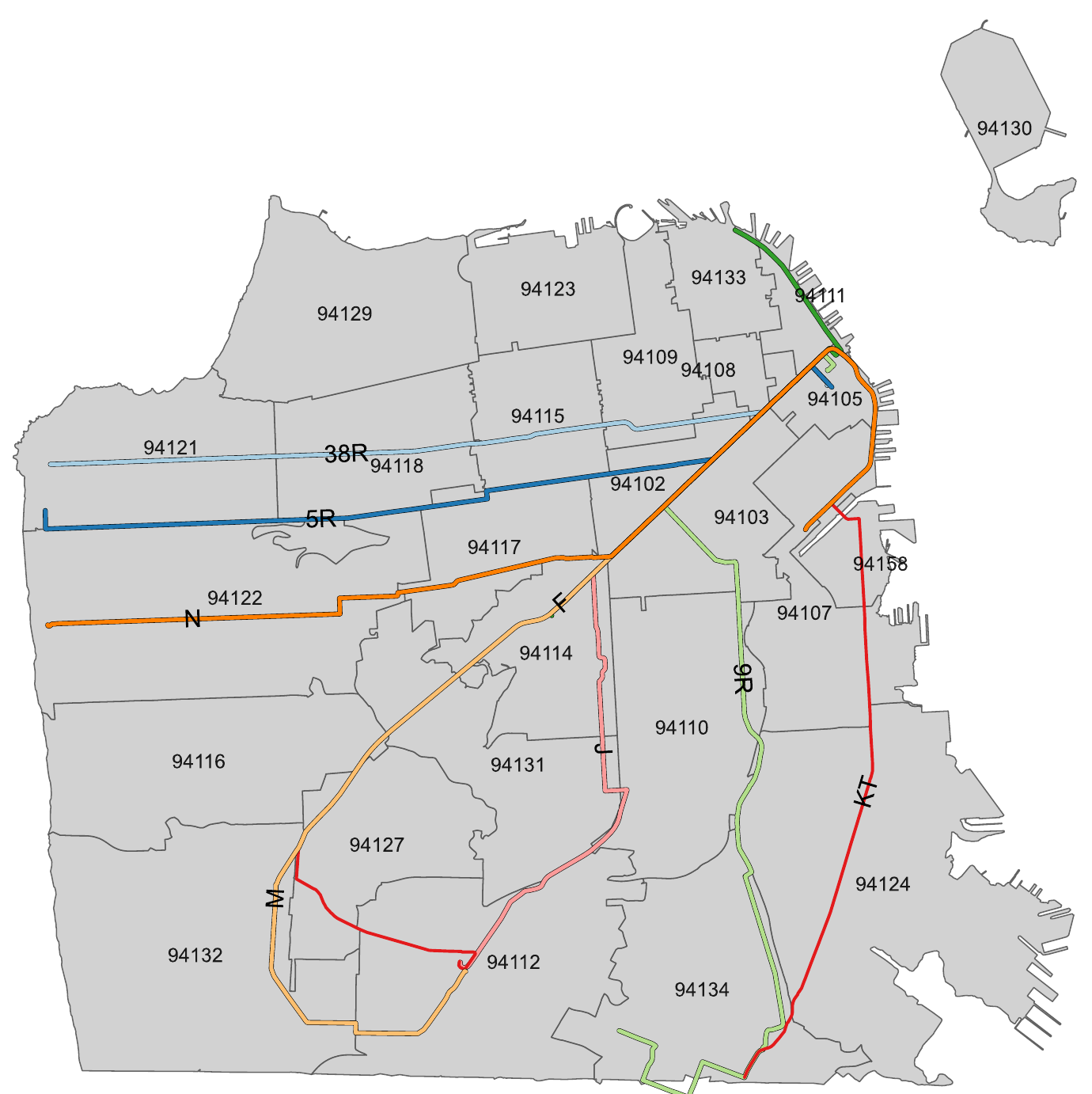}
    \caption{Zip code zones of San Francisco County and the synthetic public transit network.}
    \label{fig:SF_multimodal}
\end{figure}

We consider heterogeneous passengers with different income levels. In particular, passengers are categorized into three distinct classes, namely, low-income class (class 1), medium-income class (class 2), and high-income class (class 3). Generally, low-income people have a lower valuation of time but a higher valuation of money compared to high-income people, and passengers have a higher valuation of waiting/walking time than that of in-vehicle time. Therefore, the following conditions hold:
\begin{equation}
    \begin{cases}
    \alpha_k,\theta_k > \beta_k \quad \forall k=1,2,3 \\
    \alpha_1 < \alpha_2 < \alpha_3 \\
    \beta_1 < \beta_2 < \beta_3 \\
    \gamma_1 > \gamma_2 > \gamma_3
    \end{cases} .
\end{equation}

In summary, the model parameters involved in the multimodal transportation system are
\begin{equation}
    \Theta = \left\{\lambda_{ij,k}^0, \alpha_k, \beta_k, \gamma_k, \eta, \mu, c_{ij,k}^o, A_i, l_{ij}^a, l_{a}^p, d_{s_i}, v_a, v_p, v_w, C_{av}, C_{l}, \pi_0  \right\} .
\end{equation}
Due to the lack of real data on AMoD, we calibrate the values of these model parameters based on human-driver-based TNC data and Muni data in San Francisco and map it to the future scenario. In particular, $\lambda_{ij}^0$ is set to satisfy $0.15\lambda_{ij}^0=\lambda_{ij}$, where $\lambda_{ij}$ is the current trip distribution of ride-sourcing trips in San Francisco from zone $i$ to zone $j$, such that 15\% of potential passengers choose to take AMoD services. We further split the potential passenger demand $\lambda_{ij}^0$ among three income classes to obtain $\lambda_{ij,k}^0$. To characterize the spatial difference in socioeconomic characteristics, we assume population distributions of low-income, middle-income, and high-income individuals as 0.4, 0.5, and 0.1 in remote areas and 0.2, 0.5, and 0.3 in core areas\footnote{For simplicity, we define the remote area as the underserved area $\mathcal{U}$. The core area consists of the zip code zones except those in $\mathcal{U}$.}. The potential passenger demand $\lambda_{ij,k}^0$ is then calculated proportionally. The trip distance $l_{ij}^a$, the transit link length $l_{a}^p$, and the access/egress distance $d_{s_i}$ are obtained from Google map estimates. The average speed of TNC vehicles $v_a=17.937$ mph and the average operating speed of public transit $v_p=14.349$ mph are estimated based on the current TNC data and Muni data. 
The average walking speed is $v_w$ is set as 3.48 mph. The travel cost of the outside option $c_{ij,k}^o$ is assumed to be proportional to $l_{ij}^a$. To account for both socioeconomic and spatial heterogeneity in $c_{ij,k}^o$, we assume that $c_{ij,1}^o=1.2c_{ij,2}^0$ and $c_{ij,2}^o=1.25c_{ij,3}^o$ such that the travel cost of the outside option of low-income passengers is 20\% higher than that of medium-income passengers and 50\% higher than that of high-income passengers. We further assume that the per-distance cost of the outside transport mode in the remote area is 50\% higher than that in the urban core. Based on data from the Muni system \cite{SanFrancisco2022Munifares,SanFrancisco2022Munifrequencies}, the per-vehicle operating costs of different transit lines $C_{l}$ are calibrated.

The rest of the model parameters are set as:
\begin{gather*}
    \alpha_1 = 0.5; \quad \alpha_2 = 1.0; \quad \alpha_3 = 2.0; \quad \beta_1 = 0.15; \quad \beta_2=0.30; \quad \beta_3 = 0.65; \\ \gamma_1 = 3.0; \quad \gamma_2 = 1.5; \quad \gamma_3 = 0.75; \quad \theta_1 = 0.5; \quad \theta_2 = 1.0; \quad \theta_3 = 2.0; \\ \eta = 0.10; \quad \mu = 0.10; \quad A_i^{-1} = 7.894 \times \text{Area of zone $i$}; \quad \pi_0 =  \$1\mathrm{e}4 \text{/hour} .
\end{gather*}
We adjust the values of these parameters such that when the hourly operating cost of AVs is roughly equivalent to the average driver wage of TNCs, the corresponding market outcomes at Nash equilibrium are close to real-world data of San Francisco (e.g., modal share, trip volumes, average trip fare and frequencies of transit lines). In particular, when $C_{av}=\$30\text{/hour}$, we execute Algorithm \ref{algorithm1} to compute the Nash equilibrium and conduct the ex-post evaluation on the obtained equilibrium. The best response algorithm converges to the same equilibrium $\xi^*=(\xi^{a^*},\xi^{p^*})$ in a few iterations under distinct initial guesses. At the equilibrium, the TNC profit $\pi^a(\xi^{a^*},\xi^{p^*})$ is \$95,563/hour. Under a pairwise partition $\bar{\mathcal{V}}$\footnote{$\mathcal{V}$ is partitioned into subsets, each of which contains two zones, e.g., $\mathcal{V}_1=\{1,2\}$, $\mathcal{V}_2=\{3,4\}$, $\dots$, and $\mathcal{V}_9=\{17,18\}$.}, the established upper bound $\overline{\pi}^a$ is \$97,283/hour. Based on Proposition \ref{proposition_epsilon_nash}, this indicates that $\epsilon$ only accounts for 2.0\% compared with the globally optimal solution to (\ref{Incentives_AMoD}), which is a very tight approximate Nash equilibrium. In the meanwhile, the corresponding lower bound $\underline{\pi}^a$ is \$95,096/hour, which is very close to the equilibrium profit $\pi^a(\xi^{a^*},\xi^{p^*})$. This validates the nature of TNC's decision-making: geographic zones with higher potential profits are given higher priority.
For the market outcomes under $C_{av}=\$30/\text{hour}$, the TNC base fare is \$9.6/trip and the average per-distance rate of AMoD services is \$3.61/mile. The calculated trip fare $9.6+3.61\times2.6$ is close to the fare estimates \cite{SanFrancisco2022rate} for a 2.6-mile trip \cite{castiglione2016tncs}. The per-distance fare of public transit is \$1.15/mile and the service frequencies of low-frequency lines and high-frequency lines are 11.79 per hour and 18.08 per hour, which is roughly consistent with the current fare and frequency setting of Muni system \cite{SanFrancisco2022Muniroutes,SanFrancisco2022Munifares}. Moreover, the modal share of public transit is 22\%, which also coincides with the reported data in the survey \cite{SanFrancisco2022modalshare}.

To investigate the impacts of AVs on the multimodal transportation system, we gradually reduce the  cost of AVs, i.e., $C_{av}$, to simulate the scenario in which technological progress makes AVs less expensive over time, resulting in the ride-hailing industry's gradual acceptance of AVs. To this end, we fix all other model parameters, solve the game problem under distinct $C_{av}$ based on Algorithm \ref{algorithm1}, examine how the evolution of AV technology affects the market outcomes in the absence of regulations, and evaluate transport equity using Theil coefficient based on (\ref{Theil_coefficient}). The results are presented in Figure \ref{fig:TNC_no_reg}-\ref{fig:modal_share_no_reg}. Figure \ref{fig:TNC_no_reg} shows TNC's operational strategy and corresponding outcomes under distinct $C_{av}$. As the cost of AV reduces, the ride fare of TNC trips decreases (Figure \ref{fig:base_fare_no_reg}), the number of vehicles on the ride-hailing platform increases (Figure \ref{fig:total_number_vehicles_no_reg}), and the profit of the platform improves (Figure \ref{fig:TNC_profit_no_reg}). These results are intuitive since the reduction in AV cost lowers the marginal cost of AMoD services and thus both the TNC platform and passengers are direct beneficiaries of AV deployment. However, a less intuitive insight is that the spatial distribution of AMoD services is unbalanced:
\begin{itemize}
    \item Idle AVs are highly concentrated in the urban core (solid lines in Figure \ref{fig:number_idle_vehicle_no_reg}), indicating that the platform prefers to deploy idle vehicles in high-demand areas.  This is intuitive because TNC has higher utilization of AVs and higher potential profits in high-demand areas, which promotes it to deploy more idle vehicles to attract passengers from urban centers. However, the geographical concentration of ride-hailing vehicles will not only exacerbate traffic congestion in the urban core, but also leads to spatial inequality in the waiting time/service quality across distinct zones.
    \item As AV cost reduces, the number of idle vehicles increases in both high-demand and low-demand areas of the city, but the gap of service quality between high-demand and low-demand areas is further enlarged (i.e., compare solid and dashed lines in Figure \ref{fig:number_idle_vehicle_no_reg}). This indicates that without regulatory intervention, lowering the cost of AV will promote the proliferation of AVs, but at the same time exacerbate the existing spatial inequality gaps in the multimodal transportation network.
    \item AMoD services exhibit economies of scale, which contributes to the spatial disparity in AMoD services and exacerbates spatial inequity. To fully utilize the scale economies of AMoD services, the TNC platform disproportionally deploys idle vehicles in the urban center (Figure \ref{fig:number_idle_vehicle_no_reg}) since it has a higher potential passenger demand, which could exhibit stronger economies of scale and generate higher potential profits. As AV cost reduces, the impacts of scale economies become more significant, which exacerbates the geographic concentration of TNC services and enlarges spatial inequity gaps (see Figure \ref{fig:idle_AV_scale_04}-\ref{fig:T_scale}). A detailed exploration of the impacts of scale economies can be found in Appendix~E.
\end{itemize}

\begin{figure}[H]
     \centering
     \begin{subfigure}[b]{0.48\textwidth}
         \centering
         \includegraphics[width=\textwidth]{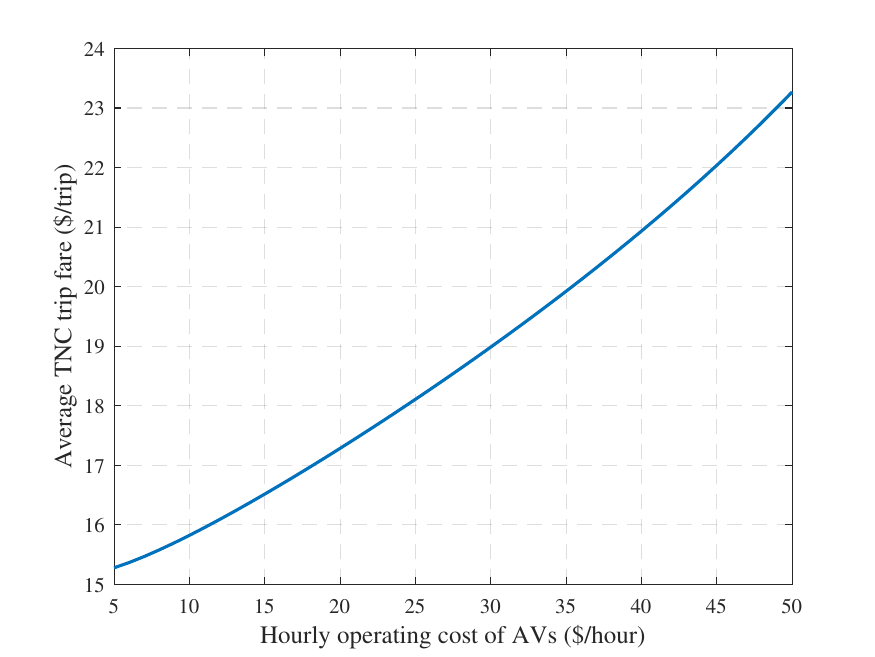}
         \caption{{Average TNC trip fare under distinct $C_{av}$.}}
         \label{fig:base_fare_no_reg}
     \end{subfigure}
     \hfill
     \begin{subfigure}[b]{0.48\textwidth}
         \centering
         \includegraphics[width=\textwidth]{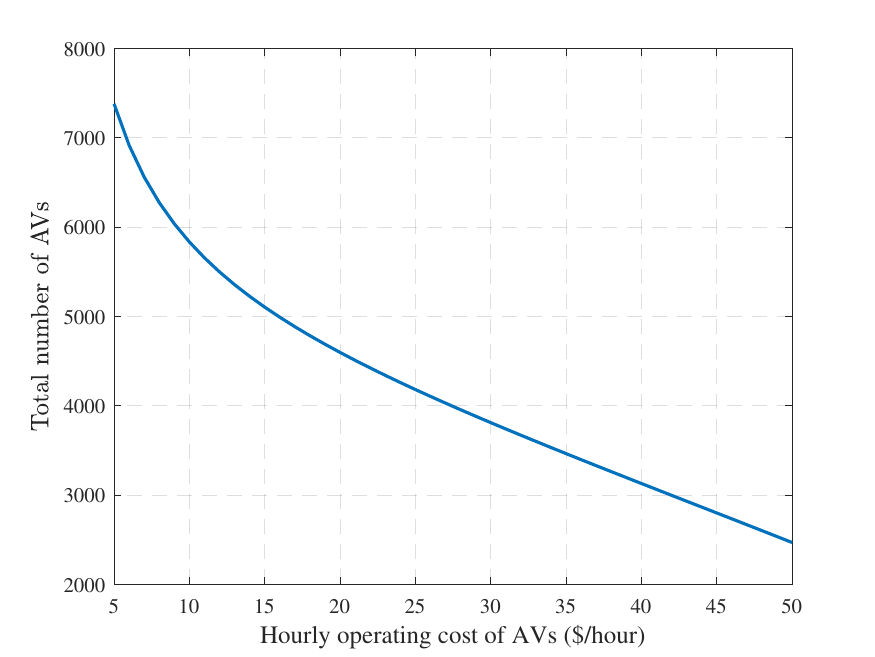}
         \caption{Total number of AVs under distinct $C_{av}$.}
         \label{fig:total_number_vehicles_no_reg}
     \end{subfigure}

     \begin{subfigure}[b]{0.48\textwidth}
         \centering
         \includegraphics[width=\textwidth]{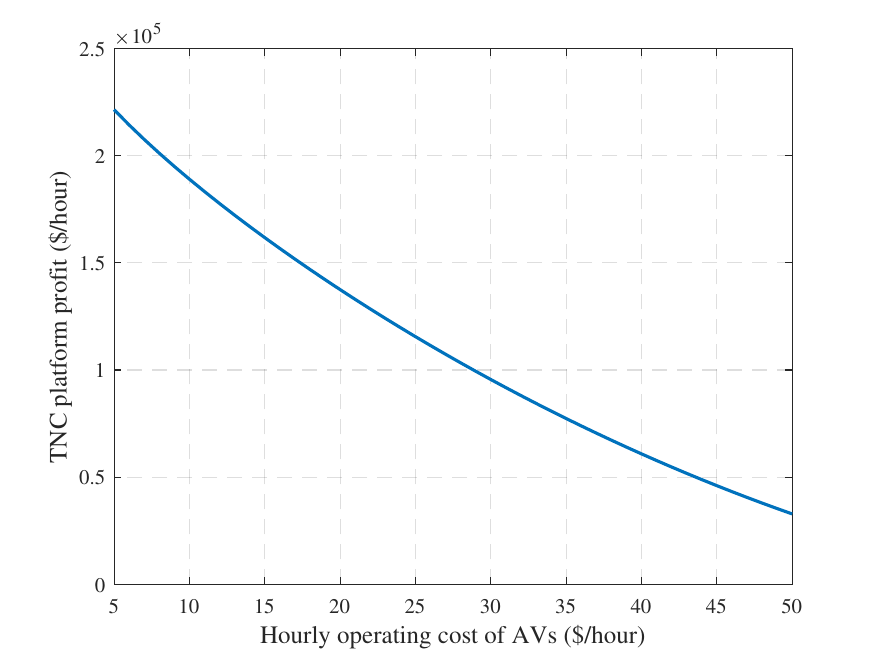}
         \caption{Hourly TNC profit under distinct $C_{av}$.}
         \label{fig:TNC_profit_no_reg}
     \end{subfigure}
     \hfill  
     \begin{subfigure}[b]{0.48\textwidth}
         \centering
         \includegraphics[width=\textwidth]{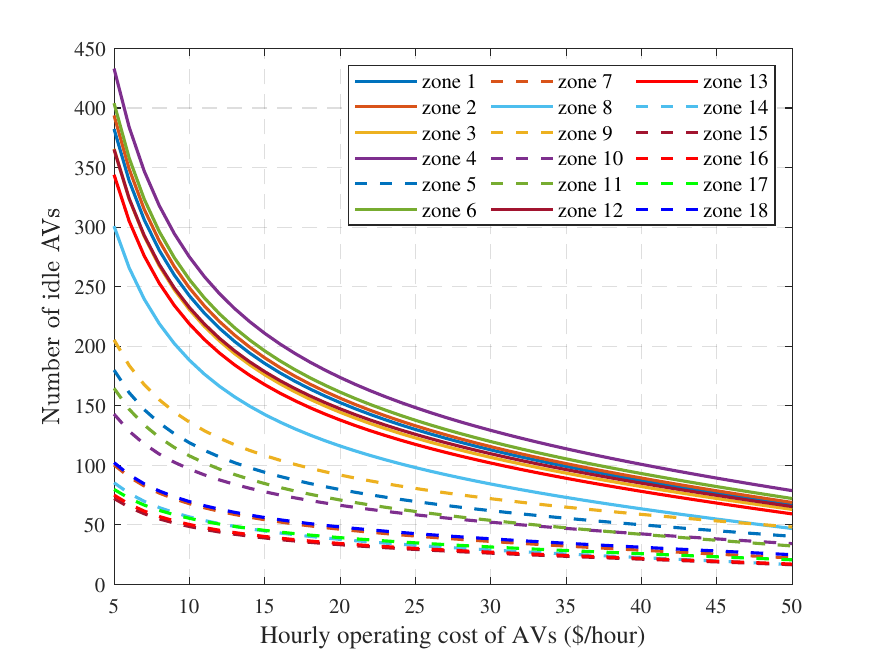}
         \caption{Number of idle vehicles in different zones under distinct $C_{av}$.}
         \label{fig:number_idle_vehicle_no_reg}
     \end{subfigure}
    \caption{TNC's operational decisions and outcomes under distinct $C_{av}$.}
    \label{fig:TNC_no_reg}
\end{figure}

To understand how the deployment of AVs affects social equity, we shall compare the impacts of AVs on distinct passenger income classes. Note that low-income populations are more transit-dependent, thus to characterize social equity, we will first examine the operational decisions of the transit agency. 
Figure \ref{fig:PT_no_reg} presents the public transit agency's operational decisions and the corresponding outcomes under distinct $C_{av}$. Based on the simulation results, the proliferation of AVs has a moderate impact on the transit operational decisions:  as $C_{av}$ reduces, the fare of public transit monotonically increases (Figure \ref{fig:transit_fare_no_reg}),  the service frequencies of most transit lines decreases (Figure \ref{fig:transit_frequency_no_reg}), the revenue and operating cost of the transit network slightly decrease (Figure \ref{fig:transit_revenue_no_reg}), and the total ridership of public transit under distinct $C_{av}$ exhibits two regimes: when $C_{av}>\$36/\text{hour}$, the transit ridership increases as $C_{av}$ reduces; when $C_{av}\leq \$36/\text{hour}$, the transit ridership decreases as $C_{av}$ drops (Figure \ref{fig:transit_ridership_no_reg}).  The above results reveal some interesting competitive and complementary effects of AMoD services on public transit. Note that the TNC provides on-demand door-to-door mobility services (direct AMoD services, i.e., mode $a$), which is a substitution for public transit. On the contrary, TNC trips serving as first/last-mile connections to public transit (bundled services, i.e., mode $b_1$, $b_2$, and $b_3$) could be a complement/feeder to public transit. The TNC and public transit agency carefully determine their operational strategies to balance the competitiveness and complementarity between AMoD services and public transit. When $C_{av}$ is relatively high, the competitiveness/substitution of AMoD services to public transit is weak, and the complementarity dominates. To strengthen the complementary effect and fully utilize the benefits of reduced AV cost, the public transit agency improves the service frequencies of some transit lines to attract more passengers to choose bundled services (Figure \ref{fig:transit_frequency_no_reg}). In this case, although the number of passengers choosing direct public transit drops, the passenger arrival rates of bundled services increase, which leads to increased total ridership of public transit (Figure \ref{fig:transit_ridership_no_reg}). The total operating cost of the transit network increases (Figure \ref{fig:transit_revenue_no_reg}), and therefore the agency raises the transit fare to maintain a balance of revenue and expenditure (Figure \ref{fig:transit_fare_no_reg}). On the other hand, when $C_{av}$ is relatively low, the competitiveness/substitution of direct AMoD services to public transit increases as $C_{av}$ reduces and gradually becomes the dominant effect. The competition between the TNC and public transit deteriorates transit services and procures passengers to deviate from public transit (Figure \ref{fig:transit_frequency_no_reg}). The total transit ridership significantly decreases as $C_{av}$ reduces (Figure \ref{fig:transit_ridership_no_reg}), and both the revenue and operating cost of the transit network decrease (Figure \ref{fig:transit_revenue_no_reg}). However, we acknowledge that the above impacts are insignificant because the AV cost only influences the transit agency indirectly through competition.

To further investigate the impact of AVs on social equity, we compare the mode choices of different income classes of the multimodal transportation system in Figure \ref{fig:modal_share_no_reg}.  As AV cost reduces, the modal shares of direct AMoD services (mode $a$) and bundled services (mode $b_1$, $b_2$, and $b_3$) increase, and the proportion of passengers choosing public transit (mode $p$) decreases for distinct income classes (Figure \ref{fig:modal_share_low_no_reg}-\ref{fig:modal_share_high_no_reg}). Overall, passengers shift from the outside option (e.g., walking, bicycling, and driving) and public transit to direct AMoD services and bundled services as AV cost reduces (Figure \ref{fig:modal_share_overall_no_reg}). These results are intuitively straightforward: the decrease in AV cost lowers the marginal cost of AMoD services, which promotes cheaper and improved AMoD services and instead disrupts transit services (in the second regime). However, the impacts of AVs on the modal choices of distinct classes of travelers are different, indicating that the distribution of benefits of AV is unequal:
\begin{itemize}
    \item Low-income individuals are highly transit-dependent. More than half of low-income individuals take public transit and over a third of them choose the outside option (e.g., walking or bicycling) (Figure \ref{fig:modal_share_low_no_reg}). As AV cost reduces, the increase in the modal share of AMoD services and bundled services is insignificant (Figure \ref{fig:modal_share_low_no_reg}). To understand this, note that low-income people have a relatively high valuation of monetary cost and a low valuation of time. As AV cost reduces, although ride fares and the waiting time of AMoD services decrease, the TNC deploys a small number of idle AVs and set relatively high ride fares (compared to transit) in remote areas with a high proportion of low-income individuals, who have to stick to public transit although its travel cost increases.
    \item The evolution of AV technology significantly transforms medium-income individuals' mode choices. As AV cost reduces, the number of medium-income individuals taking transit decreases while the passenger arrival rate of direct AMoD services gradually increases and finally surpasses that of public transit (Figure \ref{fig:modal_share_medium_no_reg}). In the meanwhile, an increasing number of medium-income individuals choose bundled services (Figure \ref{fig:modal_share_medium_no_reg}), although they prefer to use AMoD as the first-mile or last-mile connection rather than for both the first-mile and last-mile connection (Figure \ref{fig:modal_share_medium_no_reg}). 
    \item High-income individuals prefer convenient and fast mobility options. As AV cost reduces, the majority of high-income individuals shift from the outside option (e.g., driving) to direct AMoD services and only a small proportion of individuals choose public transit and bundled services (Figure \ref{fig:modal_share_high_no_reg}). High-income people have a high valuation of time and therefore they are unwilling to experience a long trip time and prefer driving or direct AMoD services.
\end{itemize}

\begin{figure}[t]
     \centering
     \begin{subfigure}[b]{0.48\textwidth}
         \centering
         \includegraphics[width=\textwidth]{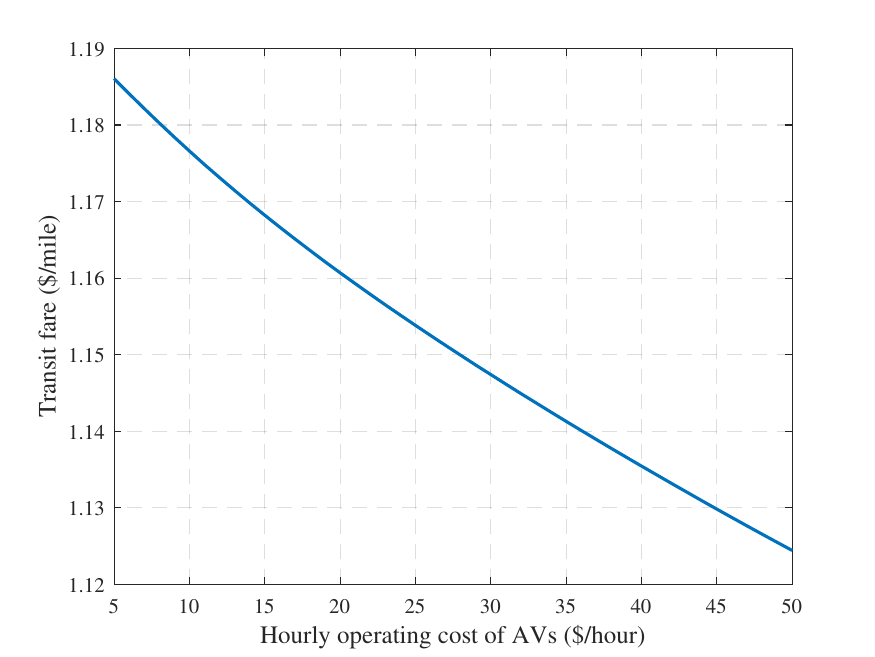}
         \caption{Per-distance transit fare under distinct $C_{av}$.}
         \label{fig:transit_fare_no_reg}
     \end{subfigure}
     \hfill
    \begin{subfigure}[b]{0.48\textwidth}
     \centering
     \includegraphics[width=\textwidth]{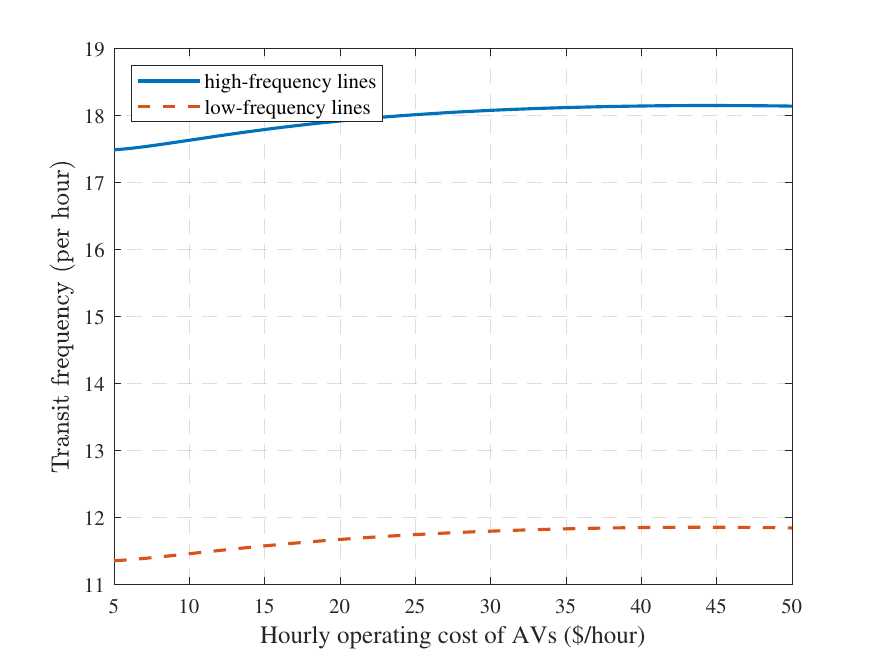}
     \caption{Frequencies of different transit lines under distinct $C_{av}$.}
     \label{fig:transit_frequency_no_reg}
    \end{subfigure}
    \begin{subfigure}[b]{0.48\textwidth}
         \centering
         \includegraphics[width=\textwidth]{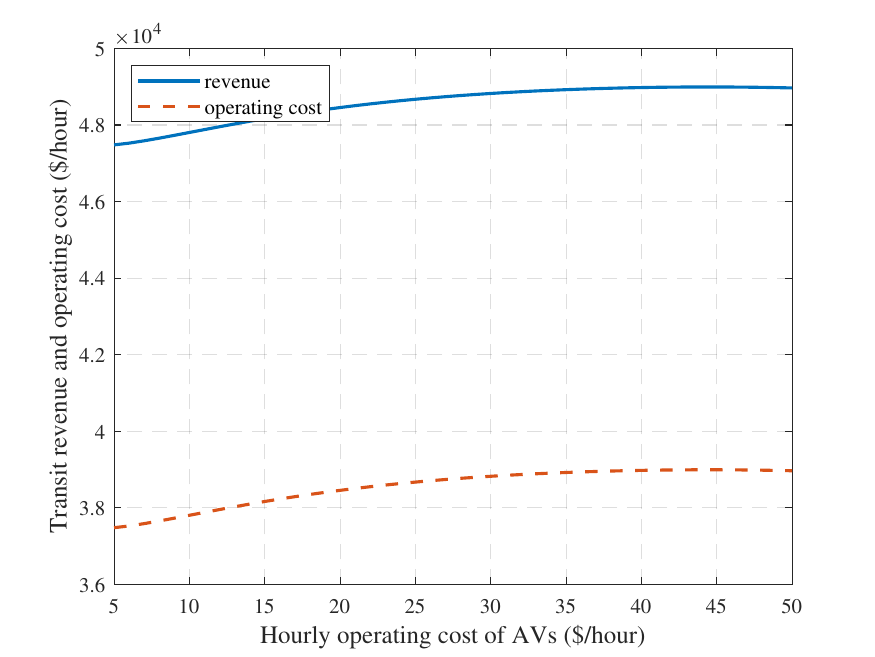}
         \caption{Transit revenue and operating cost under distinct $C_{av}$.}
         \label{fig:transit_revenue_no_reg}
    \end{subfigure}
    \hfill
    \begin{subfigure}[b]{0.48\textwidth}
         \centering
         \includegraphics[width=\textwidth]{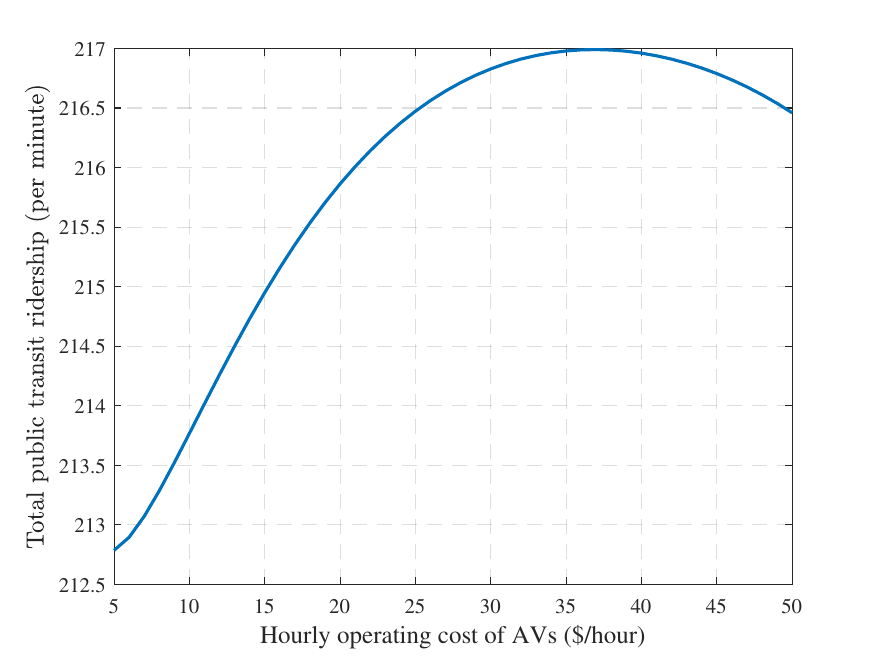}
         \caption{Total public transit ridership under distinct $C_{av}$.}
         \label{fig:transit_ridership_no_reg}
    \end{subfigure}
    \caption{Public transit's operational decisions and outcomes under distinct $C_{av}$.}
    \label{fig:PT_no_reg}
\end{figure}

To summarize, the above analysis indicates that the proliferation of AVs will increase both spatial and social inequity gaps in the multimodal transportation network. This is consistent with the simulation results presented in  Figure \ref{fig:Theil_no_reg}, where both the WITHIN component (i.e, spatial inequity) and the BETWEEN component (i.e., social inequity) increase and the Theil coefficient significantly increases as $C_{av}$ reduces. We comment that the reason for the expansion of both the spatial inequity gap and social inequity gap is twofold. On the one hand, the spatial inequality in accessibility across different zones is mainly caused by the geographic concentration of AMoD services. As mentioned above, due to the for-profit nature of the TNC platform, idle AVs are highly concentrated in the urban core (Figure \ref{fig:number_idle_vehicle_no_reg}), and passengers in the urban core enjoy more convenient services compared to those in remote areas. The geographical imbalance in AMoD services across distinct zones causes spatial inequity in accessibility. Besides, such geographic imbalance in AMoD services exacerbates as AV cost reduces (Figure \ref{fig:number_idle_vehicle_no_reg}), which enlarges the spatial inequality in accessibility and reinforces the spatial inequity gap (Figure \ref{fig:Theil_no_reg}). On the other hand, the increase in social inequity is not only due to the moderate disruptiveness of AMoD services to public transit, but also because the benefits of reduced AV costs are primarily enjoyed by medium-income and high-income individuals, and the increase in the accessibility of low-income individuals is less significant compared to that of medium-income and high-income individuals (Figure \ref{fig:accessibility_groups}). Therefore, the social inequity gap exacerbates as AV cost reduces (Figure \ref{fig:Theil_no_reg}).

\begin{figure}[H]
     \centering
     \begin{subfigure}[b]{0.48\textwidth}
         \centering
         \includegraphics[width=\textwidth]{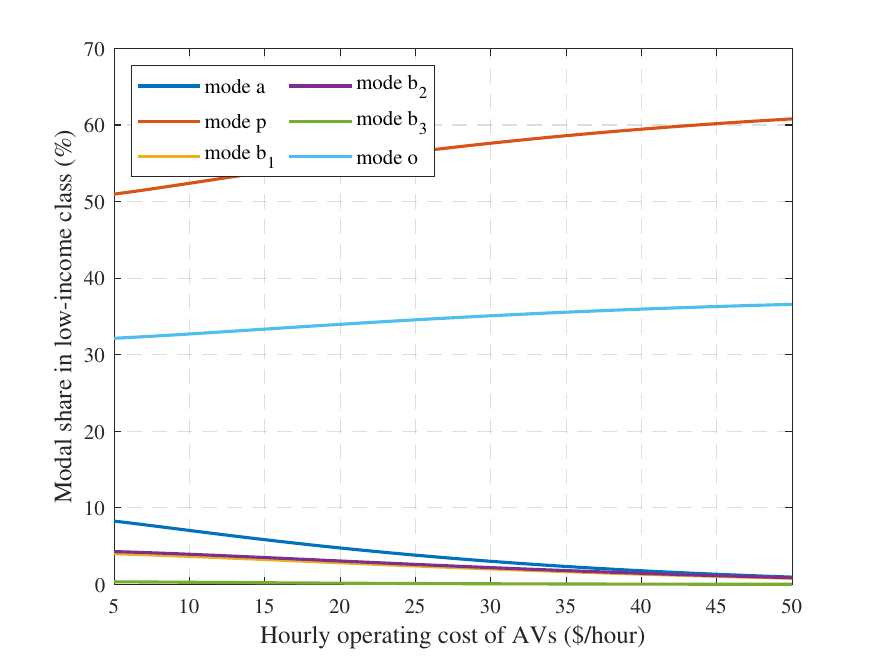}
         \caption{Modal share in low-income class under distinct $C_{av}$.}
         \label{fig:modal_share_low_no_reg}
     \end{subfigure}
     \hfill
    \begin{subfigure}[b]{0.48\textwidth}
     \centering
     \includegraphics[width=\textwidth]{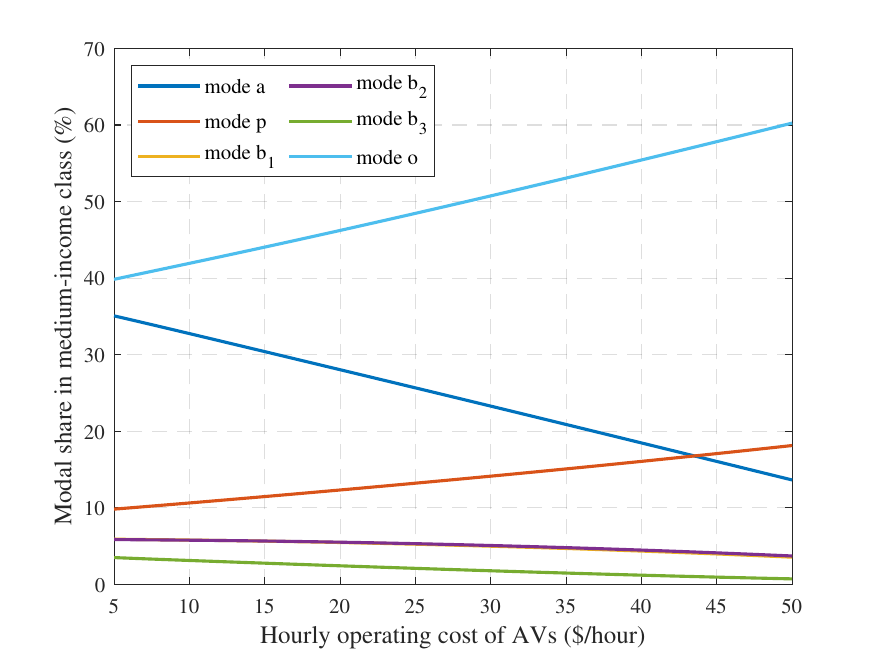}
     \caption{Modal share in medium-income class under distinct $C_{av}$.}
     \label{fig:modal_share_medium_no_reg}
    \end{subfigure}
    \begin{subfigure}[b]{0.48\textwidth}
         \centering
         \includegraphics[width=\textwidth]{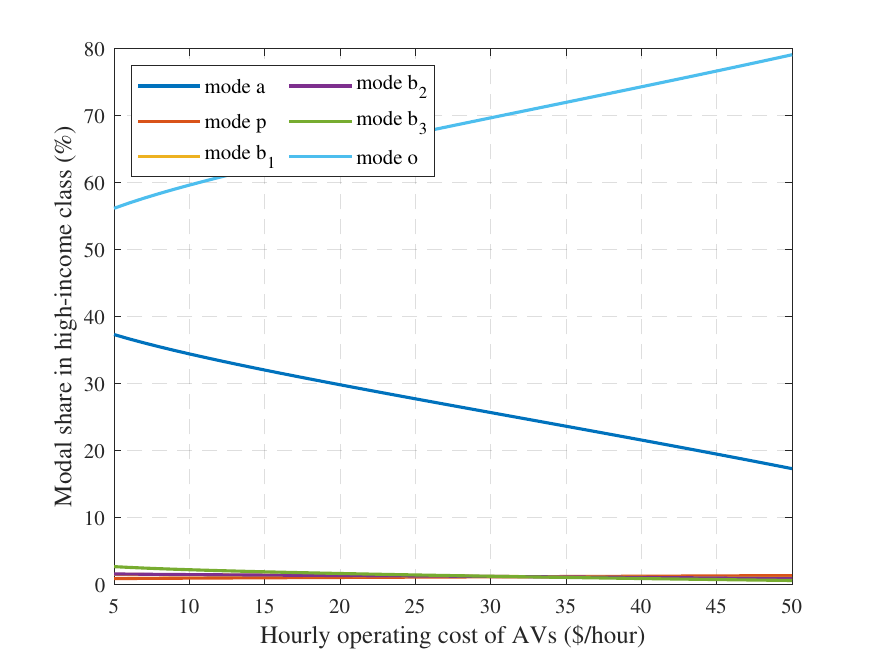}
         \caption{Modal share in high-income class under distinct $C_{av}$.}
         \label{fig:modal_share_high_no_reg}
    \end{subfigure}
    \hfill
    \begin{subfigure}[b]{0.48\textwidth}
         \centering
         \includegraphics[width=\textwidth]{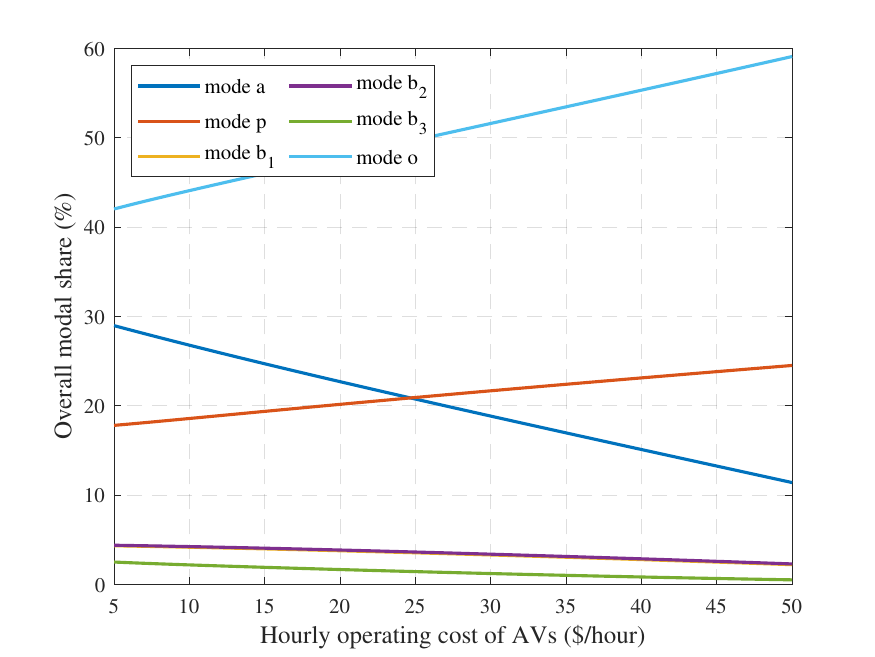}
         \caption{Overall modal share under distinct $C_{av}$.}
         \label{fig:modal_share_overall_no_reg}
    \end{subfigure}
    \begin{subfigure}[b]{0.48\textwidth}
         \centering
         \includegraphics[width=\textwidth]{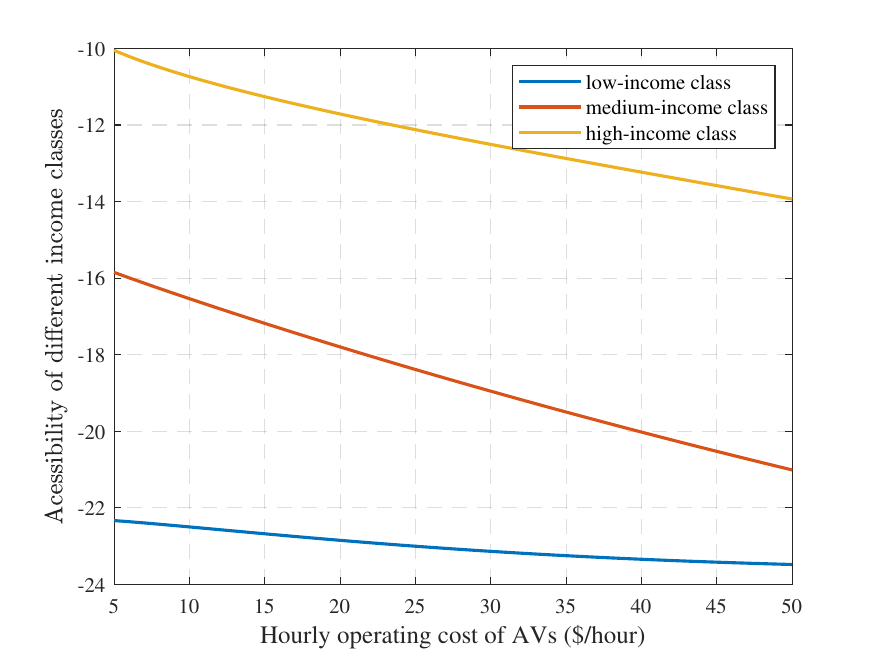}
         \caption{Average accessibility of different income groups under distinct $C_{av}$.}
         \label{fig:accessibility_groups}
     \end{subfigure}
     \hfill
     \begin{subfigure}[b]{0.48\textwidth}
         \centering
         \includegraphics[width=\textwidth]{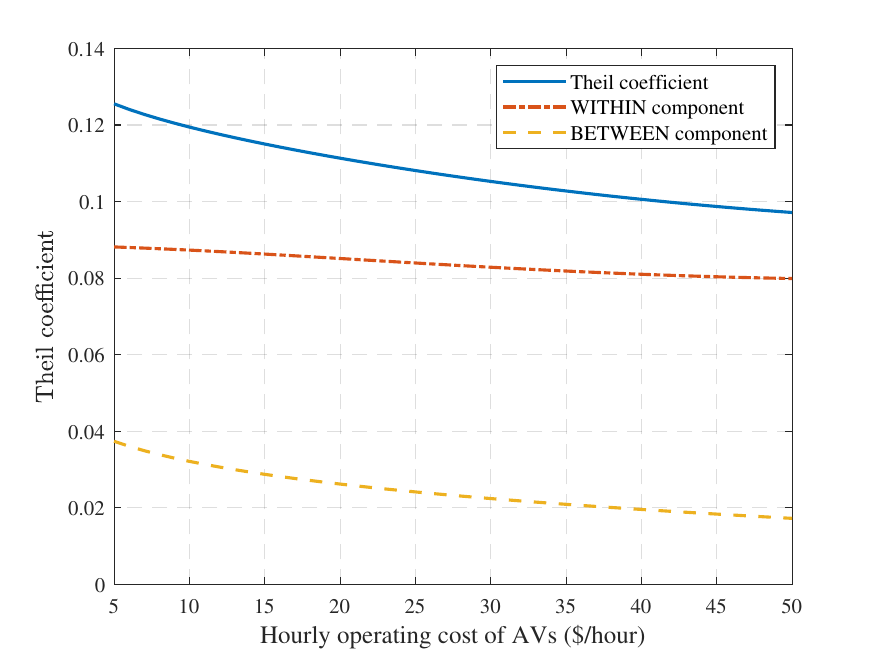}
         \caption{Theil coefficient, WITHIN component, and BETWEEN component under distinct $C_{av}$}
         \label{fig:Theil_no_reg}
     \end{subfigure}    
    \caption{Modal share and transport equity in the multimodal transportation system under distinct $C_{av}$.}
    \label{fig:modal_share_no_reg}
\end{figure}

\section{Impacts of Regulatory Policies} \label{regulations}

Our numerical study so far suggests that spatial inequality and social inequality gap enlarges as AV cost reduces. This section proposes two regulatory policies, i.e., a minimum service-level requirement and a subsidy on transit services, to mitigate the negative equity impacts of AVs in the multimodal transportation network.

\subsection{A Minimum Service-Level Requirement} \label{service}

To improve spatial equity, we consider a service-level requirement that mandates TNCs to guarantee an acceptable waiting time for AMoD services in all areas of the city. Such policy has already been implemented in practice, for instance,  New York City requires that 90\% of wheelchair-accessible vehicle requests be fulfilled within 15 minutes \cite{NYCWAV}. Under the proposed model, this policy can be easily modeled as:
\begin{equation} \label{minimum_serivce}
    w_i^a\left( N_i^I \right) \leq w_{\text{max}}^a, \forall i=1,2,\dots,M ,
\end{equation}
where $w_i^a(\cdot)$ is the average waiting time for AMoD trips starting in zone $i$, and $w_{\text{max}}^a$ is the maximum allowable average zonal waiting time set by the regulatory agency to ensure reasonable and equitable service quality for travelers across distinct zones. In this case, the TNC determines its operational strategy to maximize its profit subject to the market equilibrium conditions (\ref{generalized_cost_a})-(\ref{vehicle_hour_conservation}) and the extra minimum service-level requirement (\ref{minimum_serivce}). The public transit agency strategically adjusts its operational decisions to maximize transit ridership subject to the equilibrium constraints (\ref{generalized_cost_a})-(\ref{logit_demand_function}) and (\ref{frequency_consistency})-(\ref{profit_constraint}). Note that the minimum service-level requirement does not change the structure of the profit maximization problem for the TNC and the ridership maximization problem for public transit. We can use Algorithm \ref{algorithm1} to solve the game problem and conduct an ex-post evaluation on the computed Nash equilibrium.

To investigate the impacts of the minimum service-level requirement, we fix $C_{av}$ as 20\$/hour and keep other model parameters unchanged, solve the game problem under distinct $w_{\text{max}}^a$, and present how it impacts the market outcomes and transport equity in Figure \ref{fig:TNC_service}-\ref{fig:equity_service}.

\begin{figure}[t!]
     \centering
     \begin{subfigure}[b]{0.48\textwidth}
         \centering
         \includegraphics[width=\textwidth]{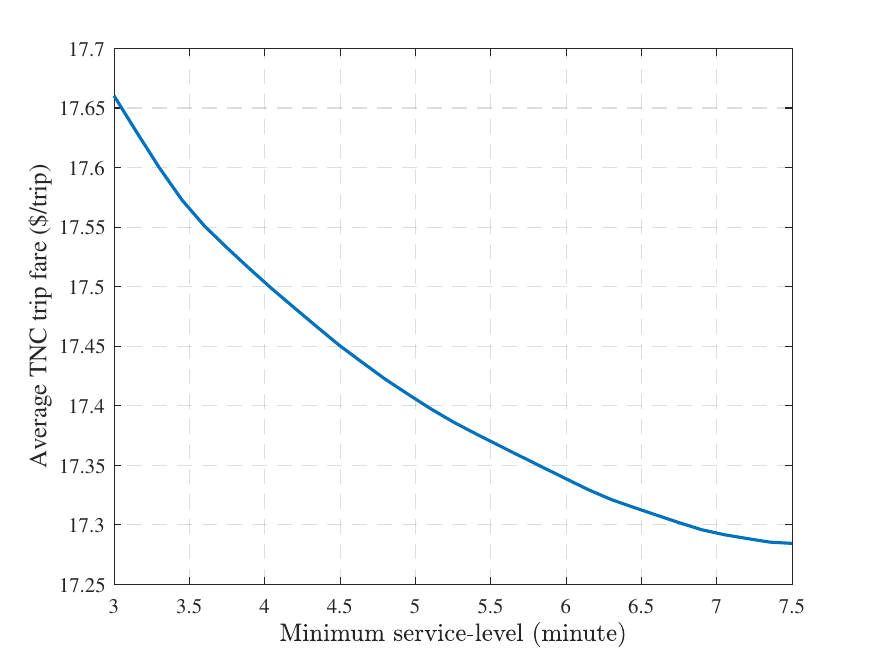}
         \caption{{Average trip fare of the ride-hailing services under distinct values of $w_{\text{max}}^a$.}}
         \label{fig:base_fare_service}
     \end{subfigure}
     \hfill
     \begin{subfigure}[b]{0.48\textwidth}
         \centering
         \includegraphics[width=\textwidth]{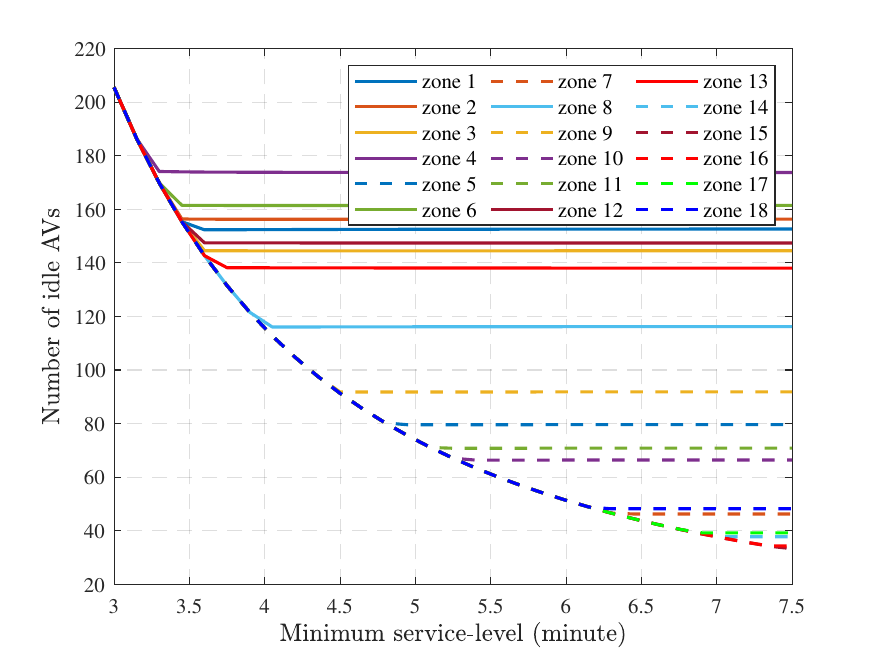}
         \caption{Number of idle vehicles in different zones of the transport network under distinct values of $w_{\text{max}}^a$.}
         \label{fig:number_idle_vehicle_service}
     \end{subfigure}

     \begin{subfigure}[b]{0.48\textwidth}
         \centering
         \includegraphics[width=\textwidth]{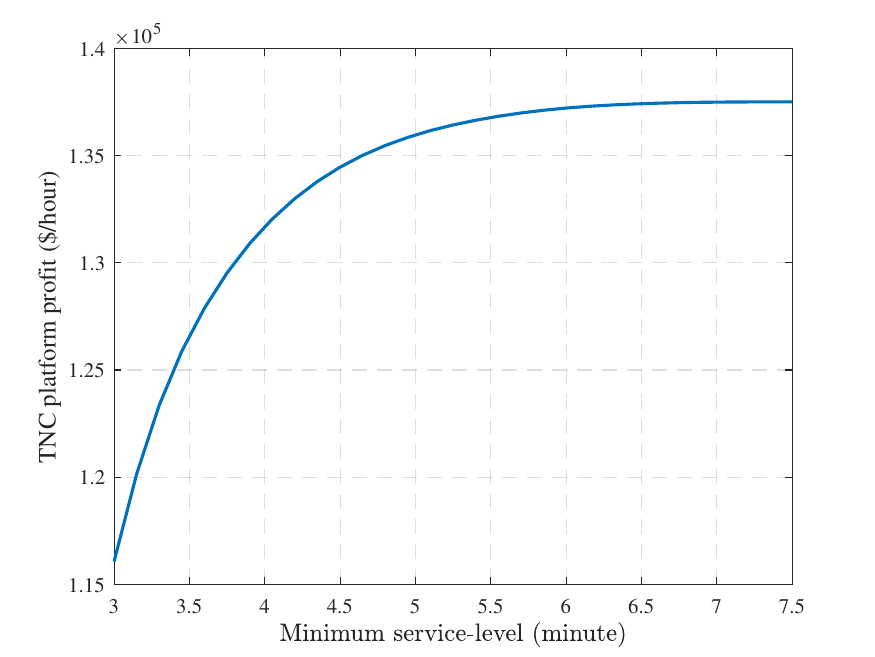}
         \caption{Hourly profit of the TNC platform under distinct values of $w_{\text{max}}^a$.}
         \label{fig:TNC_profit_service}
     \end{subfigure}
     \hfill
     \begin{subfigure}[b]{0.48\textwidth}
         \centering
         \includegraphics[width=\textwidth]{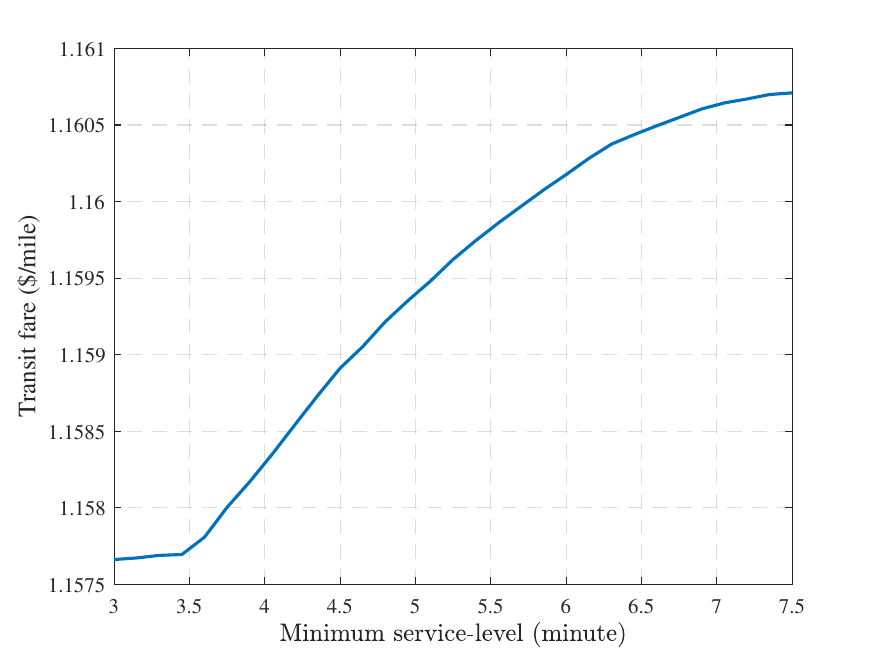}
         \caption{ Trip fare (per-distance) of transit services under different values of $w_{\text{max}}^a$.}
         \label{fig:transit_fare_service}
     \end{subfigure}
     
    \begin{subfigure}[b]{0.48\textwidth}
     \centering
     \includegraphics[width=\textwidth]{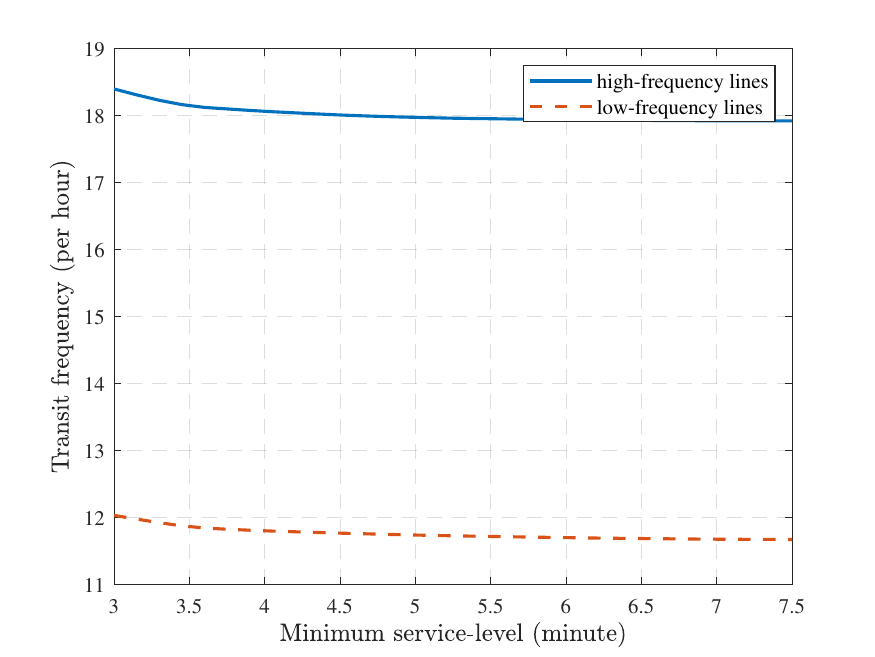}
     \caption{Frequencies of different transit lines under distinct values of $w_{\text{max}}^a$.}
     \label{fig:transit_frequency_service}
    \end{subfigure}
     \hfill
    \begin{subfigure}[b]{0.48\textwidth}
         \centering
         \includegraphics[width=\textwidth]{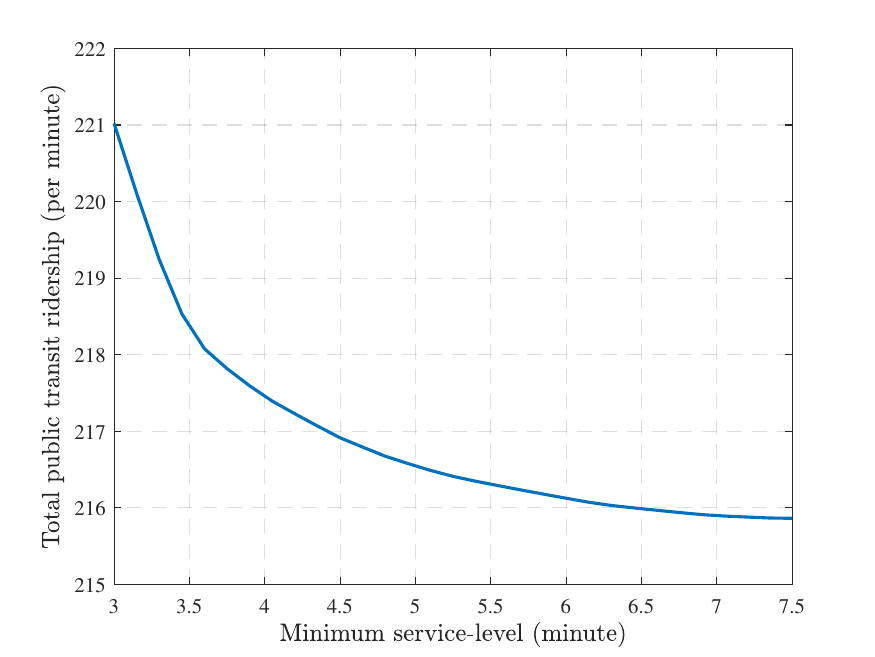}
         \caption{Total public transit ridership per minute under distinct values of $w_{\text{max}}^a$.}
         \label{fig:transit_ridership_service}
    \end{subfigure}
    \caption{Operational decisions of the TNC platform and transit agency under distinct $w_{\text{max}}^a$.}
    \label{fig:TNC_service}
\end{figure}

Figure \ref{fig:TNC_service} shows the operational strategies of TNC platform and the public transit agency under distinct $w_{\text{max}}^a$. Under a stricter minimum service-level requirement,  the number of idle vehicles in distinct zones remains unchanged initially and then increases, especially in the low-demand areas (Figure \ref{fig:number_idle_vehicle_service}). In this case, the TNC earns a reduced profit under a stricter minimum service-level requirement (Figure \ref{fig:TNC_profit_service}) and increases the average ride fare to improve profitability (Figure \ref{fig:base_fare_service})). Clearly, this lower bound enforces the TNC to improve the service quality in low-demand areas where the TNC is reluctant to place its idle AVs in the absence of regulations. This improves spatial equity while hurting the profitability of TNC's AMoD business.
Figure \ref{fig:transit_fare_service}-\ref{fig:transit_ridership_service} presents the transit agency's operational strategies and the corresponding outcomes under distinct $w_{\text{max}}^a$. As $w_{\text{max}}^a$ reduces, the agency slightly lowers the transit fare (Figure \ref{fig:transit_fare_service}) and increases the service frequencies of its transit lines (Figure \ref{fig:transit_frequency_service}). This leads to reduced waiting time for transit services and increased total transit ridership (Figure \ref{fig:transit_ridership_service}). Similarly, this can be explained as a consequence of the complementarity between AMoD services and public transit. Imposing a minimum service-level requirement improves the service quality of AMoD services, especially in underserved/remote areas (Figure \ref{fig:number_idle_vehicle_service}). It promotes the agency to improve the transit service and reinforce the AMoD-transit complementarity to attract more passengers to choose bundled services and stimulate a higher transit ridership.

Figure \ref{fig:equity_service} demonstrates the impacts of the minimum service-level requirement on accessibility and transport equity in the multimodal transportation system. As $w_{\text{max}}^a$ reduces,  the inequality in accessibility across distinct zones reduces (Figure \ref{fig:Theil_service}), while the inequality gap between the accessibility of different income groups keeps widening (Figure \ref{fig:accessibility_groups_service}-\ref{fig:Theil_service}). Consequently, as $w_{\text{max}}^a$ reduces, the spatial inequity (WITHIN component) decreases, whereas the social inequity (BETWEEN component) keeps increasing, and the overall inequity gap (Theil coefficient) first reduces and then enlarges (Figure \ref{fig:Theil_service}). This indicates that imposing a minimum service-level requirement is double-edged: on the one hand, it reduces the geographic concentration of idle vehicles and improves spatial equity; on the other hand, enforcing more idle AVs in less lucrative areas primarily benefits high-income individuals in underserved areas, who rely on AMoD much more than low-income individuals in the same zone. In this case, the accessibility improvement of low-income individuals is insignificant due to the increased TNC ride fares and high-income individuals are the primary beneficiaries of the minimum service-level requirement, which further reinforces the existing social inequity gap.

\begin{figure}[t]
     \centering
     \begin{subfigure}[b]{0.48\textwidth}
         \centering
         \includegraphics[width=\textwidth]{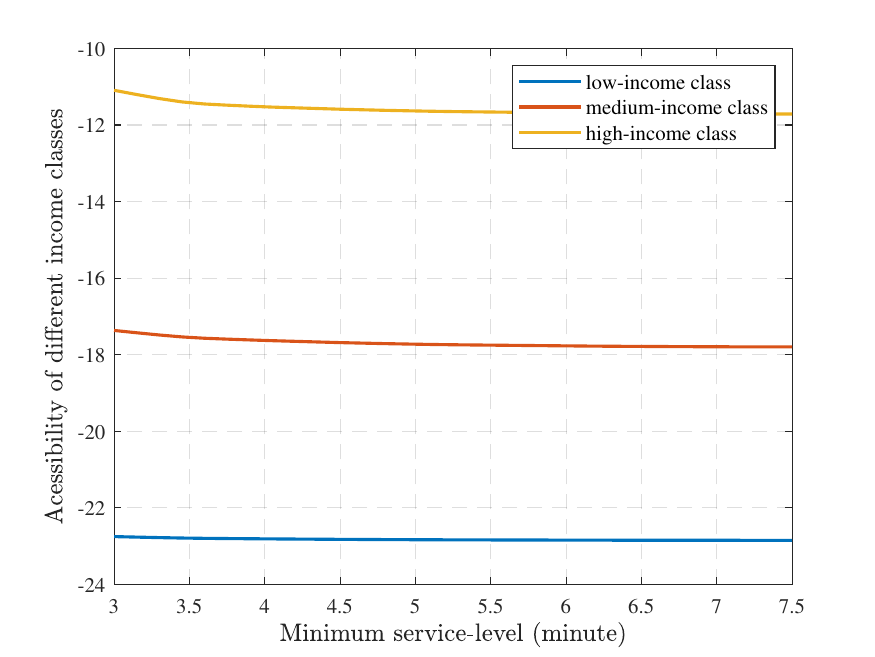}
         \caption{Average accessibility of different income groups under distinct valuse of $w_{\text{max}}^a$.}
         \label{fig:accessibility_groups_service}
     \end{subfigure}
     \hfill
     \begin{subfigure}[b]{0.48\textwidth}
         \centering
         \includegraphics[width=\textwidth]{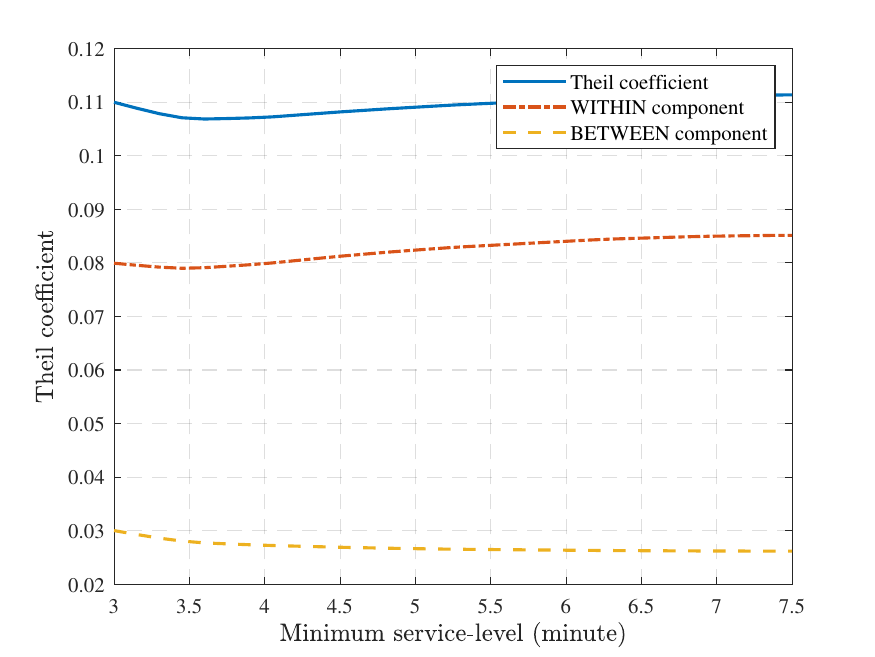}
         \caption{Theil coefficient, WITHIN component, and BETWEEN component under distinct $w_{\text{max}}^a$}
         \label{fig:Theil_service}
     \end{subfigure}
    \caption{Accessibility and transport equity under distinct $w_{\text{max}}^a$.}
    \label{fig:equity_service}
\end{figure}

\subsection{Subsidies on transit services} \label{subsidies}

To simultaneously improve spatial and social equity, we consider a subsidy on transit trips that target low-income households in underserved/remote areas. Specifically, a subsidy $s$ is provided for transit trips (including those bundled with AMoD) that originate from zone $i \in \mathcal{U}$ for income class $k=1$. Under the subsidy, the disutility/generalized travel costs of distinct mobility options can be characterized as:
\begin{subnumcases}{\label{generalized_costs_subsidy}}
    c_{ij,k}^a = \alpha_k w_i^a + \beta_k \frac{l_{ij}^a}{v_a} + \gamma_k (b + r_i^a l_{ij}^a) \\
    c_{s_i s_j,k}^p = \alpha_k w_{s_i s_j,k}^p + \beta_k \frac{l_{s_i s_j,k}^p}{v_p} + \gamma_k (r^p l_{s_i s_j,k}^p - s \mathbbm{1}_{i,k}^{\mathcal{U}}) + \theta_k \left(\frac{d_{s_i}}{v_w}+\frac{d_{s_j}}{v_w}\right) \\
    c_{s_i s_j,k}^{b_1} = \alpha_k \left(w_i^a + w_{s_i s_j,k}^p \right) + \beta_k \left(\frac{d_{s_i}}{v_a} +  \frac{l_{s_i s_j,k}^{p}}{v_p}\right) + \gamma_k \left(b + r_i^a d_{s_i} + r^p l_{s_i s_j,k}^p- s \mathbbm{1}_{i,k}^{\mathcal{U}} \right) + \theta_k \frac{d_{s_j}}{v_w} \\
    c_{s_i s_j,k}^{b_2} = \alpha_k \left( w_{s_i s_j,k}^p + w_j^a \right) + \beta_k \left( \frac{l_{s_i s_j,k}^p}{v_p} + \frac{d_{s_j}}{v_a}\right) + \gamma_k \left( r^p l_{s_i s_j,k}^p - s \mathbbm{1}_{i,k}^{\mathcal{U}} + b + r_j^a d_{s_j} \right) + \theta_k \frac{d_{s_i}}{v_w} \\
    c_{s_i s_j,k}^{b_3} = \alpha_k \left(w_i^a + w_{s_i s_j,k}^p + w_j^a\right) + \beta_k \left(\frac{d_{s_i}}{v_a} +  \frac{l_{s_i s_j,k}^p}{v_p} + \frac{d_{s_j}}{v_a}\right) + \gamma_k \left(b + r_i^a d_{s_i} + r^p l_{s_i s_j,k}^p - s \mathbbm{1}_{i,k}^{\mathcal{U}} + b + r_j^a d_{s_j} \right) \\
    c_{ij,k}^t = -\frac{1}{\eta} \log \sum_{s_i \in \mathcal{V}_t^i} \sum_{s_j \in \mathcal{V}_t^j} \exp{\left(-\eta c_{s_i s_j,k}^t\right)}, \quad t \in \{p, b_1, b_2, b_3\}
\end{subnumcases}
where $\mathbbm{1}_{i,k}^{\mathcal{U}}$ is an indicator function that equals 1 if $i\in \mathcal{U}$ and $k=1$ and 0 otherwise. The generalized costs (\ref{generalized_costs_subsidy}) under the subsidy differ from the generalized costs without regulations in the component of trip fare: low-income people enjoy a reduced fare of $s$ for transit trips (either direct transit trips or bundled trips with AMoD) starting from the underserved/remote area. 

Generally, subsidies for public transport are raised by imposing taxes on the private sector. For instance, New York City began to impose a congestion charge of \$2.75 for solo ride-hailing trips and \$0.75 for shared trips in specific areas in Manhattan in January 2019. The revenue from the congestion surcharge is collected by the Metropolitan Transportation Authority (MTA) and dedicated to improving the subway system in NYC \cite{lehe2021taxation}. In January 2020, the San Francisco County Transportation Authority (SFCTA) also imposed an ad valorem tax of 3.25\% for single TNC rides and 1.5\% for shared rides in San Francisco. The collected taxes are used to subsidize public transit and mitigate traffic congestion \cite{lehe2021taxation}. We consider a regulatory agency that imposes a per-ride tax $\tau$ of AMoD services on the TNC platform, by which the regulatory agency collects taxation to subsidize transit services for low-income people in the underserved/remote area $\mathcal{U}$. The regulatory agency imposes a revenue-neutral tax such that the tax revenue collected from AMoD services offsets the expense of subsidies on transit services. This leads to the following financial equilibrium condition for the regulatory agency:
\begin{equation} \label{economic_balance_subsidy}
    \tau \sum_{i=1}^M \sum_{j=1}^M \sum_{k=1}^K \left(\lambda_{ij,k}^a + \lambda_{ij,k}^{b_1} + \lambda_{ij,k}^{b_2} + 2\lambda_{ij,k}^{b_3} \right) = s \sum_{i=1}^M \sum_{j=1}^M \sum_{k=1}^K \mathbbm{1}_{i,k}^{\mathcal{U}} \left(\lambda_{ij,k}^p + \lambda_{ij,k}^{b_1} + \lambda_{ij,k}^{b_2} + \lambda_{ij,k}^{b_3} \right) ,
\end{equation}
The tax is levied on the TNC platform. Therefore, the profit of the TNC platform under the subsidy/tax (denoted as $\pi_s^a(\xi^a,\xi^p)$) should be reduced to:
\begin{equation} \label{AMoD_profit_subsidy}
\begin{split}
    \pi_s^a(\xi^a,\xi^p) = &\sum_{i=1}^M \sum_{j=1}^M \sum_{k=1}^K \lambda_{ij,k}^a \left(b+r_i^a l_{ij}^a\right) + \lambda_{ij,k}^{b_1}\left(b+r_i^a d_{i,k}^{b_1}\right) + \lambda_{ij,k}^{b_2} \left(b+r_j^a d_{j,k}^{b_2}\right) + \lambda_{ij,k}^{b_3} \left(2b + r_i^a d_{i,k}^{b_3} + r_j^a d_{j,k}^{b_3}\right) \\ & - N C_{av} - \tau \sum_{i=1}^M \sum_{j=1}^M \sum_{k=1}^K \left(\lambda_{ij,k}^a + \lambda_{ij,k}^{b_1} + \lambda_{ij,k}^{b_2} + 2\lambda_{ij,k}^{b_3} \right) .
\end{split}
\end{equation}
Overall, the regulatory agency controls the subsidy $s$ for transit services and the tax $\tau$ on AMoD services to improve transport equity while maintaining the financial equilibrium condition (\ref{economic_balance_subsidy}). In response to the subsidy $s$ and tax $\tau$ set by the regulatory agency, the TNC platform deploys its operational strategy to maximize its profit (\ref{AMoD_profit_subsidy}) subject to the modified generalized travel costs (\ref{generalized_costs_subsidy}) and other unchanged equilibrium conditions (\ref{logit_demand_function}) and (\ref{distance_AMoD_b})-(\ref{vehicle_hour_conservation}); the public transit agency determines the operational strategy to maximize the transit ridership subject to the modified generalized travel costs (\ref{generalized_costs_subsidy}) and other unchanged equilibrium constraints (\ref{logit_demand_function}) and (\ref{frequency_consistency})-(\ref{profit_constraint}). Given the subsidy $s$ and tax $\tau$, the profit maximization problem of the TNC platform and the ridership maximization problem of the transit agency constitute a game problem. Note that the subsidy $s$ and tax $\tau$ do not alter the structure of the profit maximization problem of the TNC and the ridership maximization problem for public transit. We can still use Algorithm \ref{algorithm1} to compute the Nash equilibrium and perform an ex-post evaluation on the obtained equilibrium. Since the Nash equilibrium depends on both $s$ and $\tau$, the total taxes (left-hand side of Equation (\ref{economic_balance_subsidy})) and the total subsidies (right-hand side of Equation (\ref{economic_balance_subsidy})) at equilibrium should be a function of $(s,\tau)$. For a specific subsidy level $s$, we use the bisection method to find the corresponding tax $\tau$ that satisfies the financial equilibrium condition (\ref{economic_balance_subsidy}).

To investigate the impacts of subsidies for transit services, we focus on the nominal case with $C_{av}=20\$/\text{hour}$ and keep other model parameters unchanged as in Section \ref{case study}. For distinct values of $s$, we find the corresponding tax $\tau$ that satisfies the financial equilibrium (\ref{economic_balance_subsidy}) for the regulatory agency, solve the game problem under distinct $(s,\tau)$, and demonstrate how the subsidy/tax affects the market outcomes and transport equity in Figure \ref{fig:regulatory_agency_subsidy}-\ref{fig:equity_subsidy}.

\begin{figure}[h!]
     \centering
     \begin{subfigure}[b]{0.48\textwidth}
         \centering
         \includegraphics[width=\textwidth]{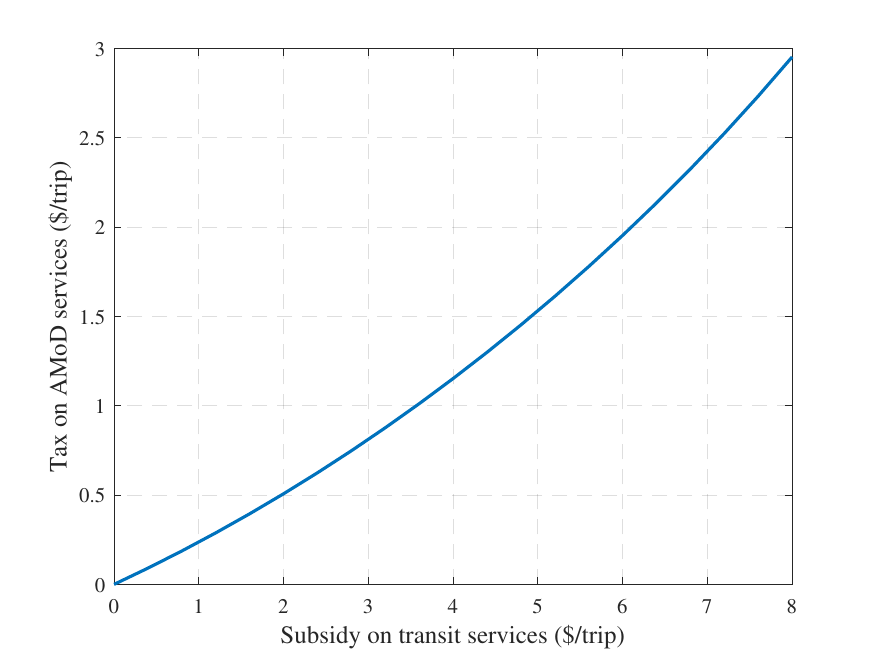}
         \caption{Tax on AMoD services under distinct values of $s$.}
         \label{fig:tax_subsidy}
     \end{subfigure}
     \hfill
     \begin{subfigure}[b]{0.48\textwidth}
         \centering
         \includegraphics[width=\textwidth]{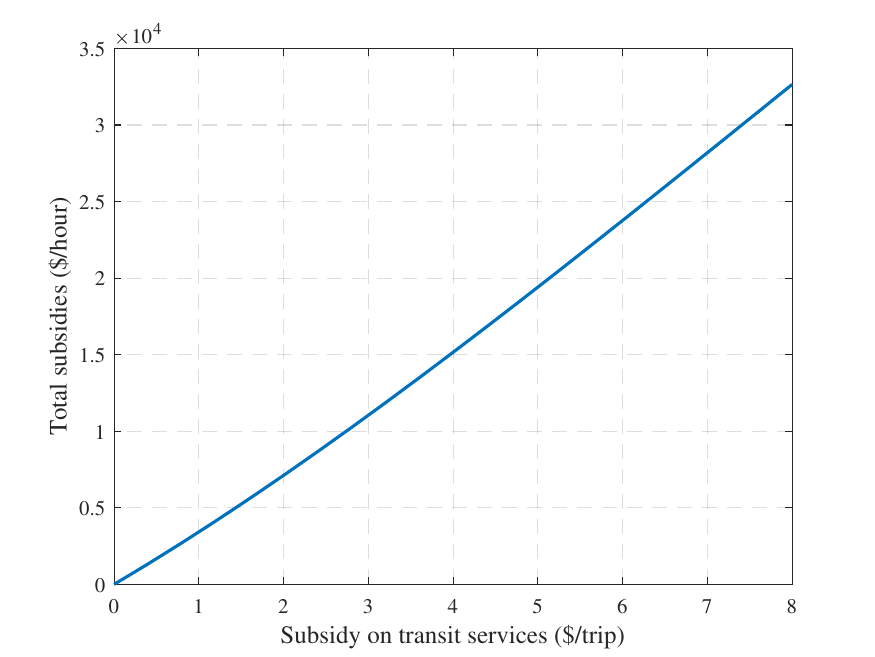}
         \caption{Total subsidies under distinct values of $s$}
         \label{fig:Total_subsidy_subsidy}
     \end{subfigure}
    \caption{Tax on AMoD services and total subsidies under distinct $s$.}
    \label{fig:regulatory_agency_subsidy}
\end{figure}

\begin{figure}[h!]
     \centering
     \begin{subfigure}[b]{0.48\textwidth}
         \centering
         \includegraphics[width=\textwidth]{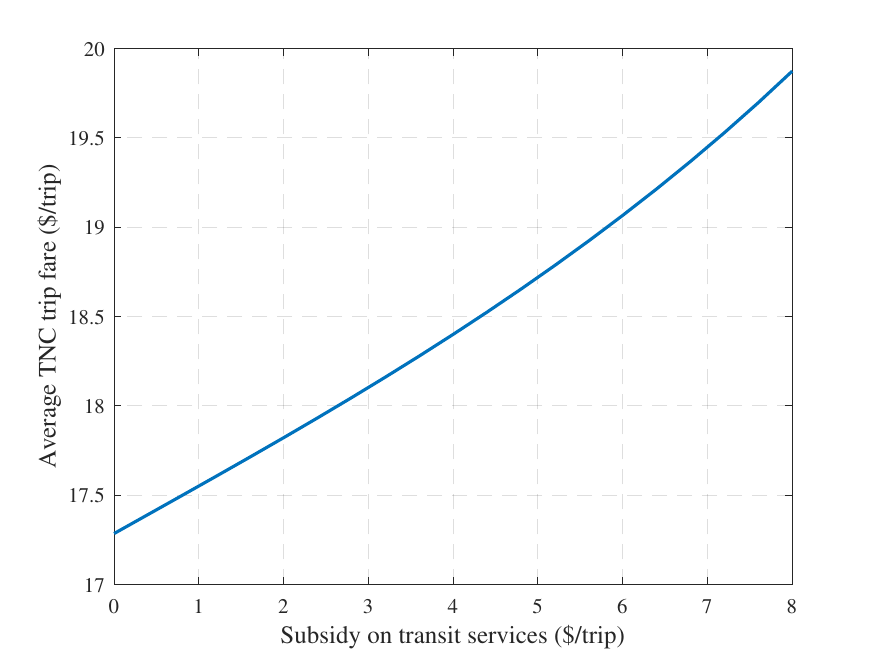}
         \caption{{Average trip fare of the ride-hailing services under distinct values of subsidy $s$.}}
         \label{fig:base_fare_subsidy}
     \end{subfigure}
     \hfill
     \begin{subfigure}[b]{0.48\textwidth}
         \centering
         \includegraphics[width=\textwidth]{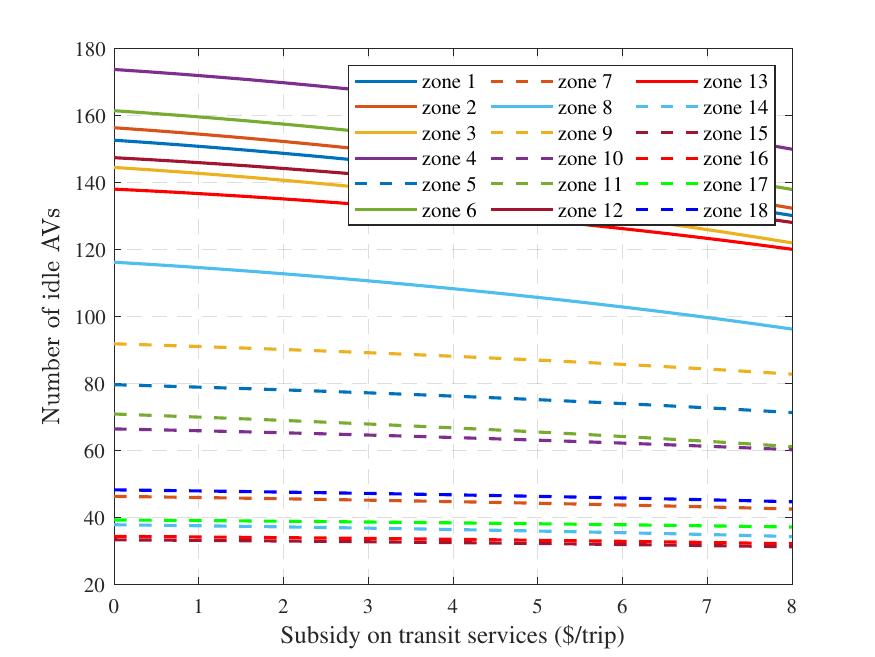}
         \caption{Number of idle vehicles in different zones of the transport network under distinct values of subsidy $s$.}
         \label{fig:number_idle_vehicle_subsidy}
     \end{subfigure}
    \begin{subfigure}[b]{0.48\textwidth}
         \centering
         \includegraphics[width=\textwidth]{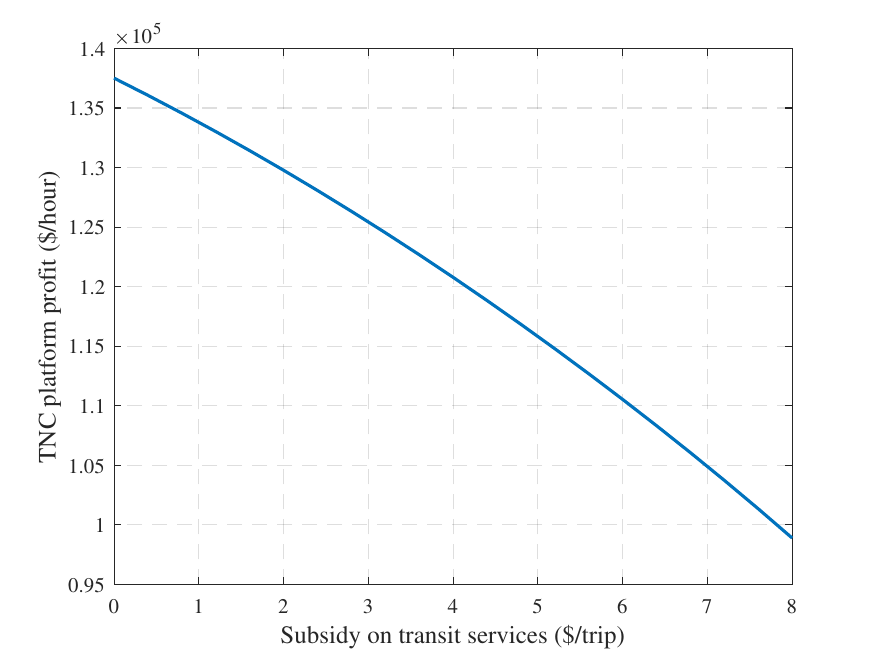}
         \caption{Hourly profit of the TNC platform under distinct values of subsidy $s$.}
         \label{fig:TNC_profit_subsidy}
     \end{subfigure}
     \hfill
     \begin{subfigure}[b]{0.48\textwidth}
         \centering
         \includegraphics[width=\textwidth]{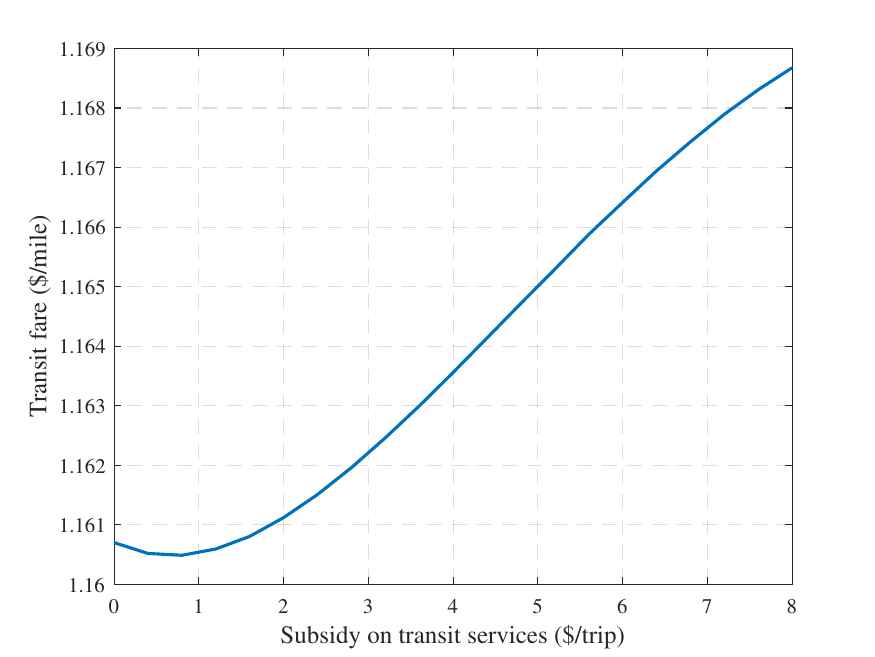}
         \caption{Trip fare (per-distance) of transit services under different values of  subsidy $s$.}
         \label{fig:transit_fare_subsidy}
     \end{subfigure}
     \begin{subfigure}[b]{0.48\textwidth}
     \centering
     \includegraphics[width=\textwidth]{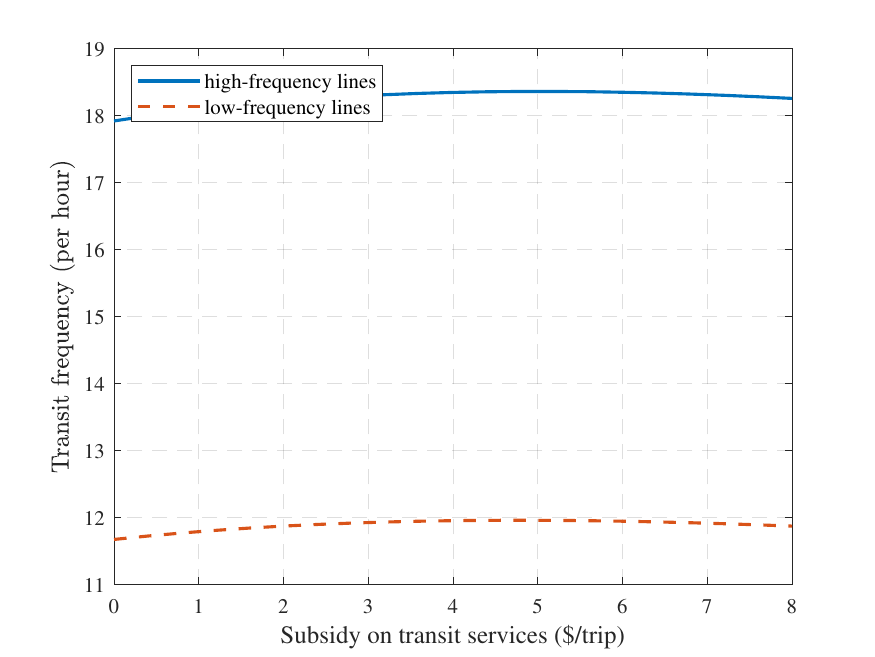}
     \caption{Frequencies of different transit lines under distinct values of subsidy $s$.}
     \label{fig:transit_frequency_subsidy}
    \end{subfigure}
    \hfill
    \begin{subfigure}[b]{0.48\textwidth}
         \centering
         \includegraphics[width=\textwidth]{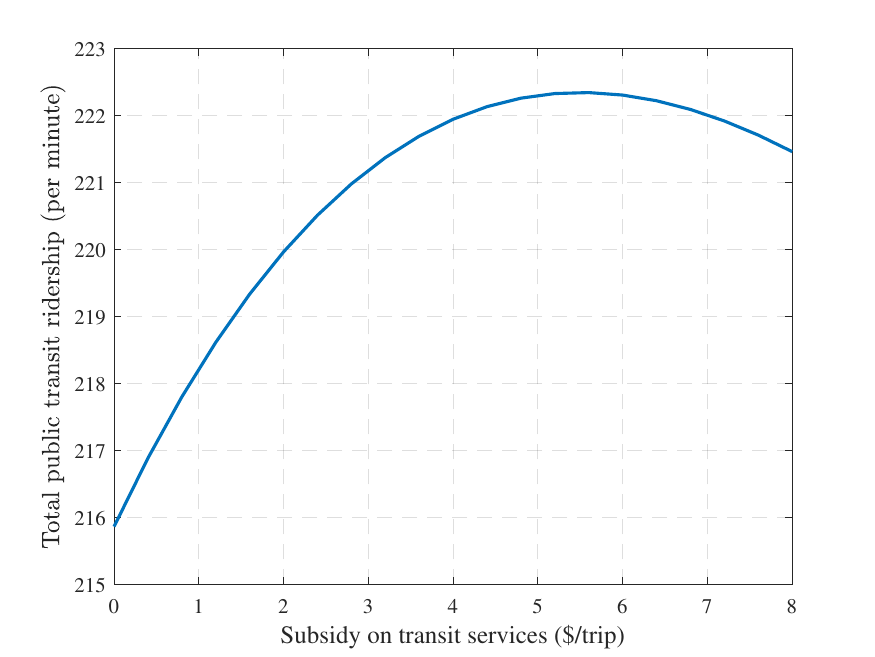}
         \caption{Total public transit ridership under distinct values of subsidy $s$.}
         \label{fig:transit_ridership_subsidy}
    \end{subfigure}
    \caption{Operational decisions of the TNC platform and the public transit agency under distinct $s$.}
    \label{fig:TNC_subsidy}
\end{figure}

\begin{figure}[h!]
     \centering
     \begin{subfigure}[b]{0.48\textwidth}
         \centering
         \includegraphics[width=\textwidth]{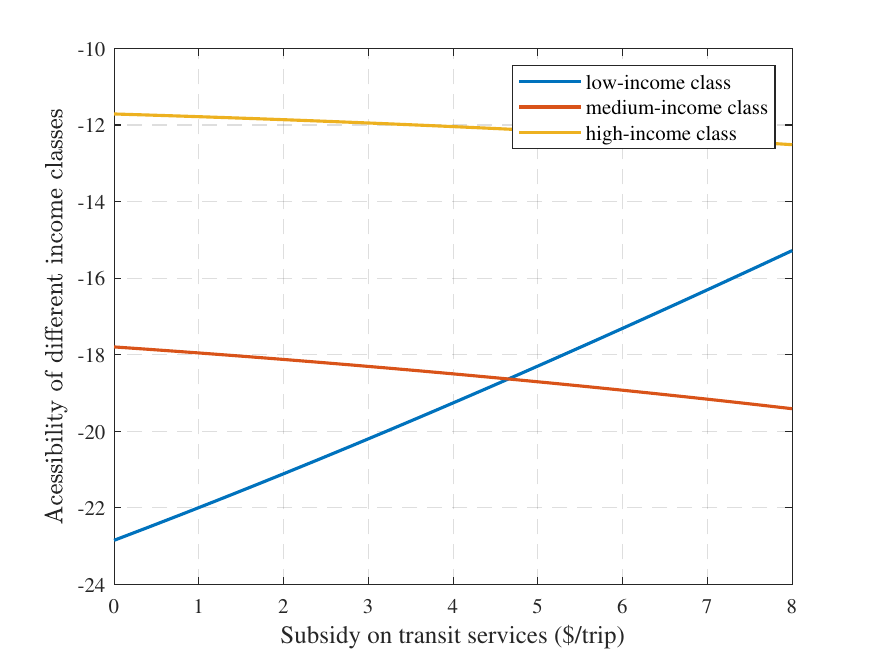}
         \caption{Average accessibility of different income groups under distinct values of $s$.}
         \label{fig:accessibility_groups_subsidy}
     \end{subfigure}
     \hfill
     \begin{subfigure}[b]{0.48\textwidth}
         \centering
         \includegraphics[width=\textwidth]{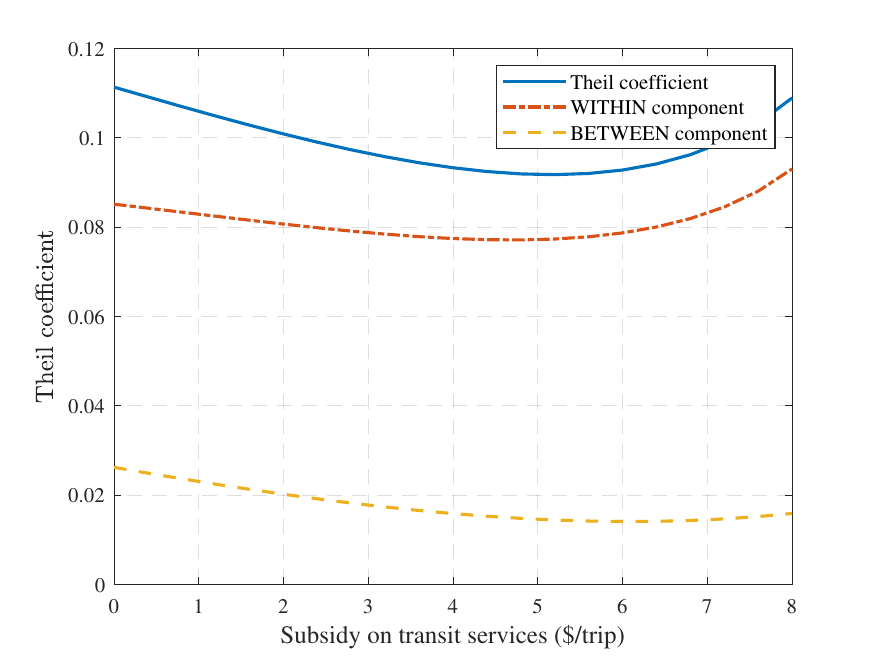}
         \caption{Theil coefficient, WITHIN component, and BETWEEN component under distinct values of $s$}
         \label{fig:Theil_subsidy}
     \end{subfigure}
    \caption{Accessibility and transport equity under distinct $s$.}
    \label{fig:equity_subsidy}
\end{figure}

Figure \ref{fig:tax_subsidy} shows the corresponding tax on AMoD services under distinct $s$ set by the regulatory agency and Figure \ref{fig:Total_subsidy_subsidy} presents the total subsidies on transit services as a function of $s$. As the subsidy increases, the total expense of subsidizing transit services increases (Figure \ref{fig:Total_subsidy_subsidy}) and thereby the regulatory agency needs to impose a higher tax on AMoD services to maintain the financial equilibrium (Figure \ref{fig:tax_subsidy}). Besides, a small tax on AMoD services enables the regulatory agency to fund a relatively high level of subsidy on transit services. This is because the subsidy on transit services specifically target low-income households in underserved/remote areas.

Figure \ref{fig:base_fare_subsidy}-\ref{fig:TNC_profit_subsidy} shows TNC's operational strategy and the corresponding market outcomes under distinct $s$. As the subsidy increases, the TNC raises the trip fare (Figure \ref{fig:base_fare_subsidy}), reduces the number of idle vehicles in distinct zones (Figure \ref{fig:number_idle_vehicle_subsidy}), and earns a reduced profit under a higher subsidy level (Figure \ref{fig:TNC_profit_subsidy}). These results are intuitive since a higher subsidy on transit services demands a higher tax on AMoD services, which increases the tax burden on the TNC platform and promotes it to increase the trip fare and reduce the vehicle supply to guarantee profitability. Interestingly, the reduction in the number of idle AVs in remote zones (dashed lines in Figure \ref{fig:number_idle_vehicle_subsidy}) is insignificant compared to that in core zones (solid lines in Figure \ref{fig:number_idle_vehicle_subsidy}). Although subsidizing transit services imposes a tax on AMoD services which curbs the TNC platform's ride-hailing business, it reduces the monetary cost of bundled services that combine AMoD and public transit for low-income passengers in underserved/remote zones and helps to promote AMoD services in these areas. To mitigate the negative impacts of imposed tax on AMoD services and leverage the benefits of subsidy on bundled services, the TNC platform primarily reduces the number of idle AVs in core areas while maintaining the vehicle supply in underserved/remote areas. 

Figure \ref{fig:transit_fare_subsidy}-\ref{fig:transit_ridership_subsidy} demonstrates the public transit agency's operational decisions and the corresponding outcomes under distinct $s$. 
As the subsidy level improves, the transit agency raises the transit fare (Figure \ref{fig:transit_fare_subsidy}). The service frequencies of distinct transit lines initially increase and then drop with the subsidy (Figure \ref{fig:transit_frequency_subsidy}). The total public transit ridership under different subsidies exhibits two distinct regimes: when $s\leq \$5.6$/trip, a higher subsidy on transit services yields a higher transit ridership; when $s> \$5.6$\$/trip, the transit ridership decreases with the subsidy (Figure \ref{fig:transit_ridership_subsidy}). We argue that this is again attributed to the competitiveness and complementarity between AMoD services and public transit. Note that the total public transit ridership consists of the ridership of direct transit services (mode $p$) and the ridership of bundled services (mode $b$). The regulatory agency subsidizes transit services for low-income households in underserved/remote areas while taxing AMoD services. The subsidy improves transit services and strengthens the competitiveness of public transit (direct transit services, i.e., mode $p$) to AMoD services (direct AMoD services, i.e., mode $a$), while the tax deteriorates AMoD services and suppresses the complementarity between AMoD services and public transit (bundled services, i.e., mode $b_1$, $b_2$, and $b_3$). When the subsidy/tax is relatively low, the increased competitiveness of public transit to AMoD services due to the subsidy is more significant compared to the reduced complementarity caused by the tax. It attracts more passengers, especially low-income passengers in underserved/remote areas, to choose direct transit services, although the ridership of bundled services slightly drops, which leads to an increased total public transit ridership. However, when the subsidy/tax is high, the imposed tax largely worsens AMoD services, and the reduced complementarity between AMoD services and public transit gradually becomes the dominant factor. In this case, the number of passengers, especially medium-income and high-income passengers in core areas, choosing bundled services significantly decreases, and the total transit ridership reduces with the subsidy.

Figure \ref{fig:equity_subsidy} shows the impacts of subsidies/taxes on accessibility and transport equity in the multimodal transportation system. As the subsidy increases, the accessibility of high-income and medium-income classes slightly drops, while the accessibility of low-income passengers significantly increases (Figure \ref{fig:accessibility_groups_subsidy}). The spatial inequality (WITHIN component in Figure \ref{fig:Theil_subsidy}) and social inequality (BETWEEN component in Figure \ref{fig:Theil_subsidy}) first decline and then increase, which leads to an initial decrease and then an increase in overall transport equity (Theil coefficient in Figure \ref{fig:Theil_subsidy}). The equity impacts of subsidies on transit services are natural. The regulatory agency subsidizes transit services and taxes AMoD services at the same time. The subsidy on transit services is devoted to low-income passengers from underserved/remote areas, which reduces the generalized costs of pubic transit and bundled services. Low-income transit-dependent individuals in underserved/remote areas are the direct beneficiaries of the subsidy. When the subsidy is relatively low, the accessibility of low-income passengers in underserved/remote areas increases with the subsidy, which bridges the existing spatial and social inequity gaps at the same time. However, when transit services are over-subsidized (correspondingly, AMoD services are over-taxed), the excessive tax significantly deteriorates AMoD services, especially in core areas, and hurts medium-income and high-income classes. This will enlarge the disparity in accessibility across distinct income classes and geographic zones and exacerbate spatial and social inequity. The regulatory agency should carefully adjust its control knobs to lead to a subsidy/tax level that archives an equitable distribution of accessibility across distinct income classes and geographic regions.

\section{Conclusion} \label{conclusion}

This paper assesses the equity impacts of AVs and investigates regulatory policies that guarantee the benefits of AV deployment reach underserved areas and transportation-disadvantaged groups. A network game-theoretic model is formulated to characterize the intimate interactions among TNC's location-differentiated price, the transit fare, services frequencies of distinct transit lines, waiting times for AMoD services, waiting times for transit services, TNC fleet size, the boarding/alighting station decisions of public transit, the disutility/generalized costs of distinct mobility modes, and the modal share of different income classes in the multimodal transportation system, by which the incentives of multiclass passengers, the TNC, and the public transit agency are captured. An algorithm is developed to compute the Nash equilibrium and conduct an ex-post evaluation of the performance of the obtained Nash equilibrium. Theil coefficient is utilized to quantify both the spatial and social inequality in individuals' accessibility in the multimodal transportation system.

Based on the developed framework, we reveal the spatial and social inequity gaps without regulatory interventions and investigate regulatory policies that improve transport equity. Through numerical study, we showed that although individuals' accessibility improves, spatial and social inequity gaps enlarge as AV technology evolves. The for-profit nature of TNC leads to the geographic concentration of AMoD services and exacerbates spatial inequity. In the meanwhile, the competition between AMoD and public transit disrupts transit services and the benefits of reduced costs for AMoD services are primarily distributed to higher-income individuals who are less transit-dependent, which expands the social inequity gap. The impacts of two regulatory policies are evaluated: (a) a minimum service-level requirement on AMoD services; and (b) a subsidy on transit services by taxing AMoD services. We showed that a minimum service-level requirement incurs a trade-off: as a higher minimum service level is required, the spatial inequity decreases while the social inequity keeps increasing. Therefore, the regulatory agency should evaluate the trade-off between spatial equity and social equity and carefully control the minimum service-level requirement. On the other hand, subsidizing transit services for low-income individuals in underserved areas using taxes on AMoD services can promote the use of public transit and improve spatial and social equity at the same time. However, when transit services are over-subsidized, excessive taxation significantly deteriorates AMoD services, which sacrifices the accessibility of medium-income and high-income classes and enlarges the spatial and social inequity gaps. The regulatory agency should choose the proper subsidy/tax level to reach an equitable distribution of accessibility across distinct income classes and geographic zones.

This paper delivers a comprehensive socioeconomic analysis of AV-enabled mobility future, with a special focus on transport equity. One future extension is extending the model to capture a mixture of AVs and human drivers in a TNC market and consider equity from both passenger and driver sides. Another extension would be incorporating other socio-economic factors (other than income levels) into the study of social equity.

\section*{Acknowledgments}  {This research was supported by Hong Kong Research Grants Council under project 16202922, 26200420, and National Science Foundation of China under project 72201225.}

\bibliographystyle{unsrt}
\bibliography{reference}


\subsection*{\bf{Appendix A: List of notations}}

\begin{longtable}{r p{15cm}}
\caption{The list of notations.} \label{tab:notation_table} \\
\hline
\multicolumn{2}{c}{\bf{Decision Variables}}\\
\hline
$b$ & Base fare (\$/trip) of AMoD services \\
$r_i^a$ & Per-distance fare (\$/mile) of AMoD services starting from zone $i$ \\
$N_i^I$ & Number of idle AVs deployed in zone $i$ \\
$N$ & Total number of AVs \\  
$\xi^a$ & Operational strategy of the TNC platform \\
$r^p$ & Per-distance fare of transit services \\
$f_l$ & Service frequency of transit line $l$ \\
$\xi^p$ & Operational strategy of the public transit agency \\
\hline
\multicolumn{2}{c}{\bf{Endogenous Variables}}\\
\hline
$c_{s_i s_j,k}^t$ & Expected generalized cost of passengers in income class $k$ choosing mode $t$ from zone $i$ to zone $j$ with boarding station $s_i$ and alighting station $s_j$ \\
$c_{ij,k}^t$ & Expected generalized cost of passengers in income class $k$ choosing mode $t$ from zone $i$ to zone $j$ \\
$\lambda_{ij,k}^t$ & Arrival rate (per minute) of passengers in income class $k$ choosing mode $t$ from zone $i$ to zone $j$ \\
$m_i^{c-v}$ & Matching rate between passengers and vehicles in zone $i$ \\
$N_i^a$ & Number of waiting passengers in zone $i$ \\
$\lambda_i$ & Average arrival rate of waiting passengers/idle vehicles in zone $i$ \\
$w_i^v$ & Average vehicle idle time in zone $i$ \\
$w_i^a$ & Average waiting time (minute) for AMoD services in zone $i$ \\
$w_{s_i s_j,k}^p$ & Average waiting time (minute) of passengers in income class $k$ for transit services from boarding station $s_i$ to alighting station $s_j$ \\
$l_{s_i s_j,k}^p$ & Average trip distance of passengers in income class $k$ by public transit from boarding station $s_i$ to alighting station $s_j$ \\
$l_{ij,k}^t$ & Average trip distance by transit from zone $i$ to zone $j$ for income class $k$ when choosing mode $t$ \\
$d_{i,k}^{b_1}$ & Average access distance by AMoD in origin zone $i$ when choosing mode $b_1$ \\
$d_{j,k}^{b_2}$ & Average egress distance by AMoD in destination zone $i$ when choosing mode $b_2$ \\
$d_{i,k}^{b_3}$ & Average access distance by AMoD in origin zone $i$ when choosing mode $b_3$ \\
$d_{j,k}^{b_3}$ & Average egress distance by AMoD in destination zone $j$ when choosing mode $b_3$ \\
$\mathbb{P}_{s_i s_j,k}^t$ & Probability of selecting boarding station $s_i$ and alighting station $s_j$ for passengers in income class $k$ by mode $t$ from zone $i$ to zone $j$ \\
$\pi^a$ & Profit (per hour) of the TNC platform \\
$\pi^p$ & Total ridership (per minute) of public transit \\
$A_{ij,k}$ & Accessibility of passengers in income class $k$ from zone $i$ to zone $j$ \\
$A_{i,k}$ & Average accessibility of passengers in income class $k$ starting from zone $i$ \\
$A_k$ & Average accessibility of passengers in income class $k$ \\
$\overline{A}$ & Average accessibility of all passengers \\
$T$ & Theil coefficient of accessibility distribution in the multimodal transportation system \\
\hline
\multicolumn{2}{c}{\bf{Auxiliary Variables}}\\
\hline
$\mathcal{L}_s$ & Set of attractive lines at station $s$ \\
$w_s^p$ & Expected waiting time at station $s$ in the transit network \\
$\mathbb{P}_s^l$ & Probability of passengers boarding line $l$ at station $s$ \\
$f_a$ & Frequency of transit link $a$ \\
$g_s$ & Net outflow of passengers at station $s$ \\
$v_{ak}$ & Flow of passengers in income class $k$ on transit link $a$ \\
$w_{sk}^p$ & Expected waiting time of passengers in income class $k$ at station $s$ \\
$\mathcal{E}_t^{s+}$ & Set of outgoing transit links at station $s$ \\
$\mathcal{E}_t^{s-}$ & Set of incoming transit links at station $s$ \\
$b_j$ & Base fare (\$/trip) of AMoD trips destined to zone $j$ \\
$r_{ij}^a$ & Ride fare (\$/mile) of AMoD trips from zone $i$ to zone $j$ \\
$N_{ij}^I$ & Number of idle AVs deployed for zone $i$ to zone $j$ \\
$\xi_j^a$ & Operational strategy of the TNC platform exclusively for destination zone $j$ \\
$\xi_{ij}^a$ & Operational strategy of the TNC platform exclusively for OD pair $ij$ \\
$\pi_j^a$ & Profit of the TNC platform from destination zone $j$ \\
$\pi_{\mathcal{V}_x}^a$ & Profit of the TNC platform from the set of destination zones $\mathcal{V}_x$ \\
$\pi_{ij}^a$ & Profit of the TNC platform from OD pair $ij$ \\
$\pi_{i,\mathcal{V}_x}^a$ & Profit of the TNC platform from serving trips from zone $i$ to destination zones $\mathcal{V}_x$ \\
$\overline{\mathcal{V}}$ & Partition of the set of zones $\mathcal{V}$ \\
$\bar{\pi}^a$ & Upper bound (\$/hour) of TNC platform profit \\
$\underline{\pi}^a$ & Lower bound (\$/hour) of TNC platform profit \\
$\epsilon$ & Value of $\epsilon$ in calculating $\epsilon$-Nash equilibrium \\
\hline
\multicolumn{2}{c}{\bf{Exogenous Parameters}}\\
\hline
$M$ & Number of zones in the city \\
$K$ & Number of income classes among populations \\
$L$ & Number of transit lines in the transit network \\
$A_i$ & Scaling parameter in the AMoD waiting time function \\
$\mathcal{T}$ & Set of modes in the multimodal transportation system \\
$\mathcal{V}$ & Set of indexes of all zones \\
$\mathcal{L}$ & Set of transit lines \\
$\mathcal{L}_H$ & Set of high-frequency transit lines \\
$\mathcal{L}_L$ & Set of low-frequency transit lines \\
$\mathcal{V}_t$ & Set of transit stations \\
$\mathcal{E}_t$ & Set of transit links \\
$\mathcal{U}$ & Set of indexes of zones in underserved area \\
$a_1,a_2$ & Elasticity parameters in AMoD waiting time function \\
$c_{ij,k}^o$ & Expected generalized cost of outside option for passengers in income class $k$ from zone $i$ to zone $j$ \\
$\lambda_{ij,k}^0$ & Arrival rate (per minute) of potential passengers in income class $k$ from zone $i$ to zone $j$ \\
$\eta$ & Parameter of the boarding/alighting station choice logit model \\
$\mu$ & Parameter of the passenger demand logit model \\
$\alpha_k$ & Cost parameter of waiting time for passengers in income class $k$ \\
$\beta_k$ & Cost parameter of in-vehicle time for passengers in income class $k$ \\
$\gamma_k$ & Cost parameter of trips fares for passengers in income class $k$ \\
$\theta_k$ & Cost parameter of walking time for travelers in group $k$ \\
$l_{ij}^a$ & Average distance (mile) of AMoD trips from zone $i$ to zone $j$ \\
$l_a^p$ & Length of transit link $a$ \\
$d_{s_i}$ & Distance from the zone centroid $i$ to station $s_i$ \\
$v_a$ & Average speed (mile/hour) of TNC AVs \\
$v_p$ & Average operating speed (mile/hour) of the transit network \\
$v_w$ & Average walking speed \\
$\pi_0$ & Budget (\$/hour) on the operation of public transit\\
$C_{av}$ & Average operating cost (\$/hour) of an TNC AV \\
$C_l$ & Average per-vehicle operating cost (\$/hour) of transit line $l$ \\
$w_{\text{max}}^a$ & Maximum allowable waiting time (minute) for AMoD services \\
$s$ & Subsidy (\$/trip) on transit services for low-income passengers from underserved areas \\
$\tau$ & Tax (\$/trip) on AMoD services \\ 
\hline
\end{longtable}

\subsection*{\bf{Appendix B: Proof of Proposition \ref{proposition_upper_lower_bound}}}

We prove the result of Proposition \ref{proposition_upper_lower_bound} in the following two steps:
(1) $\sum_{x=1}^m \pi_{\mathcal{V}_x}^{a^*}$ is the optimal value of the relaxed problem (\ref{Incentives_AMoD_relaxed}) given $\xi^{p^*}$. Since (\ref{Incentives_AMoD_relaxed}) is a relaxed problem of (\ref{Incentives_AMoD_equivalent}), and $\xi^{a^*}$ is the optimal solution to (\ref{Incentives_AMoD_equivalent}) given $\xi^{p^*}$, then we have $\sum_{x=1}^m \pi_{\mathcal{V}_x}^{a^*} \geq \sum_{j=1}^M \pi_j^a \left(\xi^{a^*}\right) = \pi^a\left(\xi^{a^*},\xi^{p^*}\right)$.
(2) Since $\xi_{j,j\in\mathcal{V}_x}^{a^*}, \forall x=1,\dots,m$, is a feasible solution to (\ref{Incentives_AMoD_equivalent}), a non-negative linear combination of $\xi_{j,j\in\mathcal{V}_x}^{a^*},x=1,\dots,m$ is also a feasible solution to (\ref{Incentives_AMoD_equivalent}). Therefore, $\sum_{j=1}^M \pi_j^a \left(\frac{\sum_{x=1}^m \pi_{\mathcal{V}_x}^{a^*} \xi_{j,j \in \mathcal{V}_x}^{a^*}}{\sum_{x=1}^m \pi_{\mathcal{V}_x}^{a^*}}\right) \leq \sum_{j=1}^M \pi_j^a \left(\xi^{a^*}\right) = \pi^a\left(\xi^{a^*},\xi^{p^*}\right) $. This completes the proof.

\newpage

\subsection*{\bf{Appendix C: Algorithm \ref{algorithm1}}}

\begin{algorithm}
\caption{Best response algorithm and the ex-post evaluation on Nash equilibrium} \label{algorithm1}
\begin{algorithmic}[1]
\REQUIRE initial guess of operational strategy $\xi_{(0)}^a=\left(b,\mathbf{r^a},\mathbf{N^I}\right)$ and $\xi_{(0)}^p=\left(r^p,\mathbf{f}\right)$, the convergence tolerance $\sigma$, the maximum number of iterations $\hat{n}$, a partition $\overline{\mathcal{V}}=\left\{\mathcal{V}_1,\mathcal{V}_2,\dots,\mathcal{V}_m\right\}$ of $\mathcal{V}$. 
\STATE Setup stopping criterion: $||\xi_{(n)}^a-\xi_{(n-1)}^a||_2 \leq \sigma$ and $||\xi_{(n)}^p-\xi_{(n-1)}^p||_2 \leq \sigma$
\FOR {$n=1,\dots,\hat{n}$} 
\STATE Fix $\xi^p=\xi_{(n-1)}^p$, solve (\ref{Incentives_AMoD}) using interior-point algorithm, and obtain the optimal solution $\xi_{(n)}^a$.
\STATE Fix $\xi^a=\xi_{(n)}^a$, solve (\ref{Incentives_PT}) by grid search methods, and obtain the globally optimal solution $\xi_{(n)}^p$.
\IF {stopping criterion satisfied}
\STATE Obtain the candidate equilibrium strategy $\xi^{a^*}=\xi_{(n)}^a$ and $\xi^{p^*}=\xi_{(n)}^p$.
\BREAK
\ENDIF
\ENDFOR
\FOR {$x=1,\dots,m$}
\STATE Solve (\ref{Incentives_AMoD_relaxed_subproblem}) given $\xi^p=\xi^{p^*}$ using primal decomposition algorithm in Section \ref{solution method}.
\STATE Obtain the optimal solution $\xi_{j,j\in\mathcal{V}_x}^{a^*}$ and the corresponding optimal value $\pi_{\mathcal{V}_x}^{a^*}$.
\ENDFOR
\STATE Calculate the upper bound $\overline{\pi}^a=\sum_{x=1}^m \pi_{\mathcal{V}_x}^{a^*}$ and the lower bound $\underline{\pi}^a=\pi^a \left(\frac{\sum_{x=1}^m \pi_{\mathcal{V}_x}^{a^*} \xi_{j,j \in \mathcal{V}_x}^{a^*}}{\sum_{x=1}^m \pi_{\mathcal{V}_x}^{a^*}}, \xi^{p^*} \right)$.
\STATE Compute the value $\epsilon=\bar{\pi}^a-\pi^a(\xi^{a^*},\xi^{p^*})$.
\ENSURE the $\epsilon$-Nash equilibrium strategy $\xi^*=(\xi^{a^*},\xi^{p^*})$, the platform profit $\pi^a(\xi^{a^*},\xi^{p^*})$ and its upper bound $\overline{\pi}^a$ and lower bound $\underline{\pi}^a$, the transit ridership $\pi^p(\xi^{a^*},\xi^{p^*})$, the value of $\epsilon$.
\end{algorithmic}
\end{algorithm}

\newpage

\subsection*{\bf{Appendix D: Correspondence between Zone Number and Postal Code}}

The following Table \ref{mytable} describes the correspondences between postal code and zone number.
    \begin{table}[!h]
    \setlength{\abovecaptionskip}{0.3cm}
    \setlength{\belowcaptionskip}{0.5cm}
        \centering
        \caption{The correspondence between zone number and postal code}
        \begin{tabular}{c|c}
            1 & 94111\tablefootnote{We merge zip code zones 94104, 94105, and 94111 as an aggregated zone with postal code 94111.} \\
            2 & 94103 \\
            3 & 94109 \\
            4 & 94115\tablefootnote{We merge zip code zones 94115 and 94123 as an aggregated zone with postal code 94115.} \\
            5 & 94118 \\
            6 & 94133\tablefootnote{We merge zip code zones 94108 and 94133 as an aggregated zone with postal code 94133.} \\
            7 & 94121 \\
            8 & 94102 \\
            9 & 94117 \\
            10 & 94122 \\
            11 & 94114 \\
            12 & 94107 \\
            13 & 94110 \\
            14 & 94131 \\
            15 & 94116 \\
            16 & 94124 \\
            17 & 94132 \\
            18 & 94112
        \end{tabular}
    \label{mytable}
    \end{table}

\subsection*{\bf{Appendix E: Impacts of economies of scales of AMoD services}}

To illustrate the impacts of economies of scale for AMoD services, we conduct a sensitivity analysis on the elasticity parameter of the passenger-vehicle Cobb-Douglas matching function (\ref{cobb_douglas_function}). Note that in the matching function (\ref{cobb_douglas_function}), $a_1$ and $a_2$ are the elasticity parameters which reflect the economies of scale of AMoD services. A larger $a_1$ and/or $a_2$ indicates a more efficient matching between waiting passengers and idle vehicles, and thereby a stronger economy of scale. We fix $a_1=1$ and vary the values of $a_2$ to investigate the impacts of scale economies. This leads to the following general passenger waiting function for AMoD services:
\begin{equation} \label{waiting_time_AMoD_general}
    w_i^a = \frac{1}{A_i \left(N_i^I\right)^{a_2}} .
\end{equation}
Note that the waiting time function (\ref{waiting_time_AMoD}) corresponds to a special case of function (\ref{waiting_time_AMoD_general}) with $a_2=0.5$. Figure \ref{fig:economy_scale} shows the spatial distribution of idle AVs and the equity results under distinct values of $a_2$. The impacts of economies of scale are summarized below:
\begin{itemize}
    \item As the economies of scale enhances (as $a_2$ increases), the vehicle supply reduces but the spatial disparity in AMoD services enlarges (Figure \ref{fig:idle_AV_scale_04}-\ref{fig:idle_AV_scale_06}). When TNC services exhibit a stronger economy of scale, the TNC platform has a higher utilization of AVs, which enables it to reduce the supply of AVs for cost reduction. On the other hand, it floors its idle vehicles into high-demand areas to leverage the benefits of enhanced economies of scale to the largest extent.
    \item An enhanced economies of scale increases spatial and social inequity simultaneously and leads to exacerbated transport equity (Figure \ref{fig:WITHIN_scale}-\ref{fig:T_scale}). A stronger economy of scale of AMoD services exacerbates the geographic concentration of idle AVs, which contributes to spatial inequity. Furthermore, the spatial and socioeconomic characteristics of a city admits strong correlations: high-demand areas are usually well-served by transit and has a large population of high-income individuals. Therefore, the spatial disparity in AMoD services also enlarges social inequity gaps.
\end{itemize}

\begin{figure}[h!]
     \centering
     \begin{subfigure}[b]{0.32\textwidth}
         \centering
         \includegraphics[width=\textwidth]{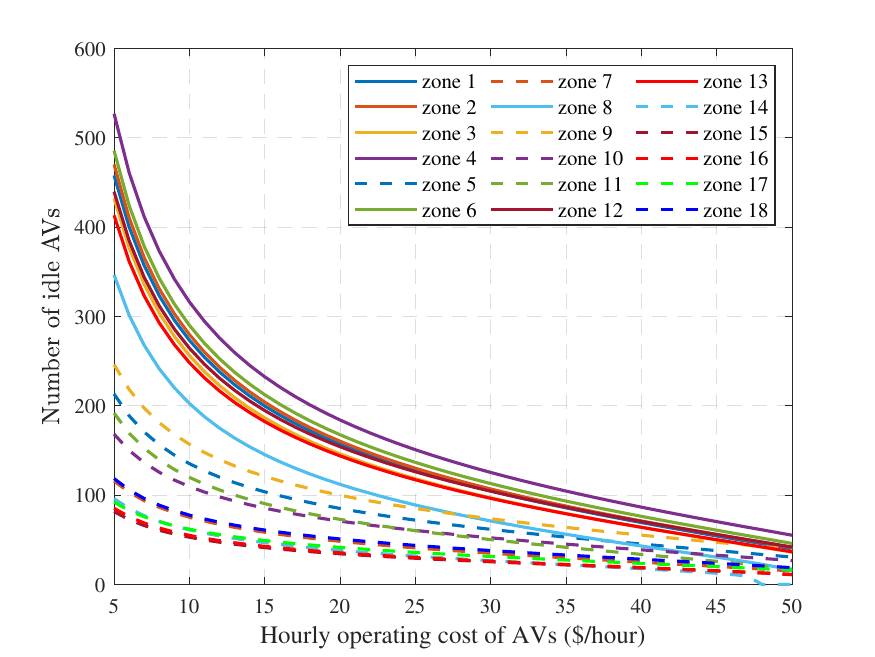}
         \caption{{Spatial distribution of idle AVs under $a_2=0.4$.}}
         \label{fig:idle_AV_scale_04}
     \end{subfigure}
     \hfill
     \begin{subfigure}[b]{0.32\textwidth}
         \centering
         \includegraphics[width=\textwidth]{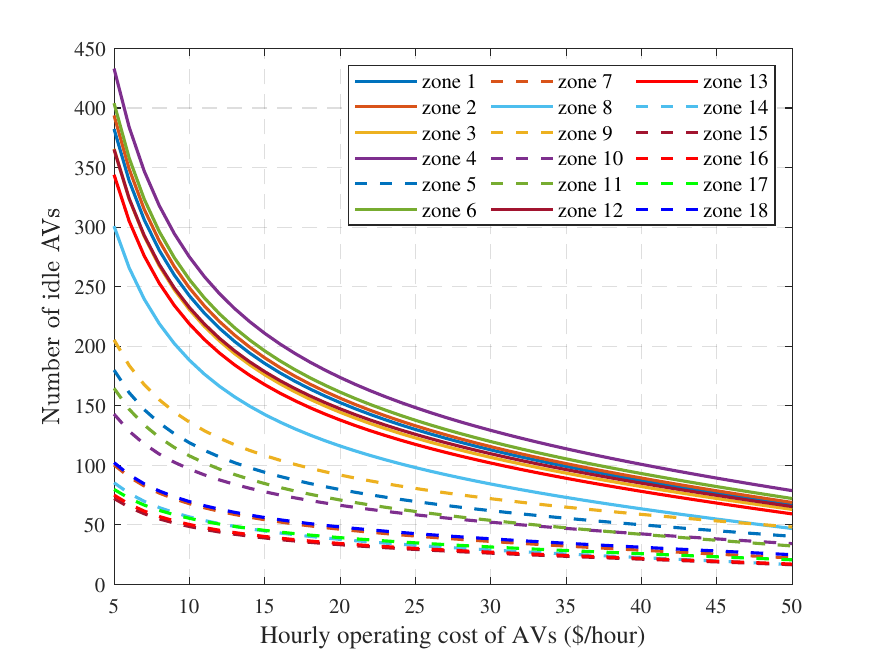}
         \caption{Spatial distribution of idle AVs under $a_2=0.5$.}
         \label{fig:idle_AV_scale_05}
     \end{subfigure}
     \hfill
    \begin{subfigure}[b]{0.32\textwidth}
         \centering
         \includegraphics[width=\textwidth]{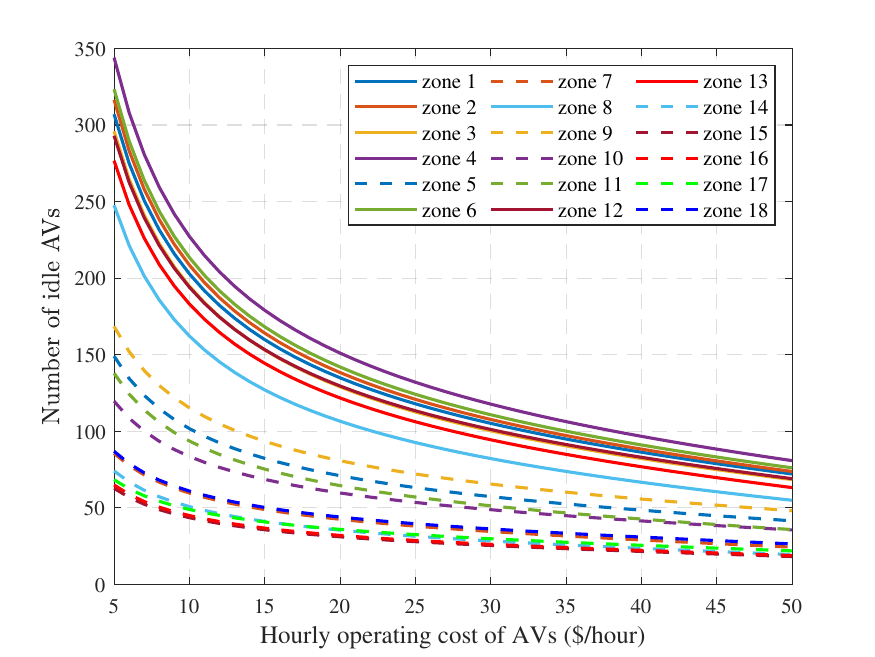}
         \caption{Spatial distribution of idle AVs under $a_2=0.6$.}
         \label{fig:idle_AV_scale_06}
     \end{subfigure}
     \begin{subfigure}[b]{0.32\textwidth}
         \centering
         \includegraphics[width=\textwidth]{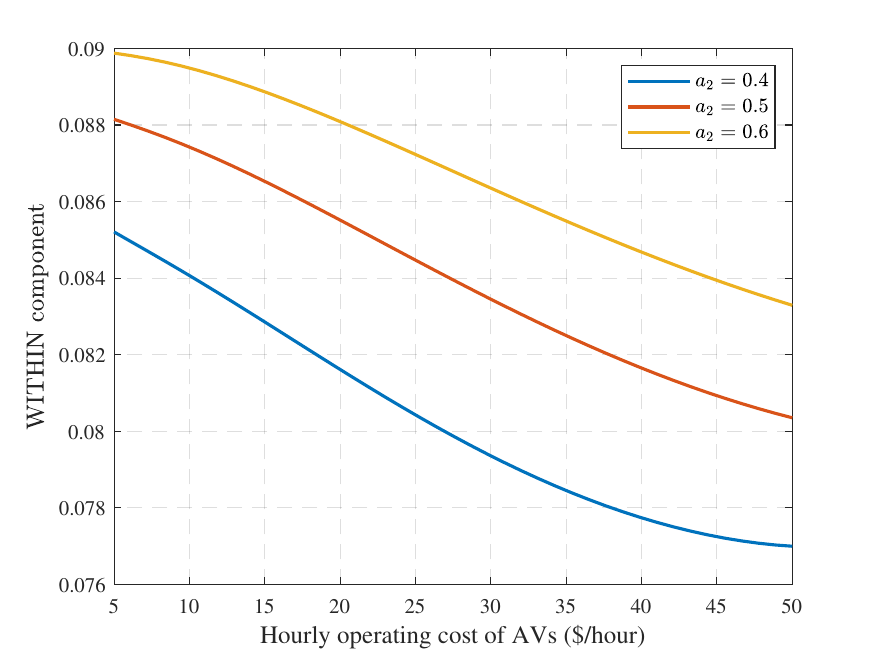}
         \caption{WITHIN component under distinct values of $a_2$.}
         \label{fig:WITHIN_scale}
     \end{subfigure}
     \hfill
     \begin{subfigure}[b]{0.32\textwidth}
     \centering
     \includegraphics[width=\textwidth]{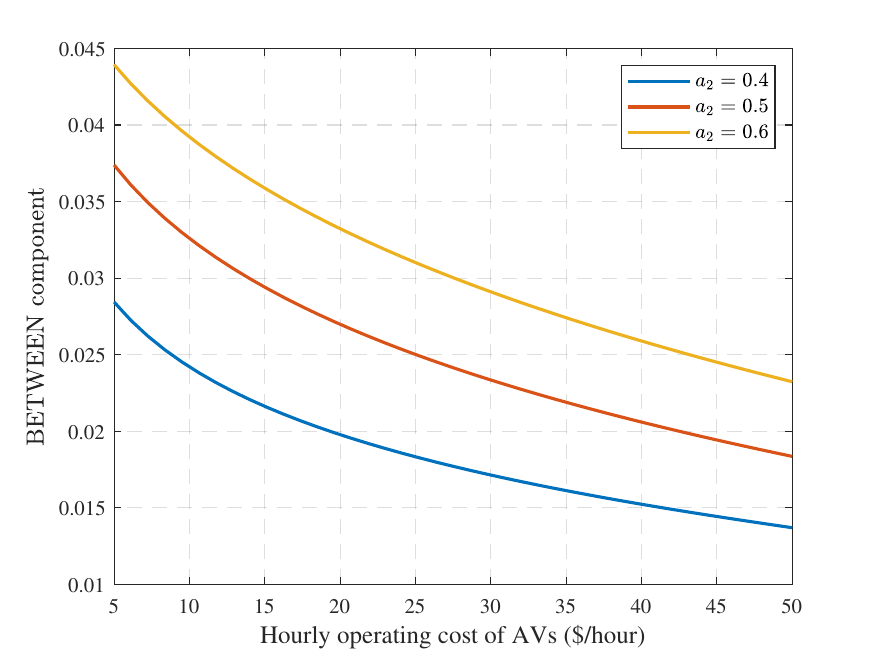}
     \caption{BETWEEN component under distinct values of $a_2$.}
     \label{fig:BETWEEEN_scale}
    \end{subfigure}
    \hfill
    \begin{subfigure}[b]{0.32\textwidth}
         \centering
         \includegraphics[width=\textwidth]{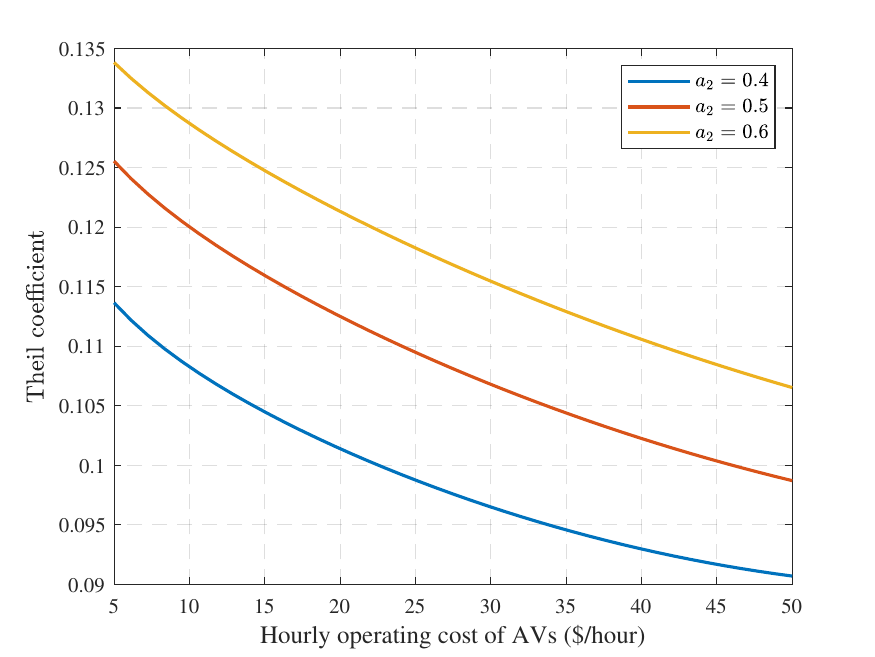}
         \caption{Theil coefficient under distinct values of $a_2$.}
         \label{fig:T_scale}
    \end{subfigure}
    \caption{The spatial distribution of idle vehicles and the equity results under distinct values of $a_2$.}
    \label{fig:economy_scale}
\end{figure}

\end{document}